
\documentclass[reqno, 11pt, a4paper]{amsart}
\usepackage{amsmath,amsfonts,amsthm,amssymb,amscd}
\usepackage{hyperref}
\usepackage[alphabetic]{amsrefs}
\usepackage{tikz}
\usepackage{graphicx}
\usepackage{xcolor}

\linespread{1.25}

\usepackage[hmargin=34mm,vmargin=41.6mm,footskip=5ex]{geometry}

\makeatletter
\def\ps@headings{\ps@empty
  \def\@evenhead{%
    \setTrue{runhead}%
    \normalfont\scriptsize
    \hfil
    \def\thanks{\protect\thanks@warning}%
    \leftmark{}{}\hfil}%
  \def\@oddhead{%
    \setTrue{runhead}%
    \normalfont\scriptsize \hfil
    \def\thanks{\protect\thanks@warning}%
    \rightmark{}{}\hfil}%
  \def\@oddfoot{\normalfont\scriptsize \hfil\thepage\hfil}%
  \let\@evenfoot\@oddfoot
  \let\@mkboth\markboth
}
\makeatother

\pagestyle{headings}

\numberwithin{equation}{section}	

\newtheorem{lem}{Lemma}[section]

\newtheorem{tw}[lem]{Theorem}
\newtheorem{wtw}{Theorem}
\newtheorem{cor}[lem]{Corollary}
\newtheorem{prop}[lem]{Proposition}

\newcommand {\E}{\mathbb{E}_{\Delta}^2}
\newcommand {\trol}[1]{L(#1)}

\newcommand {\ent}[1]{\lfloor#1\rfloor}


\newcommand {\minset}[1]{\mathrm{Min}(#1)}

\newcommand {\disp}[2]{\mathrm{Disp}_{#1}(#2)}
\newcommand {\link}[2]{\mathrm{Lk}(#1 , #2)}

\newtheorem*{claim0}{Claim}
\newtheorem*{claim1}{Claim 1}
\newtheorem*{claim2}{Claim 2}
\newtheorem*{claim3}{Claim 3}
\newtheorem*{claim4}{Claim 4}

\theoremstyle{definition}
\newtheorem{de}[lem]{Definition}

\theoremstyle{remark}
\newtheorem{rem}[lem]{Remark}

\newtheorem{convent}[lem]{Convention}



\begin{document}

\title[Hyperbolic isometries and boundaries]{Hyperbolic isometries and boundaries of systolic complexes}
\author{Tomasz Prytu\l a}
\date{\today}


\address{School of Mathematics, University of Southampton, Southampton SO17 1BJ, UK}
\email{t.p.prytula@soton.ac.uk}

\begin{abstract}

Given a group $G$ acting geometrically on a systolic complex $X$ and a hyperbolic isometry $h \in G$, we study the associated action of $h$ on the systolic boundary $\partial X$. We show that $h$ has a canonical pair of fixed points on the boundary and that it acts trivially on the boundary if and only if it is virtually central. The key tool that we use to study the action of $h$ on $\partial X$ is the notion of \mbox{a $K$--displacement} set of $h$, which generalises the classical minimal displacement set of~$h$. We also prove that systolic complexes equipped with a geometric action of a group are almost extendable.
\end{abstract}

\subjclass[2010]{20F67 (Primary), 20F65, 20F69 (Secondary)}
\keywords{Systolic complex, boundary at infinity, hyperbolic isometry}
\maketitle

\section{Introduction}
\label{sec:intro}

A systolic complex is a simply connected simplicial complex whose vertex links satisfy a certain combinatorial condition called $6$--\emph{largeness}. The condition of $6$--largeness serves as an upper bound for the combinatorial curvature, and thus systolic complexes may be seen as combinatorial analogues of metric spaces of nonpositive curvature, the so-called $\mathrm{CAT}(0)$ spaces. 
Systolic complexes were first introduced in \cite{Ch4} under the name of \emph{bridged complexes}, although their $1$--skeleta had appeared much earlier in metric graph theory (see e.g., \cite{SC}). In this article we are interested in systolic complexes that are equipped with a geometric action of a group. Any such group is called a \emph{systolic group}. 
The theory of systolic complexes and groups, as developed in \cite{JS2}, is to a large extent parallel to the theory of $\mathrm{CAT}(0)$ spaces and groups. In particular, over the last fifteen years many of the nonpositive curvature-like properties of systolic complexes have been established (see \cites{Ch4, JS2, E1, E2, E3, OsaPry} and references therein).
On the other hand, a combinatorial approach led to constructions of examples of systolic groups whose behaviour is very different from the classical nonpositively curved groups \cite{JS2}.

An important invariant of a $\mathrm{CAT}(0)$ space $X$ is its \emph{boundary at infinity} $\partial X$. The boundary is a topological space which, as a set, consists of equivalence classes of geodesic rays in $X$, such that asymptotic rays are equivalent. One topologises it in a way that two geodesic rays are `close' if they fellow travel `long time'. Any $G$--action by isometries on $X$ gives rise to a $G$--action by homeomorphisms on $\partial X$. It turns out that many algebraic properties of a group are reflected in topological properties of the boundary and in the action itself. \smallskip

In this article we study this correspondence in the setting of systolic complexes. The boundary for systolic complexes was constructed in \cite{OP}. The construction is similar to the one for $\mathrm{CAT}(0)$ spaces, however it is much more technical. The points of the systolic boundary are also represented by geodesic rays in (the $1$--skeleton of) a systolic complex $X$, but not every geodesic ray in $X$ gives a point in the boundary: in order to ensure good properties of the boundary, a choice of a certain subclass of geodesics was necessary. This is mainly due to the fact that arbitrary geodesics in a systolic complex do not satisfy any form of the Fellow Traveller Property (indeed, two geodesics of length $D$ with the same endpoints may get $\frac{D}{2}$ apart). In \cite{OP} the authors introduce \emph{good geodesics} and \emph{good geodesic rays}, and define the systolic boundary $\partial X$ as a set of equivalence classes of good geodesic rays in $X$. The topology on $\partial X$ is defined analogously as in the $\mathrm{CAT}(0)$ case. Both good geodesics and good geodesic rays are preserved by simplicial automorphisms of $X$, and therefore any simplicial $G$--action on $X$ induces a $G$--action (by homeomorphisms) on $\partial X$. Intuitively, a good geodesic ray is a geodesic ray which, whenever contained in a flat $F$, follows the $\mathrm{CAT}(0)$ geodesic in $F$. In particular good geodesic rays have the desired metric properties, similar to those of geodesic rays in $\mathrm{CAT}(0)$ spaces.\smallskip 

An isometry (i.e.,\ a simplicial automorphism) $h$ of a systolic complex $X$ is \emph{hyperbolic} if it does not fix any simplex of $X$. Note that if $G$ acts geometrically on $X$ then every infinite order element of $G$ is a hyperbolic isometry of $X$. The main point of this article is to study the associated action of $h$ on the systolic boundary $\partial X$. We start by determining when this action is trivial (i.e., when $h$ acts as the identity on $\partial X)$. Denote by $C_G(h)$ the centraliser of $h$ in $G$. The following is the systolic analogue of a result of K. Ruane for $\mathrm{CAT}(0)$ spaces \cite{Kru}.

\begin{wtw}[Theorem~\ref{tw:virtuallycentral}]\label{tw:twa}Let $G$ be a group acting geometrically on a systolic complex $X$, and let $h \in G$ be a hyperbolic isometry. Then $h$ acts trivially on the boundary $\partial X$ if and only if the centraliser $C_G(h)$ has finite index in $G$.
\end{wtw}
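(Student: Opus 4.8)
My plan is to recast the entire statement in terms of the displacement function $x \mapsto d(x,hx)$ on the vertex set of $X$, using the following equivalence, valid for any geometric action: $\cen{h}$ has finite index in $G$ if and only if $\sup_{x} d(x,hx) < \infty$. The forward implication is immediate: if $\cen{h}$ has finite index it still acts cocompactly, the displacement function is $\cen{h}$--invariant (since $d(cx, hcx) = d(cx, chx) = d(x,hx)$ for $c \in \cen{h}$), and a $\cen{h}$--invariant Lipschitz function on a cocompact complex is bounded. For the converse, if $d(x,hx) \le M$ for all $x$ then every conjugate satisfies $d(v, ghg^{-1}v) = d(g^{-1}v, hg^{-1}v) \le M$ at a fixed basepoint $v$; properness bounds the number of group elements moving $v$ within distance $M$, so $h$ has only finitely many conjugates, i.e.\ $\cen{h}$ has finite index. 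This reduces the theorem to the assertion that $h$ acts trivially on $\partial X$ if and only if its displacement function is bounded.

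For the bounded direction I would argue directly. If $d(x,hx) \le M$ for all $x$ and $\gamma$ is a good geodesic ray representing $\xi \in \partial X$, then $h\gamma$ is again a good geodesic ray (good geodesics are preserved by simplicial automorphisms) and each vertex of $\gamma$ is moved at most $M$ by $h$; hence $\gamma$ and $h\gamma$ fellow travel and are asymptotic, so $h\xi = \xi$. As $\xi$ was arbitrary, $h$ acts trivially on $\partial X$.

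The substance of the theorem is the remaining implication: if the displacement function is unbounded then $h$ moves some boundary point. Here I would use the $K$--displacement sets $\disp{K}{h} = \{x : d(x,hx) \le K\}$, which interpolate between $\minset{h}$ and all of $X$, together with their structural properties established earlier (invariance under $h$ and good--geodesic convexity). Unboundedness means $\disp{K}{h} \neq X$ for every $K$, so there are vertices arbitrarily far from every $\disp{K}{h}$; taking good geodesics from a fixed basepoint to such vertices and passing to a limit should yield a good geodesic ray $\gamma$ along which $d(\gamma(t), h\gamma(t))$ is unbounded. For such $\gamma$ the rays $\gamma$ and $h\gamma$ cannot fellow travel, so by the asymptotic criterion for the systolic boundary they are not asymptotic and $h$ does not fix $\gamma(\infty)$; thus the action is nontrivial.

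The main obstacle is precisely this last step: extracting, from merely global unboundedness of the displacement, a single good geodesic ray whose displacement grows, and then certifying that this ray and its $h$--translate determine distinct boundary points. Both parts rest on the convexity and nesting properties of the sets $\disp{K}{h}$ and on the precise fellow--traveller criterion for good geodesic rays, which here substitutes for the convexity of the displacement function available in the $\mathrm{CAT}(0)$ setting. I expect the careful control of good geodesics under the limiting process to be the most delicate point of the argument.
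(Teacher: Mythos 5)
Your reduction of the theorem to the statement ``$h$ acts trivially on $\partial X$ if and only if the displacement function $x \mapsto d(x,h\cdot x)$ is bounded'' is correct, and your group-theoretic argument for that equivalence is in places more elementary than the paper's: the observation that bounded displacement forces all conjugates $ghg^{-1}$ to move a fixed basepoint a bounded amount, hence (by properness and local finiteness) $h$ has finitely many conjugates and $C_G(h)$ has finite index, avoids the paper's appeal to cocompactness of the $C_G(h)$--action on $\disp{K}{h}$ (Theorem~\ref{tw:centraliser}). The direction ``bounded displacement implies trivial action on $\partial X$'' is also fine and matches the paper.

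The remaining implication --- unbounded displacement implies some boundary point is moved --- is where your proposal has a genuine gap, and it is exactly where the paper works hardest. You propose to take good geodesics $\gamma_n$ from a fixed basepoint $p$ to vertices $x_n$ of larger and larger displacement and pass to a limit ray $\gamma$, hoping that $d(\gamma(i), h\cdot \gamma(i))$ is unbounded in $i$. The limit ray exists (local finiteness plus a diagonal argument, and goodness survives since subgeodesics of good geodesics are good), but it inherits only \emph{initial} segments of the $\gamma_n$: for each $l$ one has $\gamma\big|_{[0,l]} = \gamma_{n_l}\big|_{[0,l]}$, whereas the large displacement of $\gamma_{n_l}$ occurs near its far endpoint $x_{n_l}$, whose distance from $p$ can vastly exceed $l$. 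Nothing forces $\gamma$ to pass near any $x_n$, so its displacement may stay bounded. Worse, the construction cannot be repaired internally: by Corollary~\ref{coro:doublecontracting}, if the action on $\partial X$ were trivial, then \emph{every} good ray from $p$ would have displacement at most $d(p,h\cdot p)+2D+1$ along its entire length, so no argument that merely produces \emph{some} good ray can reach a contradiction --- you must produce good rays from $p$ passing uniformly close to prescribed far-away vertices. That is precisely almost extendability (Theorem~\ref{tw:almostext}), which the paper proves by an entirely different mechanism: a contradiction argument using connectedness at infinity and Osajda's theorem that a noncompact cocompact systolic complex is not $1$--connected at infinity (Theorem~\ref{tw:sysnot1conn}). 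Once that is available, the paper's conclusion is immediate: every vertex lies within $E$ of some good ray issuing from $p$; triviality of the boundary action plus Corollary~\ref{coro:doublecontracting} bounds the displacement along each such ray by $d(p,h\cdot p)+2D+1$, hence the displacement function is globally bounded by $d(p,h\cdot p)+2D+1+2E$, which is your bounded case. Your proposal is missing this ingredient, and the limiting process you expected to be merely ``delicate'' in fact fails to substitute for it.
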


The canonical object used to study the action of $h$ on $X$ is the \emph{minimal displacement set} of $h$, which is a subcomplex of $X$ spanned by all the vertices which are moved by $h$ the minimal (combinatorial) distance. This distance is called the \emph{translation length} of $h$ and it is denoted by $\trol{h}$. Due to a `coarse nature' of $\partial X$, in order to study the action of $h$ on $\partial X$ it is convenient to replace the minimal displacement set by its coarse equivalent -- the $K$--\emph{displacement set} of $h$, for some $K\geqslant \trol{h},$ which is a subcomplex of $X$ spanned by all the vertices that are moved by $h$ the distance at most $K$. The $K$--displacement set has all the desired (from our point of view) features of the minimal displacement set, while it has the advantage of being more flexible as one can let $K$ vary.\medskip 

The proof of Theorem~\ref{tw:twa} is based on the interplay between $K$--displacement sets of $h$ (for different values of $K$) and the centraliser $C_G(h)$. In particular, the `if' direction essentially boils down to showing the following two facts:\smallskip

\begin{enumerate}

\item Any point in $\partial X$ represented by a geodesic ray that lies inside some $K$--dis\-place\-ment set of $h$ is fixed by $h$.\smallskip

\item The centraliser $C_G(h)$ acts cocompactly on any $K$--displacement set of $h$.\smallskip

\end{enumerate}

\noindent
The `only if' direction is more involved. In this case we are given the information about the action on the boundary, and we need to extract the information about the action on the complex. For this we need $X$ to satisfy the following property. We say that $X$ is \emph{almost extendable} if there exists a constant $E \geqslant 0$ such that for every pair of vertices $x,y$ in $X$ there is a good geodesic ray issuing from $x$ and passing within distance $E$ from $y$. The following theorem is also of independent interest.

\begin{wtw}[Theorem~\ref{tw:almostext}]\label{tw:twbkasza}
Let $X$ be a noncompact systolic complex, on which a group $G$ acts geometrically. Then $X$ is almost extendable.
\end{wtw}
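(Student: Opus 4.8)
The plan is to construct, for each pair of vertices $x,y$, a good geodesic ray issuing from $x$ as a limit of good geodesic segments, and to extract the uniform constant $E$ from the cocompactness of the action. Two standing facts are used throughout. First, since $G$ acts geometrically on the noncompact complex $X$, the complex is locally finite and of infinite diameter; fix a vertex $o$ and a radius $R$ so that every vertex lies within distance $R$ of the orbit $G\cdot o$. Second, I rely on the properties of good geodesics from \cite{OP}: good geodesic segments between vertices exist, and the class of good geodesics is closed under limits, in the sense that a sequence of good geodesic segments issuing from a fixed vertex, with lengths tending to infinity, subconverges uniformly on compact sets (using local finiteness) to a good geodesic ray. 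I also use the uniform fellow-traveller property that good geodesics with a common endpoint enjoy, which is the defining advantage of goodness.

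First I would reformulate the goal: it suffices to produce a constant $E$ such that for every pair $x,y$ there is a good geodesic ray from $x$ meeting the ball $B(y,E)$. In this form the statement asserts that long good geodesic segments issuing from $x$ can be made to reach past any prescribed target $y$, uniformly in the pair, and the entire difficulty is concentrated in arranging this reaching and in the uniformity of $E$.

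The crux I would attack by contradiction. If no such $E$ existed, there would be pairs $(x_n,y_n)$ with every good geodesic ray from $x_n$ avoiding $B(y_n,n)$. Translating each pair by a suitable $t_n\in G$ and invoking cocompactness, I may assume $x_n=x_*$ is a fixed vertex for all $n$. A short argument forces $d(x_*,y_n)\to\infty$: if $d(x_*,y_n)$ stayed bounded, then for large $n$ the ball $B(y_n,n)$ would engulf every bounded neighbourhood of $x_*$, leaving no room for any good ray to issue from $x_*$, contradicting their existence. Taking good geodesic segments $\sigma_n=[x_*,y_n]$ of lengths tending to infinity, and passing to a limit, I obtain a good geodesic ray from $x_*$. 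To reach the contradiction I translate the far endpoints into a fixed fundamental set by choosing $g_n\in G$ with $g_n y_n$ in a bounded orbit, and pass to a subsequence; the reversed segments $g_n\sigma_n$ then issue from one fixed vertex and, by the limit property again, subconverge to a good geodesic ray $\rho$. Untranslating by $g_n^{-1}$ manufactures, for all large $n$, a good geodesic ray through $y_n$; combining its initial behaviour with the fellow-traveller property of good geodesics sharing an endpoint produces a good ray from $x_*$ that meets $B(y_n,C)$ for a constant $C$ depending only on $R$ and on the fellow-traveller constant. For $n>C$ this contradicts the choice of $(x_n,y_n)$, and one may take $E=C$.

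The main obstacle, I expect, is the directional reconciliation in the last step: the limit construction most naturally yields a good ray \emph{based at} $y_n$ passing near $x_*$, whereas the conclusion requires a good ray \emph{based at} $x_*$ passing near $y_n$. This is precisely the phenomenon flagged in the introduction—two geodesics with the same endpoints may diverge linearly—so the reconciliation cannot use arbitrary geodesics and must be carried out with good geodesics throughout, transferring closeness from one basepoint to the other via their uniform fellow-traveller property. Cocompactness is indispensable here for two reasons: it normalises the far endpoints into finitely many local configurations, so that the fellow-traveller constants entering the estimate are uniform, and it bounds the size of the "dead-end" regions in which a good segment admits no prolongation, by the diameter of a fundamental domain. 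It is this bound that ultimately feeds into $E$ and makes the constant independent of the pair $(x,y)$.
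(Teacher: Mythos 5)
There is a genuine gap, and it sits exactly where the whole difficulty of the theorem lies. Your normalisation to a fixed basepoint (via cocompactness and Corollary~\ref{coro:doublecontracting}) matches the paper's reduction to its Claim~1 and is fine; the problem is the contradiction step. You produce good geodesic rays as diagonal limits of good segments (the segments $\sigma_n$ from $x_*$ to $y_n$, or the reversed translates $g_n\sigma_n$ issuing from a fixed vertex). But such a limit only controls \emph{initial} behaviour: the extracted ray agrees with $\sigma_{n_k}$ on a prefix whose length $L_k$ comes from the diagonal extraction, and nothing forces $L_k$ to be comparable to $d(x_*,y_{n_k})$. Hence the limit ray based at $x_*$ need not approach any $y_n$, and the untranslated ray $g_{n_k}^{-1}\rho$ is a good ray issuing from $y_{n_k}$ that follows $\sigma_{n_k}$ backwards for only $L_k$ steps, so it need not come anywhere near $x_*$ either. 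The final ``directional reconciliation'' is also unavailable: Theorem~\ref{tw:contracting} and Corollary~\ref{coro:doublecontracting} compare two good geodesics issuing from a \emph{common} vertex, given the positions of their endpoints; they cannot convert a ray based at $y_n$ passing near $x_*$ into a ray based at $x_*$ passing near $y_n$. The initial portion of such a ray, reversed, is merely a good \emph{segment} ending at $y_n$, and prolonging it past $y_n$ to a good ray is precisely the statement being proved. Your claim that cocompactness ``bounds the size of the dead-end regions by the diameter of a fundamental domain'' is unsubstantiated: no step of your argument excludes good segments from $x_*$ admitting no long good prolongation, with unbounded dead-end depth.

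This is not a repairable oversight within your framework, because almost extendability is not a formal consequence of local finiteness, cocompactness, and the listed properties of good geodesics (existence, heredity under subsegments, closure under limits, and fellow travelling from a common basepoint); a global topological input is required. The paper's proof supplies it as follows. Assuming its Claim~1 fails, it shows (Claims~2 and~3) that every good geodesic from the basepoint $p$ to any sufficiently far vertex must miss a definite ball around $g_r\cdot p$; it then uses these good geodesics as cone lines, contracting consecutive cone lines against each other via Theorem~\ref{tw:contracting} and the contractibility of balls, to contract arbitrary far-away loops in the complement of a given compact set. This proves that $X$ would be $1$--connected at infinity, contradicting Osajda's Theorem~\ref{tw:sysnot1conn}, which asserts that a noncompact systolic complex with a geometric group action is never $1$--connected at infinity. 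That ingredient --- the analogue of the cohomology with compact supports used by Ontaneda in the $\mathrm{CAT}(0)$ case --- is what rules out deep dead ends, and it is entirely absent from your proposal.
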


The proof of this theorem relies on the study of topology at infinity of systolic complexes. It is similar to the proof of an analogous theorem for $\mathrm{CAT}(0)$ spaces \cite{Ont}. The main difference is that our proof uses the notion of connectedness at infinity, whereas the one in \cite{Ont} uses cohomology with compact supports. The key fact is that in the setting above, the complex $X$ is not $1$--connected at infinity (see \cite{Osa}).\medskip

In the second part of the article we consider arbitrary hyperbolic isometries of $X$ (not necessarily the virtually central ones). One can still ask whether such an isometry $h$ has any fixed points in $\partial X$. In the setting of $\mathrm{CAT}(0)$ spaces, a hyperbolic isometry $h$ has an \emph{axis}, that is, an $h$--invariant geodesic line, and this axis determines two fixed points of $h$ in $\partial X$. 
In our situation $h$ also has a kind of axis (see \cite{E2}), but unfortunately this axis does not have to determine an $h$--invariant good geodesic. 
 In fact, an $h$--invariant good geodesic may not exist. However, we do prove that $h$ has a pair of fixed points in $\partial X$. \bigskip

\begin{wtw}[Proposition~\ref{prop:fixedpointsconvergence}]\label{tw:twc}
Let $G$ be a group acting geometrically on a systolic complex $X$ and let $h \in G$ be a hyperbolic isometry. Then:

\begin{enumerate}

		\item there exist points $h^{-\infty}$ and $ h^{+\infty}$ in the boundary $\partial X$ which are fixed by $h$,

			\item for any vertex $x \in X$ we have $(h^n\cdot x)_n \to h^{+\infty}$ and $(h^{-n} \cdot x)_n \to h^{-\infty}$ as $n \to \infty$ in the compactification $\overline{X} = X \cup \partial X$.
\end{enumerate}
\end{wtw}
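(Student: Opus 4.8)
The plan is to derive both assertions from a single analytic statement: that each orbit sequence converges in $\ov{X}$ to a point of $\partial X$. Two reductions make the task symmetric and basepoint--independent. First, since $d(h^n\cdot x, h^n\cdot x_0)=d(x,x_0)$ is independent of $n$, the orbits of two vertices $x,x_0$ remain at bounded distance, so it suffices to treat one well-chosen vertex $x_0$; the bounded--distance comparison will then transport the limit to every vertex, which is exactly what part (2) requires. Second, the point $h^{-\infty}$ and the convergence $(h^{-n}\cdot x)_n\to h^{-\infty}$ will follow by applying the whole argument to $h^{-1}$ in place of $h$. Fixedness itself I intend to obtain cheaply: once a good geodesic ray representing the limit is placed inside a $K$--displacement set, property (1) highlighted above in connection with Theorem~\ref{tw:twa} --- that any point of $\partial X$ represented by a good geodesic ray lying in some $\disp{K}{h}$ is fixed by $h$ --- gives the fixed point directly, so the real content is the convergence.

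For the choice of basepoint I would invoke Elsner's axis \cite{E2}: $h$ admits an $h$--invariant combinatorial geodesic line $\alpha\subseteq\minset{h}$. Taking $x_0$ to be a vertex on $\alpha$ forces the exact linear spacing $d(x_0,h^n\cdot x_0)=n\,\trol{h}$, so the orbit escapes to infinity (recall $\trol{h}>0$ as $h$ is hyperbolic) along a genuine geodesic, and consecutive orbit points satisfy the constant--step estimate $d(h^n\cdot x_0,h^{n+1}\cdot x_0)=\trol{h}$. These exact distances are what will let me control Gromov products later, and they are the reason for preferring a point on $\alpha$ over an arbitrary vertex of the displacement set.

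Next I would fix $K\geqslant\trol{h}$ and good geodesics $\sigma_n$ from $x_0$ to $h^n\cdot x_0$ lying inside $\disp{K}{h}$ (the orbit sits in $\minset{h}\subseteq\disp{K}{h}$). As their lengths tend to infinity, the Arzel\`a--Ascoli--type extraction underlying the construction of $\ov{X}$ in \cite{OP} yields a subsequence converging to a good geodesic ray $\rho^+$ that is again contained in $\disp{K}{h}$. Setting $h^{+\infty}:=[\rho^+]\in\partial X$, property (1) above makes $h^{+\infty}$ an $h$--fixed point, establishing part (1). For part (2) I must upgrade subsequential convergence to convergence of the entire sequence: concretely, I would show that $\sigma_n$ fellow travels $\rho^+$ on initial segments whose radius tends to infinity, which by the definition of the topology on $\ov{X}$ means precisely that $h^n\cdot x_0\to h^{+\infty}$. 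Continuity of the $h$--action on $\ov{X}$, applied to $h^{n+1}\cdot x_0=h\cdot(h^n\cdot x_0)$, then reconfirms $h\cdot h^{+\infty}=h^{+\infty}$, and the bounded--distance comparison of the first paragraph extends the limit to every vertex.

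The main obstacle is exactly this last step, and it is where the pathology of systolic geodesics must be confronted: the axis $\alpha$ is generally \emph{not} a good geodesic, so the boundary point cannot simply be read off from it, and arbitrary geodesics violate the Fellow Traveller Property, so the $\sigma_n$ for different $n$ cannot be compared naively. I would circumvent this by working exclusively with the fellow traveller property enjoyed by \emph{good} geodesics \cite{OP}, feeding in the exact spacing $d(x_0,h^n\cdot x_0)=n\,\trol{h}$ to keep the relevant Gromov products $(h^n\cdot x_0\mid h^m\cdot x_0)_{x_0}$ growing linearly, and using the cocompactness of the $C_G(h)$--action on $\disp{K}{h}$ (property (2) above) to make the fellow--traveling constants uniform along the whole orbit rather than merely along the extracted subsequence. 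Assembling these ingredients to force the tracking radius to infinity is the delicate part; once it is in place, the two reductions complete both statements.
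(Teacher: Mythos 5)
Your high-level strategy (produce a good geodesic ray inside a uniform displacement set, define $h^{+\infty}$ as its class, get fixedness from the displacement bound, and get convergence from the bounded-distance criterion for the topology on $\ov{X}$) is the same as the paper's, but three of your load-bearing steps are either false or are precisely the hard content you cannot assume. First, your basepoint choice rests on an ``$h$--invariant combinatorial geodesic line $\alpha\subseteq\minset{h}$''. No such line exists in general: \cite{E2} only provides a geodesic whose $h$--translate is Hausdorff $1$--close to it, and the paper explicitly cites \cite[Example 1.2]{E2} for a hyperbolic isometry with \emph{no} invariant geodesic at all. Consequently your ``exact linear spacing'' $d(x_0,h^n\cdot x_0)=n\,\trol{h}$ is also unjustified; the paper even remarks in the proof of Theorem~\ref{tw:fixedpoints} that $d(h^{-n}\cdot x, h^n\cdot x)$ need not equal $2n\cdot\trol{h}$. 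Second, your parenthetical justification that the good geodesics $\sigma_n$ lie in $\disp{K}{h}$ ``because the orbit sits in $\minset{h}$'' is a non sequitur: endpoints lying in the minimal displacement set does not force the geodesic between them to have uniformly bounded displacement. Establishing exactly this is the paper's Lemma~\ref{lem:euclideanisclosetomin} (Euclidean geodesics between vertices of $\minset{h}$ lie in $\disp{9\trol{h}+6}{h}$), and its proof needs the Fellow Traveller Property of \emph{directed} geodesics plus the characteristic-surface machinery; without it, your extraction of $\rho^+$ inside a displacement set, and hence your fixed points, have no foundation.

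Third, the step you yourself flag as ``the delicate part'' — upgrading subsequential fellow-traveling to convergence of the whole orbit — is where the paper's case dichotomy does real work, and the tool you propose is the wrong one. Cocompactness of the $C_G(h)$--action on $\disp{K}{h}$ (Theorem~\ref{tw:centraliser}) holds always, but it does not make the $h$--orbit track the ray: in a flat, $\disp{K}{h}$ can be all of $\E$ with $C_G(h)\cong\mathbb{Z}^2$ acting cocompactly, while rays in $\E$ point in directions unrelated to the $h$--orbit. What the paper actually uses is \emph{$h$}--cocompactness of $\disp{K}{h}$, which gives $\disp{K}{h}\subset B_R(\gamma,X)$ and hence a uniform bound $d(h^n\cdot x, v_n)\leqslant M+R$ for points $v_n$ on the ray; and when $h$--cocompactness fails (exactly the flat case), the paper instead builds an honestly $h$--invariant good geodesic inside a flat via the Flat Torus Theorem and Lemmas~\ref{lem:cat0inflatisgood} and~\ref{lem:goodinflatisgood}. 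Your proposal has no substitute for this dichotomy, so part (2) remains unproved even granting the earlier gaps.
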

The second statement shows that $h^{+\infty}$ and $h^{-\infty}$ are in a certain sense the canonical fixed points of $h$. To find $h^{+\infty}$ and $h^{-\infty}$ we construct an `almost $h$--invariant' good geodesic in $X$, by which we mean a geodesic that is contained in some $K$--displacement set of $h$. This requires analysing the construction of good geodesics.  We go through the steps of the construction and show that given any two vertices $x$ and $y$ in the minimal displacement set of $h$, a good geodesic between $x$ and $y$ is contained in a $K$--displacement set where $K$ is independent of distance between $x$ and $y$. Then we construct a bi-infinite good geodesic as a limit of finite good geodesics contained in the $K$--displacement set.

In order to prove the second part of Theorem~\ref{tw:twc} we also study good geodesics contained in the flats of $X$. In particular, we give a simple criterion for a geodesic contained in a flat to be a good geodesic and we show that any geodesic that is good in the flat is also good in the complex $X$.\smallskip

We believe that the results presented in this article may be used in the further study of systolic groups via their boundaries. Theorems~\ref{tw:twa} and \ref{tw:twc} are the first steps in analysing the dynamics of the action of $h$ on $\partial X$, which in the $\mathrm{CAT}(0)$ setting plays the key role in e.g., \cite{PaSw}, where the topology of $\partial X$ is related to splittings of $G$ over $2$--ended subgroups. Theorem~\ref{tw:twbkasza} seems to be of a more general nature; its $\mathrm{CAT}(0)$ counterpart has been widely used in the study of $\mathrm{CAT}(0)$ groups and boundaries.

\subsubsection*{Organisation}
The article consists of an introductory Section~\ref{sec:preliminaries}, where we give background on systolic complexes and boundaries, and of the two main parts. In the first part, which occupies Sections \ref{sec:almostext} and \ref{sec:trivialact}, we prove Theorem~\ref{tw:twbkasza} and after establishing basic facts about $K$--displacement sets we give a proof of Theorem~\ref{tw:twa}. In the second part (Sections~\ref{sec:constructgood} and \ref{sec:fixedpoints}) we first sketch the construction of good geodesics, and then we prove Theorem~\ref{tw:twc}.

\subsubsection*{Acknowledgements}I would like to thank Piotr Przytycki for suggesting the problem and for many helpful discussions. I would also like to thank Damian Osajda for helpful discussions and comments, in particular leading to the proof of Theorem~\ref{tw:twbkasza}. Finally, I thank Dieter Degrijse and Damian Osajda for a careful proofreading of the manuscript and help in improving the exposition.

The author was supported by the Danish National Research Foundation through the Centre for Symmetry and Deformation (DNRF92) and by the EPSRC First Grant EP/N033787/1.

\section{Systolic complexes and their boundaries}\label{sec:preliminaries}

In this section we give some background on systolic complexes and their boundaries. We also fix the terminology and notation that is used throughout the article.
\subsection{Systolic simplicial complexes.}

Let $X$ be a simplicial complex. We assume that $X$ is finite dimensional and uniformly locally finite, i.e., there is a uniform bound on the degree of vertices in $X$. We equip $X$ with the CW--topology, and always treat it as a topological space (we do not make a distinction between an abstract simplicial complex and its geometric realisation). Let $X^{(k)}$ denote the $k$--skeleton of $X$. In particular $X^{(0)}$ is the vertex set of $X$. For any subset $A \subset X^{(0)}$, a subcomplex \emph{spanned} by $A$ is the largest subcomplex of $X$ that has $A$ as its vertex set. We denote this subcomplex by $\mathrm{span}(A)$. A map $f \colon X \to Y$ of simplicial complexes is \emph{simplicial} if $f(X^{(0)}) \subset Y^{(0)}$ and whenever vertices $x_0, x_1, \ldots, x_n$ span a simplex of $X$ then their images $f(x_0), f(x_1), \ldots,  f(x_n)$ span a simplex of $Y$. Note that a simplicial map is continuous, and in particular a simplicial automorphism is a homeomorphism of $X$.\medskip

In this article we will be particularly interested in metric aspects of simplicial complexes.

\begin{de}
We endow the vertex set $X^{(0)}$ with a metric, where the distance $d(x,y)$ between vertices $x$ and $y$ is defined to be the combinatorial distance in the $1$--skeleton, i.e., the minimal number of edges of an edge--path joining $x$ and $y$. 

For two subcomplexes $A,B \subset X$, we define the distance $d(A,B)$ to be the minimal distance between vertices $a \in A$ and $b \in B$.\end{de} 

Whenever we refer to the \emph{metric} on $X$ we mean the metric on $X^{(0)}$ defined above. Consequently, a \emph{geodesic} in $X$ is a sequence of vertices $(v_0, v_1, \ldots,  v_n)$ such that for any $0 \leqslant i, j \leqslant n$ we have $d(v_i, v_j) =|j-i|$. Analogously we define a geodesic that is infinite in one or both ends. In the first case we call it a \emph{geodesic ray}, in the second case: a \emph{geodesic line} or a \emph{bi-infinite geodesic}. Observe that a simplicial map is $1$--Lipschitz and any simplicial automorphism is an isometry of $X$.\medskip

We now briefly recall the notions needed to define systolic complexes. We say that $X$ is \emph{flag} if every set of vertices of $X$ pairwise connected by edges spans a simplex of $X$. A flag simplicial complex is completely determined by its $1$--skeleton $X^{(1)}$ or, equivalently, by its vertex set $X^{(0)}$ and the metric $d$ defined above. For a vertex $v \in X$ the \emph{link} of $v$ is a subcomplex $\link{v}{X}$ of $X$, that consists of all the simplices of $X$ that do not contain $v$, but together with $v$ span a simplex of $X$. A \emph{cycle} in $X$ is the image of a simplicial map $ f \colon S^1 \to X$ from the triangulation of the $1$--sphere to $X$. A cycle is \emph{embedded} if $f$ is injective. Let $\gamma$ be an embedded cycle. The \emph{length} of $\gamma$, denoted by $|\gamma|$ is the number of edges of $\gamma$. A \emph{diagonal} of $\gamma$ is an edge in $X$ that connects two nonconsecutive vertices of $\gamma$.\smallskip

We are ready to define systolic complexes. Our main reference for the theory of systolic complexes is \cite{JS2}.

\begin{de}Given a natural number $k \geqslant 6$, a simplicial complex $X$ is $k$--\emph{large} if every embedded cycle $\gamma $ in $ X$ with  $4 \leqslant |\gamma| < k$ has a diagonal. We say that $X$ is $\infty$--\emph{large} if it is $k$--large for every $k \geqslant 6$. \end{de}

\begin{de}A simplicial complex $X$ is $k$--\emph{systolic} if it is simply connected and if for every vertex $v \in X$ the link $\link{v}{X}$ is flag and $k$--large. If $k=6$ then we abbreviate $6$--\emph{systolic} to \emph{systolic}.
\end{de}

A $k$--systolic complex is flag and $k$--large \cite[Proposition 1.4 and Fact 1.2(4)]{JS2}. Note that if $k \leqslant m$ then `$m$--systolic' implies `$k$--systolic'. In this article we will be interested in the (most general) case of $k=6$. This case is of particular importance in the theory, as for $k \geqslant 7$ one shows that $k$--systolic complexes are $\delta$--hyperbolic \cite[Theorem 2.1]{JS2}.\medskip

The condition of $k$--largeness, when applied to the link of a vertex $ v \in X$, serves as a~kind of upper bound for the curvature around $v$. In particular complexes with $6$--large links are called complexes of \emph{simplicial nonpositive curvature} (SNPC). Consequently, systolic complexes can be thought of as simplicial analogues of $\mathrm{CAT}(0)$ metric spaces.

\begin{de}Let $v \in X$ be a vertex and let $n$ be a positive integer. Define the \emph{ball of radius $n$ centred at} $v$ by $ B_n(v,X) = \mathrm{span}\{x \in X^{(0)} \mid d(x,v) \leqslant n\} \subset X.$ Define the \emph{sphere of radius $n$ centred at} $v$ by $ S_n(v,X) = \mathrm{span}\{x \in X^{(0)} \mid d(x,v) = n\}.$ For a subcomplex $A \subset X$ define the \emph{ball of radius $n$ around} $A$ by \[B_n(A,X) = \bigcup_{v \in A^{(0)}} B_n(v,X).\] We also refer to $B_n(A,X)$ as an $n$--\emph{neighbourhood of} $A$ in $X$.
\end{de}

A subcomplex $A \subset X$ is \emph{convex} if for every two vertices $x,y \in A$ any geodesic between $x$ and $y$ in $X$ is contained in $A$. Note that since geodesics in $X$ are not necessarily unique, a subcomplex $A \subset X$ can be isometrically embedded and not convex.

\begin{prop}\label{prop:ballcontractible}Let $X$ be a systolic complex. Then the following hold:
\begin{enumerate}
\item \label{itemlabel:ballscontract} For any convex subcomplex $A \subset X$ the ball $B_n(A,X)$ is convex and contractible \cite[Corollary 7.5]{JS2}. In particular for any vertex $v \in X$ the ball $B_n(v,X)$ is convex and contractible.

	\item The complex $X$ is contractible \cite[Theorem 4.1(1)]{JS2}.
\end{enumerate}
\end{prop}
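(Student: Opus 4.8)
The plan is to treat the two statements together, since the global contractibility of $X$ will follow formally once the balls are known to be contractible. The heart of the matter is therefore part~(1), and within it the convexity assertion, from which contractibility can be extracted by a combinatorial collapsing argument. I would first isolate the single-vertex case $A = \{v\}$ (which is all that part~(2) requires) and only afterwards bootstrap to an arbitrary convex $A$, so as to avoid circularly invoking the contractibility of $A$ itself.

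For convexity I would use the standard local-to-global principle for systolic complexes: a connected \emph{full} subcomplex $Q \subset X$ (one containing every simplex of $X$ all of whose vertices lie in $Q$) is convex as soon as it is \emph{$3$-convex}, meaning that for every geodesic $(x,z,y)$ of length $2$ with $x,y \in Q$ one has $z \in Q$. The proof of this principle is a minimal disk diagram argument: if some geodesic joining two vertices of $Q$ left $Q$, one fills the loop formed by that geodesic and a path inside $Q$ by a minimal area simplicial disk and applies the combinatorial Gauss--Bonnet lemma; the $6$-largeness of the links forces either an interior vertex or a boundary corner contradicting minimality or $3$-convexity. Granting this principle, the task reduces to checking that $B_n(A,X)$ is full (immediate, as it is spanned by a vertex set) and $3$-convex.

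I would prove $3$-convexity of $B_n(A,X)$ by induction on $n$, with base case $B_0(A,X) = A$ convex by hypothesis. For the inductive step take $x,y \in B_n(A,X)$ with a common neighbour $z$ and $d(x,y) = 2$, and show $d(z,A) \leqslant n$. The key input is a combinatorial \emph{projection} statement: for a convex set and a vertex just outside a ball around it, the neighbours of that vertex lying closer to the set span a simplex of $X$. This is exactly where $6$-largeness enters -- it rules out the embedded cycles without diagonals that would otherwise appear in the relevant link -- and it lets me compare $d(x,A)$, $d(y,A)$ and $d(z,A)$ inside the link $\link{z}{X}$ and conclude $z \in B_n(A,X)$. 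This projection lemma is the step I expect to be the main obstacle, since it is the only genuinely curvature-theoretic ingredient and must be set up with care.

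With convexity in hand, contractibility of $B_n(A,X)$ follows from a deformation retraction onto $B_{n-1}(A,X)$: the same projection simplices serve as the descending links of the function ``distance to $A$'', and as each is a contractible simplex, retraction across one sphere is a homotopy equivalence. Iterating retracts $B_n(A,X)$ onto $A$, which is contractible once the single-vertex instance of part~(2) is established; treating $A=\{v\}$ first makes this noncircular. For part~(2) I would then write $X = \bigcup_n B_n(v,X)$ as an increasing union of contractible subcomplexes. Since reduced homology commutes with this direct limit, $\widetilde{H}_*(X) = \varinjlim \widetilde{H}_*(B_n(v,X)) = 0$, and because $X$ is simply connected by the definition of a systolic complex, the Hurewicz and Whitehead theorems give that $X$ is contractible. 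A general convex $A$ then inherits contractibility because a convex subcomplex of a systolic complex is again systolic, hence contractible by part~(2), closing the loop.
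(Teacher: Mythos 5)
The paper itself contains no proof of this proposition: both parts are imported as citations, part~(1) from \cite[Corollary 7.5]{JS2} and part~(2) from \cite[Theorem 4.1(1)]{JS2}. So the only meaningful comparison is with that source, and your outline is in essence a reconstruction of its argument: the local-to-global convexity criterion (a connected, full, $3$--convex subcomplex of a systolic complex is convex), the projection lemma, the resulting $3$--convexity of balls (if $d(z,A)=n+1$, then its neighbours $x,y \in B_n(A,X)$ must lie at distance exactly $n$ from $A$, hence in the projection simplex of $z$, hence be adjacent, contradicting $d(x,y)=2$), the deformation retraction of $B_n(A,X)$ onto $B_{n-1}(A,X)$ along projection simplices, and contractibility of $X$ as an increasing union of contractible balls via Hurewicz and Whitehead. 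You correctly identify the projection lemma as the single step that carries the curvature hypothesis; neither it nor the local-to-global criterion is actually proved in your outline, but both are true and correctly placed in the logical order. One definitional caveat: the paper defines $B_n(A,X)$ as the union $\bigcup_{v\in A^{(0)}}B_n(v,X)$, not as the span of the vertices at distance at most $n$ from $A$, so the fullness you call ``immediate'' is not: that the union is full (equivalently, coincides with the span) for convex $A$ is itself a consequence of the simplex version of the projection lemma.

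The one step that is not justified as written is the last one: ``a convex subcomplex of a systolic complex is again systolic, hence contractible by part~(2).'' Fullness does give that the links of $A$ are flag and $6$--large, but systolicity also demands that $A$ be simply connected, and that is not free --- its standard proof runs through the very retraction machinery you are in the middle of building, so invoking it here is circular in spirit. The repair is already contained in your own construction: the deformation retractions $B_n(A,X)\to B_{n-1}(A,X)$ compose to retractions $B_n(A,X)\to A$ that agree on smaller balls, and since $X=\bigcup_n B_n(A,X)$ carries the weak topology, these glue to a retraction $X\to A$. A retract of a contractible space is contractible, so $A$, and with it $B_n(A,X)\simeq A$, is contractible, with no appeal to systolicity of $A$. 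With that substitution your outline is a correct, if schematic, reconstruction of the proof in \cite{JS2}.
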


We finish this section with some terminology regarding group actions on simplicial complexes. Let $G$ be a (discrete) group acting on a simplicial complex $X$. We assume that $G$ acts via simplicial automorphisms. We say that the action is:\begin{itemize}
\item \emph{proper} if for every vertex $v \in X$ the stabiliser $G_v$ is finite,
\item \emph{cocompact} if there exists a compact subset $K \subset X$ such that $G \cdot K =X$,
\item \emph{geometric} if it is proper and cocompact.
\end{itemize}
A group is called \emph{systolic} if it acts geometrically on a systolic complex.

\subsection{Boundaries of systolic complexes.}\label{subsec:boundaries}
Given a (noncompact) systolic complex $X$ one can define the \emph{boundary at infinity} (or shortly the \emph{boundary}) $\partial X$ of $X$. Analogously to the cases of $\delta$--hyperbolic and $\mathrm{CAT}(0)$ spaces, the boundary for systolic complexes is given by a set of equivalence classes of geodesic rays, such that asymptotic rays are equivalent. 
In this section we give the definition of the boundary and briefly discuss its key features that are needed in this article. For more details we refer the reader to \cite{OP}.

The main difference from $\delta$--hyperbolic and $\mathrm{CAT}(0)$ cases is that, instead of arbitrary geodesic rays, to define the boundary one uses a canonically defined subcollection of the so-called \emph{good geodesic rays}. To define good geodesic rays one first defines \emph{good geodesics}. The actual definition of good geodesics (which is quite involved) is needed only in Section~\ref{sec:fixedpoints}, and therefore we give this definition in Section~\ref{sec:constructgood}. 

In order to follow the arguments in Sections~\ref{sec:almostext} and \ref{sec:trivialact} it is enough to know that a good geodesic is a certain geodesic in $X$, and that the subclass of good geodesics has the following properties: 

\begin{enumerate} 
\item for any two vertices there exists a (not necessarily unique) good geodesic joining these vertices,
\item any subgeodesic of a good geodesic is a good geodesic,
\item \label{item:ggeoaregeometric} any simplicial automorphism of $X$ maps good geodesics to good geodesics.
\end{enumerate}

A \emph{good geodesic ray} is a geodesic ray, such that any of its finite subgeodesics is a good geodesic. By~(\ref{item:ggeoaregeometric}) any simplicial automorphism of $X$ maps good geodesic rays to good geodesic rays.

 Let $\mathcal{R}$ denote the set of all good geodesic rays in $X$. For a vertex $O \in X$ let $\mathcal{R}_{O}$ denote the set of all good geodesic rays starting at $O$.

\begin{de}\label{def:boundary}Let $X$ be a systolic complex. Define the \emph{boundary of} $X$ to be the set $\partial X = \mathcal{R} / \sim$ where for rays $\eta=(v_0, v_1, \ldots)$ and $\xi=(w_0, w_1, \ldots)$ we have $\eta \sim \xi$ if and only if there exists $K \geqslant 0$, such that for every $i \geqslant 0$ we have $d(v_i, w_i) \leqslant K$.

Define the \emph{boundary of $X$ with respect to the basepoint} $O$ to be the set $\partial_OX	=\mathcal{R}_O / \sim $, where $\sim$ is the same equivalence relation as above. In both cases let $[\eta]$ denote the equivalence class of $\eta$.
\end{de}
  
For any vertex $O \in X$ there is a bijection $\partial X \to \partial_OX$ \cite[Corollary 3.10]{OP}. In particular this means that for every geodesic ray $\eta \subset X$ and for every vertex $O \in X$ there is a ray $\xi \subset X$ starting at $O$ such that $[\eta]= [\xi]$ in $\partial X$. This fact will be used many times in this article. The set $\overline{X}= X \cup \partial_OX$ can be equipped with a topology that extends the standard topology on $X$, and turns $\overline{X}$ into a compact topological space \cite[Propositions 4.4 and 5.3]{OP}. For any two vertices $O, O' \in X$ there is a homeomorphism between $X \cup \partial_OX$ and $X \cup \partial_{O'}X$ \cite[Lemma 5.5]{OP}. Any simplicial action of a group on $X$ extends to an action by homeomorphisms on $\overline{X}$ \cite[Theorem A(4)]{OP}. 

In this article we will mostly be concerned with the induced action on the boundary, not on the entire $\overline{X}$. Moreover, we will be interested in the action on the boundary seen as a set, not as a topological space. For this, we can use a slightly simpler definition. 

\begin{de}\label{def:actionboundary}
Suppose that a group $G$ acts simplicially on $X$. We define an action of $G$ on the set $\partial X$ as follows. Let $[\eta] \in \partial X$ where $ \eta=(v_0, v_1, \ldots )$. Then define $g \cdot [\eta] =[g\cdot \eta] $ where $ g \cdot \eta = (g \cdot v_0, g \cdot v_1, \ldots)$. It is straightforward to check that this is well defined and it defines an action of $G$ on $\partial X$.
\end{de}

One can also verify, that via the bijection $\partial X \to \partial_OX$ the action described above agrees with the action on $\partial_OX$ defined in \cite{OP}.\medskip

We conclude this section with certain metric properties of good geodesics. The following is a crucial property, which can be seen as a coarse version of $\mathrm{CAT}(0)$ inequality for good geodesics.

\begin{tw}\cite[Corollary 3.4]{OP}
\label{tw:contracting}
Let $(v_0, v_1, v_2, \ldots, v_n)$ and $(w_0, w_1, w_2,\allowbreak \ldots ,\allowbreak w_m)$ be good geodesics in a systolic complex $X$ such that $v_0=w_0$. Then for any $0 \leqslant c \leqslant 1$ we have
\begin{equation*} d(v_{\ent{cn}},w_{\ent{cm}} ) \leqslant c \cdot d(v_n, w_m) +D,
\end{equation*}
where $D$ is a universal constant. 
\end{tw}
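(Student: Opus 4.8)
The plan is to read this as the coarse, systolic counterpart of the $\mathrm{CAT}(0)$ \emph{cone inequality}: the statement that, for two geodesics issuing from a common point $O$, the distance between corresponding points scales linearly with the fraction of the way out one has travelled. In a $\mathrm{CAT}(0)$ space one proves this by taking the comparison triangle of $O v_n w_m$ in $\mathbb{E}^2$, where the similar--triangles identity gives $|\bar v_{\lfloor cn\rfloor}\,\bar w_{\lfloor cm\rfloor}| = c\,|\bar v_n \bar w_m|$ exactly, and then invoking the comparison inequality. A systolic complex offers no such exact model — geodesics are not unique, there is no Euclidean comparison, and the complex contains flats — so I would instead isolate the contraction at a single scale and propagate it, allowing a universal additive error $D$ to absorb the discrepancy between the systolic and Euclidean pictures.

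Concretely, I would first establish the midpoint case
\[ d\bigl(v_{\lfloor n/2\rfloor},\, w_{\lfloor m/2\rfloor}\bigr) \;\leq\; \tfrac12\,d(v_n,w_m) + D_0, \]
for a universal $D_0$, using the convexity and contractibility of balls centred at $O=v_0$ (Proposition~\ref{prop:ballcontractible}) to confine the two half--geodesics and to bound, sphere by sphere, the rate at which they may separate as the radius grows to $\lfloor n/2\rfloor$. Iterating this on the nested subgeodesics $(v_0,\dots,v_{\lfloor n/2^{i}\rfloor})$ and $(w_0,\dots,w_{\lfloor m/2^{i}\rfloor})$ — which remain good by the subgeodesic property — yields the inequality at every dyadic ratio $c=2^{-j}$ with the accumulated error summing to the convergent series $D_0\sum_i 2^{-i}\leq 2D_0$, hence with a single universal constant. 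To reach an arbitrary $c\in[0,1]$ I would run the exact similar--triangles argument inside a flat: by the defining feature of good geodesics (a good geodesic follows the $\mathrm{CAT}(0)$ geodesic in any flat containing it), the cone identity holds on the nose for the portions of the two geodesics lying in a common flat, and the systolic structure lets one patch the remaining pieces at the cost of a bounded additive term.

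I expect the main obstacle to be exactly the passage to a \emph{universal} $D$, valid simultaneously for all $c$, $n$ and $m$. The difficulty is structural rather than cosmetic: as recalled in the introduction, two ordinary geodesics of length $D$ with the same endpoints can get $\tfrac{D}{2}$ apart, so convexity of balls alone can never give a linear contraction — the entire content lies in the word \emph{good}. Consequently the midpoint step cannot be proved with the tools available in this section, and its proof will have to enter the detailed combinatorial construction of good geodesics deferred to Section~\ref{sec:constructgood}; the delicate point is to show that the two halves peel apart with a slope governed by the eventual separation $d(v_n,w_m)$ rather than by the lengths $n,m$, which is precisely the quantitative manifestation of goodness.
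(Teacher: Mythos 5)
There is a genuine gap, and in fact two. First, the central step of your scheme --- the midpoint inequality $d(v_{\lfloor n/2\rfloor}, w_{\lfloor m/2\rfloor}) \leqslant \tfrac12 d(v_n,w_m)+D_0$ --- is never proved: you correctly observe that convexity and contractibility of balls cannot yield it (arbitrary geodesics sharing endpoints can diverge linearly), and you then defer it to ``the detailed combinatorial construction of good geodesics.'' But that deferral concedes the whole theorem, since the midpoint case is exactly as hard as the general statement; acknowledging where the content must come from is not the same as supplying it. Second, even granting the midpoint step, your iteration only produces the inequality for $c=2^{-j}$. To reach a general ratio (say $c=3/4$) one would need to apply a contraction estimate to the terminal halves of the two geodesics, which do \emph{not} share a starting vertex, so the hypothesis of the statement (and of your midpoint lemma) fails there. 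Your proposed repair --- running the exact cone identity ``for the portions of the two geodesics lying in a common flat'' --- is unsound: two good geodesics issuing from a common vertex need not have any portions lying in a common flat, and the property you invoke (``a good geodesic follows the $\mathrm{CAT}(0)$ geodesic in any flat containing it'') is only the heuristic from the introduction, not the definition. The actual definition (Definition~\ref{def:goodgeo}) says a good geodesic stays uniformly $C$--close to the \emph{Euclidean geodesic} between any two of its vertices, and the comparison objects there are characteristic disks --- minimal surfaces spanned by specific cycles --- not flats of $X$.

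For comparison, the paper does not prove this statement by direct geometry at all: it is quoted from \cite{OP}*{Corollary 3.4}, and the derivation recorded in the Remark at the end of Section~\ref{sec:constructgood} is purely formal. By Definition~\ref{def:goodgeo} each of your geodesics lies within $C$ of the Euclidean geodesic joining its endpoints; by Proposition~\ref{prop:euclideanisgeo} those Euclidean geodesics can be threaded by actual vertex geodesics; and Theorem~\ref{tw:euclideancontracting} (\cite{OP}*{Theorem C}) gives the contraction inequality for those threaded geodesics, with all of the hard work (the $\mathrm{CAT}(0)$ comparison inside modified characteristic disks, valid for every $c$ simultaneously) done at that stage. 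The triangle inequality then yields the statement with $D=3C$, with no bisection, no iteration, and no appeal to flats. The lesson is that the ``comparison flat'' you were reaching for does exist in the theory, but it is the characteristic disk attached to a pair of endpoints, not a flat of $X$ containing the geodesics.
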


This leads to the following corollary, which will be very useful to us.

\begin{cor}
\label{coro:doublecontracting}
Let $ \eta = (v_0, v_1, v_2, \ldots)$ and $\xi =(w_0, w_1, w_2, \ldots)$ be good geodesic rays in a systolic complex $X$. If $[\eta]=[\xi]$ in $\partial X$ then for every $i \geqslant 0$ we have
\begin{equation*}
d(v_i, w_i) \leqslant d(v_0, w_0) +2D+1,
\end{equation*}
where $D$ is the constant appearing in Theorem~\ref{tw:contracting}.
\end{cor}

\begin{proof}Since $[\eta]=[\xi]$, there is a constant $K \geqslant 0$ such that for all $i$ we have $d(v_i, w_i) \leqslant K$. Fix $i\geqslant 0$, pick $n> K$ and let $ z=(z_0=v_0, z_1, z_2, \ldots , w_{ni})$ be a good geodesic joining $v_0$ and $w_{ni}$. By Theorem~\ref{tw:contracting} applied to $(v_0, v_1, v_2, \ldots, v_{ni})$ and $z$ we have 
\begin{equation}\label{eq:doublecontract} d(v_i, z_i) \leqslant \frac{1}{n}d(v_{ni}, w_{ni}) +D \leqslant \frac{K}{n} +D \leqslant 1+ D.
\end{equation}

Applying Theorem~\ref{tw:contracting} to $z$ and $(w_0, w_1, w_2, \ldots, w_{ni})$ (with the direction reversed) we obtain that $d(z_i, w_i) \leqslant d(v_0, w_0) +D$. This, together with \eqref{eq:doublecontract} and the triangle inequality gives the claim. \end{proof}

\section{Almost extendability of systolic complexes}\label{sec:almostext}

In this section we study a property of  metric spaces called the \emph{almost extendability}. This property can be defined for arbitrary geodesic metric spaces. The definition we present is adjusted to the setting of systolic complexes.

\begin{de}\label{def:almostext}
A systolic complex $X$ is \emph{almost extendable}, if there exists a constant $E \geqslant 0$ such that for any two vertices $x$ and $y$ of $X$, there is a good geodesic ray starting at $y$ and passing within distance $E$ from $x$.
\end{de}

 It is easy to construct systolic complexes (in fact, trees) that are not almost extendable. For example, let $T$ denote the half-line $\mathbb{R}_+$ with the interval of length $n$ attached to every integer $n \in \mathbb{R}_+$. The standard triangulation turns $T$ into a systolic complex in which every combinatorial geodesic is a good geodesic. One can easily see that $T$ is not almost extendable. When we equip a systolic complex with a geometric action of a group then the situation changes.

\begin{tw}
\label{tw:almostext}Let $X$ be a noncompact systolic complex, on which a group $G$ acts geometrically. Then $X$ is almost extendable.
\end{tw}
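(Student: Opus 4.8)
The plan is to prove almost extendability by exploiting the failure of the complex to be simply connected at infinity, following the strategy of the $\mathrm{CAT}(0)$ analogue but replacing cohomological arguments with connectedness-at-infinity arguments.

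First I would fix a basepoint and a vertex $x$, and consider good geodesic rays issuing from a chosen vertex $y$. The idea is to argue by contradiction: if $X$ were not almost extendable, then for every constant $E$ one could find a pair of vertices $x, y$ such that no good geodesic ray from $y$ passes within $E$ of $x$. Using cocompactness of the $G$-action, I would translate the pair $(x,y)$ into a fixed compact fundamental domain, so that (after passing to a subsequence and using uniform local finiteness) one obtains, in effect, a single vertex $y$ and a sequence of vertices $x_k$ with $d(y, x_k) \to \infty$, such that every good geodesic ray from $y$ avoids an $E_k$-neighbourhood of $x_k$ with $E_k \to \infty$.

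Next I would relate the avoidance of $x$ by good geodesic rays to a separation property of spheres. Consider the sphere $S_n(y, X)$ for $n = d(y,x)$. Good geodesic rays from $y$ pass through vertices on each sphere $S_n(y,X)$, and by cocompactness plus properness the number of $G$-orbits of such configurations is finite. The key geometric input is that $X$ is \emph{not} $1$-connected at infinity (the cited fact from \cite{Osa}): this should be leveraged to show that the union of good geodesic rays from $y$ forms a set that is, at each radius, suitably dense on the sphere. More precisely, I would show that the ``shadow'' on $S_n(y,X)$ of the set of good geodesic rays cannot miss a ball of radius $E$ around any vertex of $S_n(y,X)$ once $n$ is large, because a persistent such gap would produce a way to fill loops at infinity, contradicting non-$1$-connectedness at infinity. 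Here I would use Proposition~\ref{prop:ballcontractible} (contractibility of balls and of $X$) together with the contracting property of good geodesics from Theorem~\ref{tw:contracting} and Corollary~\ref{coro:doublecontracting} to control how good geodesic rays spread out.

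The main obstacle I expect is the passage between the combinatorial-dynamical statement (good geodesic rays from $y$ reach near $x$) and the topological statement about connectedness at infinity; this is exactly where the $\mathrm{CAT}(0)$ proof uses compactly supported cohomology and where the substitution of a connectedness-at-infinity argument requires care. Concretely, one must show that a bounded gap in the shadows on spheres persists to infinity and assembles into an essential loop at infinity that cannot be capped off, yet the relevant filling \emph{is} forced by the structure of systolic balls -- producing the contradiction. I would isolate this as the central lemma and prove it by a limiting argument: extract from the avoided balls $B_{E_k}(x_k, X)$ a nested family whose union is an unbounded region not accessible by good geodesic rays, then show, using the local finiteness of $X$ and a diagonal argument over spheres, that this region gives a nontrivial class at infinity, contradicting that $X$ is not $1$-connected at infinity in the precise sense of \cite{Osa}. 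The remaining steps -- uniformity of the constant $E$ across all pairs $(x,y)$ via cocompactness, and verifying that the limiting ray one extracts is genuinely a good geodesic ray (using that good geodesics are closed under taking subgeodesics and a compactness/diagonal extraction on each sphere) -- are then routine.
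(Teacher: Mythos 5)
Your overall frame (argue by contradiction, reduce to a single basepoint via cocompactness, and play the failure of almost extendability against Osajda's theorem that $X$ is not $1$-connected at infinity) matches the paper's strategy, but the core of your argument is both left vague and, where you do make it precise, logically inverted. The contradiction must run: assuming almost extendability fails, one shows that $X$ \emph{is} $1$-connected at infinity, contradicting Theorem~\ref{tw:sysnot1conn}. Your middle paragraph states this direction, but your ``central lemma'' reverses it: you propose to extract an unbounded region inaccessible to good geodesic rays and show that it ``gives a nontrivial class at infinity, contradicting that $X$ is not $1$-connected at infinity.'' A nontrivial class at infinity cannot contradict non-$1$-connectedness at infinity --- it is exactly what non-$1$-connectedness asserts exists --- so as stated the central lemma yields no contradiction at all. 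Moreover, the actual filling mechanism is never supplied. In the paper it is this: after translating by $g_r^{-1}$, the failure of extendability produces a far-away vertex $q=g_r^{-1}\cdot p$ such that \emph{every} good geodesic from $q$ to any vertex outside a large ball $L$ around $q$ misses the ball $B_r(p,X)$; any simplicial loop in $X\setminus L$ is then coned off to $q$ along these good geodesics, and the cone is filled using Theorem~\ref{tw:contracting} (consecutive cone lines ending at adjacent vertices stay $(D+1)$--close) together with contractibility of balls (Proposition~\ref{prop:ballcontractible}), so that the entire filling lies in the $(D+2)$--neighbourhood of the cone lines and hence misses $K$. That is how a ``gap'' forces fillability of loops at infinity; nothing in your proposal plays the role of the cone point $q$.

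There is a second, independent gap: your proposal works exclusively with good geodesic \emph{rays}, whereas the coning construction needs \emph{finite} good geodesics, and the negation of almost extendability only constrains rays. The paper bridges this (its Claims 2 and 3): if no good geodesic from $p$ ending in $B_r(g_r\cdot p,X)$ extends to an infinite good geodesic ray, then by a compactness/diagonal argument the possible extension lengths are uniformly bounded, whence every good geodesic from $p$ to any vertex beyond some radius $r'$ misses $B_r(g_r\cdot p,X)$ entirely. Without this step, a finite good geodesic could still enter the avoided region, and the cone lines in the filling would not avoid the compact set $K$. This bridge, together with the corrected direction of the contradiction, is what is missing from your outline.
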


The analogous theorem is true in the $\mathrm{CAT}(0)$ setting \cite[Theorem B]{Ont}, and  it is an exercise in the setting of $\delta$--hyperbolic groups (see \cite{Ont}). Our proof is similar to the one for $\mathrm{CAT}(0)$ spaces, however, it can be seen as more direct. The main difference is that instead of cohomology with compact supports, our proof uses the notion of connectedness at infinity. We begin by recalling this notion.

\begin{de}Let $Y$ be a topological space and let $n \geqslant -1$ be an integer. We say that $Y$ is \emph{$n$--connected at infinity} if for every $ -1 \leqslant k \leqslant n$ the following condition holds: for every compact set $K \subset Y$ there exists a compact set $L \subset Y$ such that $K \subset L$ and every map $S^k = \partial B^{k+1} \to Y \setminus L$ extends to a map $B^{k+1} \to Y \setminus K$. 

For $k=-1$ we define $S^{-1}= \emptyset$ and $B^0=\{\ast\}$. In particular $Y$ is $(-1)$--connected at infinity if and only if it is not compact.
\end{de}

Note that if $Y$ is a simplicial complex then (in view of the Simplicial Approximation Theorem) in the above definition it is enough to consider only simplicial maps.\medskip

The following theorem of D. Osajda is the crucial ingredient in the proof of Theorem~\ref{tw:almostext}.

\begin{tw}\cite[Theorem 3.2]{Osa}
\label{tw:sysnot1conn} Let $X$ be a noncompact systolic complex, on which a group $G$ acts geometrically. Then $X$ is not $1$--connected at infinity. 
\end{tw}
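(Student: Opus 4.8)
The plan is to prove the statement directly, by exhibiting a value of $k \in \{-1,0,1\}$ for which the defining condition of $n$--connectedness at infinity fails. Since $X$ is noncompact it is automatically $(-1)$--connected at infinity, so the failure must be located at $k=0$ or $k=1$. First I would dispose of the case where $X$ has more than one end. As $G$ acts geometrically on the (geodesic, contractible) complex $X$, the complex $X$ is quasi-isometric to $G$, so its number of ends lies in $\{1,2,\infty\}$ (the value $0$ is excluded because $X$ is noncompact). If this number is $2$ or $\infty$, then taking $K=\{O\}$ and any compact $L$, one can pick two vertices in distinct unbounded components of $X \setminus L$; the resulting map $S^0 \to X\setminus L$ does not extend to a map $B^1 \to X\setminus K$, so $X$ is not $0$--connected at infinity and we are done. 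It therefore remains to handle the one--ended case, where I must show that $X$ is \emph{not} simply connected at infinity, i.e.\ that the $k=1$ condition fails.

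For the one--ended case I would work with the exhaustion of $X$ by the convex contractible balls $B_n=B_n(O,X)$ and study the inverse system of exteriors $Y_n=\overline{X \setminus B_n}$. Recall that if $X$ were $1$--connected at infinity then, being also connected at infinity, a pro--Hurewicz argument would force it to be $1$--acyclic at infinity; hence it suffices to produce nontrivial homology at infinity in degree $1$, that is, a basepoint ball $B_r$ such that for arbitrarily large $R$ some $1$--cycle in $Y_R$ has nonzero image in $\widetilde H_1(Y_r)$. To compute this homology I would apply Mayer--Vietoris to the decomposition $X = B_{n+1} \cup Y_{n-1}$, whose intersection retracts onto the sphere $S_n(O,X)$ via the combinatorial geodesic projection onto $B_n$. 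Because $B_{n+1}$ and $X$ are both contractible, the sequence collapses to isomorphisms
\[
\widetilde H_{k}\big(S_n(O,X)\big) \;\cong\; \widetilde H_{k}(Y_{n-1}), \qquad k \geqslant 1,
\]
so the homology of the exteriors is carried by the homology of the spheres, and the bonding maps of the inverse system $\{\widetilde H_1(Y_n)\}$ are induced by the projections $B_m \to B_n$ restricted to spheres.

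The heart of the argument, and the step I expect to be the main obstacle, is to show that this inverse system of sphere homologies does \emph{not} become pro--trivial in degree $1$. Here I would use the systolic hypothesis essentially: the $6$--largeness of links together with the convexity of balls gives rigid control of the spheres $S_n(O,X)$ and of the projections between them, and in particular systolic complexes are asymptotically hereditarily aspherical, so that a filling disk for a loop lying far out is necessarily dragged back toward the centre. Concretely, given $B_r$ I would, for each large $R$, produce an embedded loop supported in $S_R(O,X)$ — using a pair of divergent good geodesic rays furnished by one--endedness and controlled by the contracting estimate of Theorem~\ref{tw:contracting} — and argue through the projection structure that its homology class survives under $\widetilde H_1(Y_R) \to \widetilde H_1(Y_r)$.

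Finally, cocompactness is what guarantees that the spheres are ``uniformly nontrivial'' at every scale: it forbids the inverse system from eventually stabilising to zero, which is precisely what distinguishes the present situation from, say, a higher--dimensional Euclidean space. Assembling these, the inverse system $\{\widetilde H_1(Y_n)\}$ is not pro--trivial, so $X$ is not $1$--acyclic at infinity, and hence not $1$--connected at infinity. I anticipate that the genuinely delicate point is the non--triviality of the bonding maps, which is where the asymptotic asphericity of systolic complexes must be invoked in full; the ends reduction and the Mayer--Vietoris computation are, by contrast, routine.
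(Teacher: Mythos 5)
You should first be aware that the paper does not prove this statement at all: Theorem~\ref{tw:sysnot1conn} is imported verbatim as \cite[Theorem 3.2]{Osa} and used as a black box in the proof of Theorem~\ref{tw:almostext}. So the benchmark is Osajda's paper, and your proposal is an attempt to reprove a substantial external theorem from scratch. Your reductions are reasonable in outline, with two fixable slips. In the many-ended case, $K=\{O\}$ does not work: a single vertex separates nothing, and a path between two far-away points can simply avoid it; you must take $K$ to be a compact set (say a large ball) whose complement has at least two unbounded components, which exists by the definition of having $\geqslant 2$ ends. In the Mayer--Vietoris step, the claimed retraction of the annular intersection onto the sphere ``via the combinatorial geodesic projection'' is both unjustified and unnecessary --- that projection sends vertices to simplices, points inward, and is not a continuous retraction. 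The correct decomposition is combinatorial: let $Y_n$ be the full subcomplex spanned by vertices at distance $\geqslant n+1$ from $O$; since the vertex-distances of a simplex take at most two consecutive values, one has $X=B_{n+1}(O,X)\cup Y_n$ with $B_{n+1}(O,X)\cap Y_n = S_{n+1}(O,X)$ exactly, fullness gives a deformation retraction of $X\setminus B_n(O,X)$ onto $Y_n$, and contractibility of balls and of $X$ (Proposition~\ref{prop:ballcontractible}) then yields $\widetilde H_1(Y_n)\cong \widetilde H_1(S_{n+1}(O,X))$.

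The genuine gap is that the entire systolic content of the theorem is concentrated in your final step --- non-pro-triviality of the inverse system $\{\widetilde H_1(Y_n)\}$ --- and there you have no argument. You assert that the bonding maps are ``induced by the projections restricted to spheres'' (no such continuous maps between spheres are defined, and identifying the inclusion-induced maps with anything projection-like under the Mayer--Vietoris isomorphisms is itself a nontrivial task), and you invoke asymptotic hereditary asphericity to claim that a filling disk of a far-away loop is ``dragged back toward the centre''. But AHA is a statement about \emph{asphericity}, i.e.\ about filling spherical cycles of dimension $\geqslant 2$ in a uniform neighbourhood of themselves; by itself it yields no obstruction to filling a $1$--cycle far from the basepoint, and converting that kind of filling-radius control into the nonexistence of far fillings of loops is precisely the hard work done in \cite{Osa}, not a routine corollary. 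Your appeal to cocompactness is likewise not an argument: the standard cubulation of $\mathbb{E}^3$ carries a geometric $\mathbb{Z}^3$--action and \emph{is} simply connected at infinity, so cocompactness alone cannot forbid the system from stabilising to zero --- the systolic hypothesis must enter concretely, and your sketch never makes it act (you do not even show that $\widetilde H_1(S_n(O,X))\neq 0$ for a single $n$, nor that your proposed loop built from two good geodesic rays and Theorem~\ref{tw:contracting} is homologically essential anywhere). As it stands, the proposal reduces the theorem to its own crux and stops; within the present paper the correct ``proof'' is the citation.
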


\begin{proof}[Proof of Theorem~\ref{tw:almostext}.]
First we show that it is enough to prove the following claim.

\begin{claim1} Let $p$ be a fixed vertex. Then there exists a constant $E'$ such that, for any $g \in G$ there is a good geodesic ray starting at $p$ and passing within $E'$ from $g \cdot p$.
\end{claim1}

Indeed, let $x$ and $y$ be arbitrary vertices of $X$. By cocompactness there exists $R > 0$ and elements $g_1, g_2 \in G$ such that we have $d(g_1 \cdot p, x) \leqslant R$ and $d(g_2 \cdot p, y) \leqslant R$. By Claim 1 there exists a good geodesic ray $\eta$ starting at $p$ and passing within $E'$ from $g_1^{-1}g_2 \cdot p$. Then the ray $g_1 \cdot \eta$ starts at $g_1 \cdot p$ and passes within $E'$ from $g_2 \cdot p$, and hence it passes within $E'+R$ from $y$. Now let $\xi$ be a good geodesic ray starting at $x$ and such that $[\xi]=[g_1 \cdot \eta]$.  Write $g_1 \cdot \eta = (g_1 \cdot p=v_0, v_1, \ldots)$ and $\xi=( x=w_0, w_1, \ldots)$. Then
by Corollary~\ref{coro:doublecontracting} for every $i \geqslant 0$ we have \[d(v_i, w_i) \leqslant d(g_1 \cdot p, x) + 2D +1 \leqslant R+2D+1.\] Since $g_1 \cdot \eta$ passes within $E' +R$ from $y$, we have that $\xi$ passes within $E'+R+R+2D+1$ from $y$. Therefore Claim 1 implies the theorem (with constant $E = E'+R+R+2D+1$).\medskip

The rest of the proof is devoted to proving Claim 1. We need a little preparation. In what follows, for a good geodesic or a good geodesic ray $\eta$ we will denote its vertices by $\eta(i)$, for $i \geqslant 0$, i.e.,\ $\eta=(\eta(0), \eta(1), \eta(2), \ldots)$. In other words, the geodesic $\eta$ may be seen as a map $\mathbb{N} \to X$. We still treat $\eta$ as a subset of $X$; the above notation is introduced only to simplify the exposition. 

For a good geodesic $\eta$ let $l_{\eta}$ denote the supremum of natural numbers $l$, such that $\eta$ can be extended to a good geodesic on the interval $[0, l] = \{0,1, \ldots, l\} \subset \mathbb{N}$. Note that $l_{\eta}$ does not have to be attained, in that case we write $l_{\eta} = \infty$. Observe that if $l_{\eta} < \infty$ then there is an extension of $\eta$ to the interval $[0, l_{\eta}]$. If $l_{\eta}= \infty$ then by the fact that $X \cup \partial X$ is compact, there is an extension of $\eta$ to the interval $[0, \infty)$, i.e.,\ the geodesic $\eta$ can be extended to a good geodesic ray. For vertices $x, y \in X$ let $[\![ x,y ]\!]$ denote a good geodesic between these two vertices. Note that such a geodesic in general is not unique.

Now we begin the proof of Claim 1. We proceed by contradiction. Assume that Claim 1 does not hold, then we have the following:

($\ast$) For every $r >0$ there exists $g_r \in G$ such that for every vertex $x \in B_r(g_r \cdot p, X)$ we have $l_{[\![p,x]\!]} < \infty$ for every good geodesic $[\![p,x]\!]$.

\begin{claim2} For every $r>0$ we have \[\mathrm{sup} \{   l_{[\![p,x]\!]} \mid   [\![p,x]\!] \text{ where } x \in B_r(g_r \cdot p, X)\} < \infty.\]

(The supremum is taken over all possible good geodesics that start at $p$ and end at a vertex of $B_r(g_r \cdot p, X)$.)
\end{claim2}

To prove Claim 2, assume conversely that there exists a sequence of good geodesics $([\![p, x_i]\!])_i$ with $x_i \in B_r(g_r \cdot p, X)$, such that $l_{[\![p, x_i]\!]} \rightarrow \infty$ as $n \rightarrow \infty$.
Let $\eta_i$ denote a good geodesic extending $[\![p, x_i]\!]$ to the interval $[0, l_{[\![p, x_i]\!]}]$ (we choose one for each $i$). 
Using a diagonal argument, out of the sequence $(\eta_i)_i$ one constructs an infinite geodesic ray $\xi$ that issues from $p$, and such that for any interval $[0,l]$ we have $\xi\big|_{[0,l]} = \eta_i$ for some $i=i_l$ (cf.\ \cite[Proposition 5.3]{OP}). In particular, the ray $\xi$ intersects the ball $B_r(g_r \cdot p, X)$, which contradicts ($\ast$).

\begin{claim3} For every $r>0$ there exists $r'>r$ such that for every vertex $y \in X \setminus B_{r'}(p, X)$, every good geodesic $[\![p, y]\!]$ misses the ball $B_r(g_r \cdot p, X)$, i.e.,\ we have \[ [\![p, y]\!] \cap B_r(g_r \cdot p, X)= \emptyset. \]
\end{claim3}

Note that we have $p \notin B_{r}(g_r \cdot p, X)$, for otherwise we would get a contradiction with ($\ast$) as there always is a geodesic ray issuing from $p$ (since $X$ is noncompact). Let $ r'=\mathrm{sup} \{   l_{[\![p,x]\!]} \mid   [\![p,x]\!] \text{ where } x \in B_r(g_r \cdot p, X)\}.$ Then the claim follows from the definition of $r'$.

\begin{claim4} The complex $X$ is $1$--connected at infinity.
\end{claim4}
First observe that since $X$ is noncompact, it is $(-1)$--connected at infinity. Let $ K \subset X$ be a compact subset. Take $M>0$ such that $K \subset B_M(p, X)$ and consider the ball $B_{M+D+2}(p, X)$, where $D$ is the constant appearing in Theorem~\ref{tw:contracting} (the reason why we need to pass to the larger ball will become clear later on). 

Pick $r> M+D+2$. By Claim 3  (after `translating its statement by $g_r^{-1}$') there exists $r'>r$ such that every good geodesic joining a vertex $y \in X \setminus B_{r'}(g_r^{-1}\cdot p, X) $ with $g_r^{-1} \cdot p$, misses the ball $B_r(p,X)$. Set $L =B_{r'}(g_r^{-1}\cdot p, X)$. By construction we have $K \subset L$, and $g_r^{-1} \cdot p \in L \setminus K$. 

Suppose $f \colon S^0 \to X\setminus L$ is a simplicial map. Let $v_1$ and $v_2$ be the two vertices in the image of $f$. For $i \in \{0,1\}$ let $\eta_i$ be a good geodesic joining $g_r^{-1} \cdot p$ with $v_i$. Both $\eta_i$ miss the ball $B_r(p,X)$ (and hence they miss $K$) and therefore their union defines a map $F \colon B^1 \to X \setminus K$ that extends $f$. This shows that $X$ is $0$--connected at infinity.

Now let $f \colon S^1 \to X \setminus L$ be a simplicial map. Let $(v_0, v_1, \ldots, v_n, v_{n+1}= v_0)$ be the vertices of the image of $f$ appearing in this order, i.e.,\ for all $i$ vertices $v_i$ and $v_{i+1}$ are adjacent. For every $i \in \{ 0,\ldots, n \}$ let $\eta_i$ be a good geodesic joining $g_r^{-1} \cdot p$ and $v_i$. Observe that no $\eta_i$ intersects the ball $B_r(p,X)$. We will use $\eta_i$'s to construct the required extension of $f$ to the disk $B^2$. 

For any $i$ consider the cycle $\alpha_i \subset X$ which is the union \[\alpha_i= \eta_i \cup \eta_{i+1} \cup [v_i,v_{i+1}].\] We will show that $\alpha_i$ can be contracted to a point in its $(D+2)$--neighbourhood. First note that either $\eta_i$ and $\eta_{i+1}$ have the same length, or their lengths differ by $1$. 

In the first case put $k= d(g_r^{-1}\cdot p, v_i)= d(g_r^{-1}\cdot p, v_{i+1})$. Since $\eta_i$ and $\eta_{i+1}$ start at the same vertex and end at vertices that are adjacent, it follows from Theorem~\ref{tw:contracting} that for any $j \in \{0, 1, \ldots, k\}$ we have \[d(\eta_i(j), \eta_{i+1}(j)) \leqslant D+1.\] 
For any $j \in \{0, \ldots, k\}$ let $\beta^i_j$ be a geodesic between $\eta_i(j)$ and $\eta_{i+1}(j)$ (note that $\beta^i_0 $ is the vertex $ g_r^{-1} \cdot p$ and $\beta^i_k$ is the edge $[v_i, v_{i+1}]$). Now for every $j \in \{0, \ldots, k-1\}$ consider a cycle $\gamma^i_j$ defined as \[\gamma^i_j = \beta^i_j \cup [\eta_{i+1}(j), \eta_{i+1}(j+1)] \cup \beta^i_{j+1} \cup [\eta_{i}(j), \eta_{i}(j+1)].\] By construction $\gamma^i_j$ is contained in the ball $B_{D+2}(\eta_{i}(j), X)$ and therefore it can be contracted inside $B_{D+2}(\eta_{i}(j), X)$, as balls in $X$ are contractible (see Proposition~\ref{prop:ballcontractible}.(\ref{itemlabel:ballscontract})). These contractions of $\gamma^i_j$ for all $j \in \{0, \ldots, k\}$ form a contraction of $\alpha_i$ inside the ball $B_{D+2}(\eta_i,X)$ around the geodesic $\eta_i$. (Formally, by a \emph{contraction} we mean a simplicial map from a simplicial disk $f \colon B^2 \to B_{D+2}(\eta_i,X)$ such that $f$ maps the boundary $\partial B^2$ isomorphically onto $\alpha_i$.)

In the second case, assume that $\eta_{i+1}$ is longer than $\eta_i$, i.e., we have $d(g_r^{-1}\cdot p, v_i)=k$ and $d(g_r^{-1} \cdot p, v_{i+1})=k+1$. In this case the concatenation $\eta_{i} \cup [v_i,v_{i+1}]$ is a geodesic. Then it follows from \cite[Lemma 7.7]{JS2} that $\eta_{i+1}(k)$ and $v_{i}$ are adjacent, and therefore $\eta_{i+1}(k), v_i$ and $v_{i+1}$ span a $2$--simplex. Now $\eta_{i}$ and $\eta_{i+1}\big|_{[0,k]}$ are of the same length and their endpoints $v_i$ and $\eta_{i+1}(k)$ are adjacent. Proceeding as in the first case we obtain a contraction of the cycle \[\eta_i \cup \eta_{i+1}{\big|}_{[0,k]} \cup [v_i, \eta_{i+1}(k)]\] inside the ball $B_{D+2}(\eta_i,X)$. Adding the $2$--simplex $[v_i, \eta_{i+1}(k), v_{i+1}]$ we obtain the desired contraction of $\alpha_i= \eta_i \cup \eta_{i+1} \cup [v_i,v_{i+1}]$.\medskip

Finally, contractions of $\alpha_i$ for all $i\in \{0, \ldots, n\}$ form the contraction of $(v_0, \ldots v_n, v_0)$ that is performed in the $(D+2)$--neighbourhood of the union of all $\eta_i$'s. Since every $\eta_i$ misses the ball $B_r(p,X)$, the $(D+2)$--neighbourhood of $\eta_i$ misses the ball $B_M(p, X)$ as $r >M+D+2$, and hence it misses $K$ as $K \subset B_M(p,X)$. We conclude that the constructed contraction of $(v_0, \ldots, v_n, v_0)$ defines the extension of $f$ that misses $K$. This finishes the proof of Claim 4. \medskip

This gives a contradiction with Theorem~\ref{tw:sysnot1conn} and hence proves Claim 1.
\end{proof}

\section{Isometries acting trivially on the boundary}\label{sec:trivialact}


In this section, given a group $G$ acting geometrically on a systolic complex $X$, we investigate which elements of $G$ act trivially on the boundary $\partial X$. Before proving the main theorem which characterises such elements in terms of their centralisers in $G$, we introduce the terminology and briefly discuss the tools needed in the proof.

\subsection{Hyperbolic isometries and their $K$--displacement sets.}
\label{subsec:minsets}

Let $h$ be an isometry (i.e.,\ a simplicial automorphism) of a systolic complex $X$. We say that $h$ is \emph{hyperbolic} if it does not fix any simplex of $X$. If $h$ is hyperbolic, then any of its powers is hyperbolic as well (\cite{E2}). To such $h$ one associates the \emph{displacement function} $d_h\colon X^{(0)} \to \mathbb{N}$ defined as $d_h(x)= d(x,h\cdot x)$. The minimum of $d_h$ (which is always attained) is called the \emph{translation length} of $h$ and is denoted by $\trol{h}$.

\begin{de}
\label{def:minset}Let $h$ be a hyperbolic isometry of a systolic complex $X$. The \emph{minimal displacement set} $\minset{h}$ is the subcomplex of $X$ spanned by all the vertices of $X$ which are moved by $h$ the minimal distance, i.e.:
\[\minset{h} = \mathrm{span}\{x \in X^{(0)} \mid d(x, h \cdot x) = \trol{h}\}.\]
More generally, for a natural number $K \geqslant \trol{h}$ define the \emph{$K$--displacement set} as
\[\disp{K}{h} = \mathrm{span}\{x \in X^{(0)} \mid d(x, h \cdot x) \leqslant K\}.\]
Clearly we have $\disp{K}{h} \subset \disp{K'}{h}$ for $K \leqslant K'$ and $\disp{\trol{h}}{h}=\minset{h}$. 
\end{de}
Let us mention that $\minset{h}$ is a systolic complex on its own, and its inclusion into $X$ is an isometric embedding (\cite{E2}). We do not know whether the same is true for $\disp{K}{h}$ for $K > \trol{h}$. In this article we are interested only in the coarse-geometric behaviour of $\disp{K}{h}$.

Observe that if $x \in X$ is a vertex such that $d(x, \disp{K}{h}) \leqslant C$ for some $C \geqslant 0 $, then by the triangle inequality we have $d(x, h \cdot x) \leqslant{K+2C}$. This means that $B_C(\disp{K}{h},X) \subseteq \disp{K+2C}{h}.$ In the presence of a geometric action of a group, the (partial) converse also holds.

\begin{lem}
\label{lem:disp} Let $G$ be a group acting geometrically on a systolic complex $X$ and suppose that $h \in G$ is a hyperbolic isometry. Pick $K\geqslant \trol{h}$. Then there exists $C >0$ such that for all $K' \leqslant K$ we have $\disp{K}{h} \subset B_C(\disp{K'}{h}, X)$. 
\end{lem}

The lemma is an easy consequence of the following theorem.

\begin{tw}
\label{tw:centraliser}
Let $G$ act geometrically on a systolic complex $X$, and let $h \in G$ be a hyperbolic isometry. Then for any $K \geqslant \trol{h}$ the centraliser $C_G(h) \subset G$ acts geometrically on the subcomplex $\disp{K}{h} \subset X$.
\end{tw}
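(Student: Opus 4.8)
The plan is to verify the two defining properties of a geometric action: properness and cocompactness (that $C_G(h)$ preserves $\disp{K}{h}$ is the first thing to check, and is immediate). Since the systolic geometry enters only through uniform local finiteness and the nonemptiness of $\disp{K}{h}$, the argument is essentially a general fact about proper cocompact simplicial actions.

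First I would record the easy parts. If $g \in C_G(h)$ and $x$ is a vertex with $d(x,h\cdot x)\le K$, then because $g$ is an isometry commuting with $h$,
\[
d(g\cdot x, h\cdot g\cdot x)=d(g\cdot x, g\cdot h\cdot x)=d(x,h\cdot x)\le K,
\]
so $g$ permutes the spanning vertex set of $\disp{K}{h}$ and, being simplicial, preserves $\disp{K}{h}$. Properness is inherited from the $G$--action, as the stabiliser of any vertex $v$ in $C_G(h)$ is a subgroup of the finite group $G_v$. The nonemptiness of $\disp{K}{h}$ follows from $\minset{h}\neq\emptyset$ together with $K\ge\trol{h}$.

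The heart of the proof is cocompactness, which I would establish by a conjugacy counting argument. By cocompactness the $G$--action has finitely many vertex orbits; fix a finite set $F$ of representatives, so $G\cdot F=X^{(0)}$. For $f\in F$ the set $\Gamma_f=\{\gamma\in G : d(f,\gamma\cdot f)\le K\}$ is finite, since $B_K(f,X)$ has finitely many vertices (uniform local finiteness) and each is hit by at most $|G_f|<\infty$ group elements (properness); put $\Gamma=\bigcup_{f\in F}\Gamma_f$. Now given a vertex $v\in\disp{K}{h}$, write $v=g\cdot f$ with $f\in F$; then
\[
d(f, g^{-1}hg\cdot f)=d(g\cdot f, hg\cdot f)=d(v,h\cdot v)\le K,
\]
so the conjugate $g^{-1}hg$ lies in the finite set $\Gamma$. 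The key observation is that this conjugate determines $v$ up to the centraliser: if $g_1^{-1}hg_1=g_2^{-1}hg_2$ then $g_1g_2^{-1}\in C_G(h)$. Hence, fixing for each realised pair $(f,\gamma)$ a single element $g_0=g_0(f,\gamma)$ with $g_0^{-1}hg_0=\gamma$, every $v$ satisfies $v=(gg_0^{-1})\cdot(g_0\cdot f)\in C_G(h)\cdot v_0$ with $v_0=g_0\cdot f\in\disp{K}{h}$. As $(f,\gamma)$ ranges over a finite set, the $v_0$ form a finite set $V_0$ with $\disp{K}{h}^{(0)}\subseteq C_G(h)\cdot V_0$; uniform local finiteness then upgrades this to finitely many $C_G(h)$--orbits of simplices, giving cocompactness.

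The only real obstacle is organising this bookkeeping correctly: one must see that properness together with local finiteness force the conjugates $g^{-1}hg$ into a finite set, and that the centraliser absorbs exactly the ambiguity in the expression $v=g\cdot f$. No finer systolic input (axes, the internal structure of $\minset{h}$, or good geodesics) is needed for this particular statement.
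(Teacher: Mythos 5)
Your proof is correct, and its engine is the same as the paper's: the conjugate $g^{-1}hg$ attached to a vertex $v=g\cdot f$ of $\disp{K}{h}$ has uniformly bounded displacement at a fixed basepoint, properness together with local finiteness therefore confines these conjugates to a finite set, and equality of two such conjugates yields an element of $C_G(h)$. Where you differ is the logical organisation. The paper reproduces K.~Ruane's $\mathrm{CAT}(0)$ argument verbatim: a proof by contradiction that takes a sequence $x_n\in\disp{K}{h}$ with $d(C_G(h)\cdot x_0,x_n)\to\infty$, translates it into a compact set $D$ satisfying $G\cdot D=X$, bounds the displacement of the conjugates $g_nhg_n^{-1}$ on $D$ by $2\,\mathrm{diam}(D)+K$, and then extracts equal conjugates by properness to contradict the escape to infinity. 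You argue directly: choosing finitely many vertex-orbit representatives $F$ rather than a compact fundamental set gives the sharper bound $d(f,g^{-1}hg\cdot f)\leqslant K$ with no diameter slop, and the pigeonhole on the finite set $\Gamma$ produces an explicit finite set $V_0$ of $C_G(h)$-orbit representatives for the vertices of $\disp{K}{h}$, which local finiteness upgrades to cocompactness. The direct version buys a constructive statement (explicit representatives, no sequences or contradiction); the paper's version buys the observation that Ruane's proof transfers with literally no change, which is precisely the point the author wishes to make. One wording to tighten: a ``realised pair'' $(f,\gamma)$ should be required to satisfy $\gamma\in\Gamma_f$ (not merely $\gamma\in\Gamma$), since that is what guarantees $v_0=g_0\cdot f\in\disp{K}{h}$; this is automatic in your setup because the pair arises from an actual vertex $v=g\cdot f$ of $\disp{K}{h}$, but it deserves to be said.
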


\begin{proof}The proof is a verbatim translation of K. Ruane's proof of a similar result for $\mathrm{CAT}(0)$ spaces \cite[Theorem 3.2]{Kru}. The original proof treats only the case where $ \disp{K}{h} = \minset{h}$, however it is straightforward to check that it carries through for any $\disp{K}{h}$. We include the proof for the sake of completeness.

First we check that $C_G(h)$ leaves $\disp{K}{h}$ invariant. Let $x \in \disp{K}{h}$ be a vertex and take $g\in C_G(h)$. Then we have \[d(g\cdot x, hg \cdot x)=d(g\cdot x, gh \cdot x)=d(x, h \cdot x) \leqslant K\] and thus $g \cdot x \in \disp{K}{h}$.
Observe that the action of $C_G(h)$ on $\disp{K}{h}$ is proper, since the action of $G$ on $X$ is proper. We only need to check cocompactness. We proceed by contradiction. Assume that there is no compact subset of $\disp{K}{h}$ whose $C_G(h)$--translates cover $\disp{K}{h}$, and pick a vertex $x_0 \in \disp{K}{h}$. Then there exists a sequence of vertices $(x_n)_{n=1}^{\infty} $ of $ \disp{K}{h}$ such that $d(C_G(h) \cdot x_0, x_n) \rightarrow \infty$ as $n \rightarrow \infty$. Let $D \subset X$ be a compact set containing $x_0$ such that $G \cdot D = X$, and let $(g_n)_{n=1}^{\infty}$ be a sequence of elements of $G$ 
such that $g_n \cdot x_n \in D$. We can assume (by passing to a subsequence if necessary) that $g_n \neq g_m$ for $n \neq m$. Indeed, we have \[d(x_0, g_{n}^{-1} \cdot x_0) \geqslant d(x_0, x_n) -\mathrm{diam}D \rightarrow \infty \text{ as } n \rightarrow \infty.\]

Now consider the family of elements $\{g_nhg_n^{-1}\}_{n\geqslant 1}$ of $G$. We claim that the displacement functions $d_{g_nhg_n^{-1}}$ are uniformly bounded on $D$. Let $y \in D$ be a vertex. We have
\begin{equation*}
\begin{aligned}
d_{g_nhg_n^{-1}}(y) = {} & d(y, g_nhg_n^{-1} \cdot y) \\
					\leqslant {} & d(y, g_n \cdot x_n) + d(g_n \cdot x_n, g_nhg_n^{-1}(g_n \cdot x_n))\\
					 & + d(g_nhg_n^{-1}(g_n \cdot x_n), g_nhg_n^{-1} \cdot y)\\
					 = {} & d(y, g_n \cdot x_n)+ d(g_n \cdot x_n, g_nh \cdot x_n)+ d(g_nh \cdot x_n, g_nhg_n^{-1} \cdot y)\\
		  \leqslant {} & \mathrm{diam}(D) + K +\mathrm{diam}(D).
\end{aligned}
\end{equation*}

Since $G$ acts properly, it must be $g_nhg_n^{-1} = g_mhg_m^{-1}$ for $n \neq m$ (after passing to a subsequence). Therefore for all $n \neq m$ we have that $g_m^{-1}g_n \in C_G(h)$. Now for any $n \neq 1$ we get 
\begin{equation*}d(x_n, g_n^{-1}g_1 \cdot x_0) \leqslant d(x_n, g_n^{-1}g_1 \cdot x_1) + d(g_n^{-1}g_1 \cdot x_1 ,g_n^{-1}g_1 \cdot x_0) \leqslant \mathrm{diam}(D) + d(x_0, x_1). 
\end{equation*}

This gives a contradiction since $g_n^{-1}g_1 \in C_G(h)$, and by the choice of $x_n$ we have that $d(x_n, C_G(h) \cdot x_0) \rightarrow \infty$ as $n \rightarrow \infty$.\end{proof}

We are ready now to prove Lemma~\ref{lem:disp}.

\begin{proof}[Proof of Lemma~\ref{lem:disp}]
Pick any $K' \leqslant K$ and let $x_0 \in \disp{K'}{h}$ be a vertex. By Theorem~\ref{tw:centraliser} the centraliser $C_G(h)$ acts cocompactly on $\disp{K}{h}$. Hence there exists $R>0$ such that $\disp{K}{h} \subset C_G(h) \cdot B_{R}(x_0, X)$. Since $C_G(h) \cdot x_0 \subset \disp{K'}{h}$, taking $R$ as $C$ proves the lemma.
\end{proof}

\begin{rem}We believe that in Lemma~\ref{lem:disp} one can obtain a concrete distance estimate, i.e.,\ in the formula $\disp{K'}{h} \subset B_C(\disp{K}{h}, X)$ one can express $C$ as an explicit function of $K$ and $K'$. However, for our purposes, the existence of any constant $C$ is sufficient.
\end{rem}

\subsection{Trivial action on the boundary}\label{subsec:trivialact}
In this section we characterise hyperbolic isometries that act trivially on the boundary as being virtually central. More precisely, we show the following.

\begin{tw}\label{tw:virtuallycentral} Let $G$ be a group acting geometrically on a systolic complex $X$, and let $h \in G$ be a hyperbolic isometry. Then $h$ acts trivially on the boundary $\partial X$ if and only if the centraliser $C_G(h)$ has finite index in $G$.
\end{tw}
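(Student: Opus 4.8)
\medskip

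The plan is to prove the two directions separately, using the $K$--displacement machinery already established.

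\medskip

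\emph{The `if' direction (finite index centraliser $\Rightarrow$ trivial action).}
Suppose $[G : C_G(h)] < \infty$. First I would reduce the problem to showing that every point of $\partial X$ is represented by a good geodesic ray lying inside some $K$--displacement set of $h$, because any such point is fixed by $h$. To see the latter fact, let $\eta = (v_0, v_1, \ldots)$ be a good geodesic ray with all $v_i \in \disp{K}{h}$, so $d(v_i, h \cdot v_i) \leqslant K$ for all $i$. Then $h \cdot \eta = (h \cdot v_0, h \cdot v_1, \ldots)$ is again a good geodesic ray (by property~(\ref{item:ggeoaregeometric})) and it stays within distance $K$ of $\eta$, so $[h \cdot \eta] = [\eta]$ in $\partial X$; that is, $h$ fixes $[\eta]$. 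It remains to produce such a representative for an \emph{arbitrary} boundary point $[\xi]$. Here I would invoke the finite index hypothesis: since $C_G(h)$ has finite index, it acts cocompactly on $X$ (as $G$ does), and by Theorem~\ref{tw:centraliser} it also acts geometrically on each $\disp{K}{h}$. Fixing a basepoint $O \in \minset{h}$, cocompactness of the $C_G(h)$--action on $X$ together with Lemma~\ref{lem:disp} should force every orbit, and hence every geodesic ray based at $O$, to remain within bounded distance of some fixed $\disp{K}{h}$. Concretely, for a ray $\xi$ from $O$, each vertex $\xi(i)$ lies within a uniform constant $R$ of the orbit $C_G(h) \cdot O \subset \minset{h}$; translating $\xi$ by suitable centraliser elements and using Corollary~\ref{coro:doublecontracting} to control the fellow-travelling, I would replace $\xi$ by an equivalent good ray whose vertices satisfy $d(\xi'(i), \minset{h}) \leqslant R'$ uniformly, hence $\xi' \subset \disp{K}{h}$ for $K = \trol{h} + 2R'$. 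Then $[\xi] = [\xi']$ is fixed by $h$.

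\medskip

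\emph{The `only if' direction (trivial action $\Rightarrow$ finite index centraliser).}
This is the harder direction and I expect it to be the main obstacle. Here I am handed that $h$ fixes every point of $\partial X$ and must deduce an algebraic statement about $C_G(h)$. The natural strategy is contrapositive: assuming $[G : C_G(h)] = \infty$, I would produce a boundary point not fixed by $h$. The key tool is Theorem~\ref{tw:almostext} (almost extendability): given any two vertices $x, y$ there is a good geodesic ray from $x$ passing within the universal constant $E$ of $y$. The idea is to pick a sequence $g_n \in G$ whose classes escape $C_G(h)$ (so that $d_h$ is large near $g_n \cdot O$ in a controlled way, or more precisely so the $g_n \cdot O$ leave every $\disp{K}{h}$), use almost extendability to run a good ray $\eta_n$ from $O$ toward $g_n \cdot O$, and extract a limiting good geodesic ray $\eta$ (via the diagonal/compactness argument as in the proof of Claim~2 above). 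If $[G:C_G(h)]=\infty$, this limit ray should leave every $\disp{K}{h}$, and I would argue that a ray escaping all displacement sets cannot be fixed by $h$: if $[h \cdot \eta] = [\eta]$ then by Corollary~\ref{coro:doublecontracting} the rays $\eta$ and $h \cdot \eta$ stay at uniformly bounded distance $d(\eta(i), h \cdot \eta(i)) \leqslant d(O, h \cdot O) + 2D + 1$, which forces $d_h(\eta(i))$ to be uniformly bounded, i.e.\ $\eta \subset \disp{K}{h}$ for some $K$ --- contradicting that $\eta$ escapes every displacement set.

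\medskip

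\emph{Where the difficulty lies.} The crux is the implication ``$[G : C_G(h)] = \infty \Rightarrow$ there is a good geodesic ray escaping every $\disp{K}{h}$.'' One must convert the purely algebraic failure of finite index into a geometric statement that the displacement sets are ``thin'' (do not coarsely fill $X$), and then use almost extendability to steer a good ray out of them in a way that survives the limiting process. Controlling that the limit ray genuinely escapes — rather than merely drifting while staying within bounded distance of some $\disp{K}{h}$ — is the delicate point, and it is precisely why Theorem~\ref{tw:almostext} is needed as a separate input: without extendability one could not guarantee a good ray reaching the far-away, high-displacement regions of $X$.
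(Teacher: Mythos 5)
Your ``if'' direction is essentially the paper's argument and is fine, though you make it harder than it needs to be: once cocompactness of $C_G(h)$ on $X$ (finite index) together with its preservation of $\minset{h}$ puts every vertex of $X$ within a uniform distance $R$ of $\minset{h}$, the displacement function is bounded by $\trol{h}+2R$ on \emph{all} of $X$, so every good geodesic ray is automatically contained in $\disp{\trol{h}+2R}{h}$ and hence fixed. There is no need to replace a given ray by an equivalent one or to invoke Corollary~\ref{coro:doublecontracting} here.

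The ``only if'' direction, however, has a genuine gap, and it sits exactly at the point you flag as the crux. Your plan is to extract, by a diagonal argument, a limit of good rays $\eta_n$ aimed (via Theorem~\ref{tw:almostext}) at points $g_n\cdot O$ of larger and larger displacement, and to argue that the limit ray escapes every $\disp{K}{h}$. This step does not merely require care --- it can fail outright. The pointwise limit of the $\eta_n$ only remembers initial segments, and the high-displacement targets $g_n\cdot O$ sit at distances along $\eta_n$ tending to infinity, so the limit ray retains no trace of them. Concretely, if the points of large displacement lie in infinitely many different ``branches'' off of a bounded-displacement region (picture a tree-like complex where $h$ translates an axis and the $g_n\cdot O$ sit in distinct branches departing the axis further and further out), then each $\eta_n$ follows the bounded-displacement region for a long time before turning off, and the limit ray stays in that region forever. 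Such a limit ray lies in a single $\disp{K}{h}$ and \emph{is} fixed by $h$, so it yields no contradiction, even though non-fixed rays exist elsewhere.

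The repair is to dispense with the limit altogether, which is what the paper does. Assume the action on $\partial X$ is trivial and fix a basepoint $y$. Then \emph{every} good ray $\eta=(v_0=y,v_1,\ldots)$ from $y$ satisfies $[\eta]=[h\cdot\eta]$, so Corollary~\ref{coro:doublecontracting} gives the bound $d(v_i,h\cdot v_i)\leqslant d(y,h\cdot y)+2D+1$, and crucially this bound is \emph{uniform over all rays from $y$ simultaneously}. Combined with Theorem~\ref{tw:almostext} --- every vertex $x$ lies within $E$ of a vertex on some good ray from $y$ --- this bounds the displacement function on all of $X$: $X=\disp{K+2E}{h}$ with $K=d(y,h\cdot y)+2D+1$. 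Theorem~\ref{tw:centraliser} then says $C_G(h)$ acts cocompactly on $\disp{K+2E}{h}=X$, hence has finite index. The point you missed is that triviality of the boundary action hands you this uniform bound across the whole family of rays at once, so one never needs to manufacture a single ray witnessing unbounded displacement; your contrapositive formulation forces you into exactly the limiting construction that breaks.
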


The theorem is a systolic analogue of a theorem of K. Ruane for $\mathrm{CAT}(0)$ spaces \cite[Theorem 3.4]{Kru}. In a certain way, our situation is more restrictive. Namely, by \cite[Corollary 5.8]{OsaPry} the centraliser $C_G(h)$ is commensurable with the product $F_n \times \mathbb{Z}$, where $F_n$ is the free group on $n$ generators for some $n \geqslant 0$. It follows that either of the assertions of Theorem~\ref{tw:virtuallycentral} holds true if and only if the group $G$ itself is commensurable with $F_n \times \mathbb{Z}$.

\begin{proof}[Proof of Theorem~\ref{tw:virtuallycentral}.]
\textbf{``if'' direction.} By Theorem~\ref{tw:centraliser} the centraliser $C_G(h)$ acts cocompactly on the minimal displacement set ${\minset{h}\!\subset X}$. Since the index $[G \colon C_G(h)]$ is finite, it follows that the action of $C_G(h)$ on $X$ is cocompact as well. Therefore there exists a constant $K\geqslant 0$ such that for any vertex $x \in X$, there is a vertex $y \in \minset{h}$ with $d(x,y) \leqslant K$. 
Hence, by the triangle inequality, for any $x \in X$  we have  $d(x, h \cdot x) \leqslant \trol{h} + 2K$. 

Now let $ \eta = (v_0, v_1, v_2, \ldots)$ be a good geodesic ray in $X$. For any $i \geqslant 0$ we have $d(v_i, h \cdot v_i) \leqslant\trol{h}+2K$ and hence $[\eta]= [h\cdot \eta]$ in $\partial X$.\medskip

\textbf{``only if'' direction.} Choose a vertex $y \in X$ and let $x \in X$ be an arbitrary vertex. By Theorem~\ref{tw:almostext} there exists a good geodesic ray $\eta=(v_0, v_1, v_2, \ldots)$ such that $v_0=y$ and for some $i \geqslant 0$ we have $d(v_i, x)\leqslant E$, where $E$ is a constant independent of $x$ and $y$. 

The isometry $h$ acts trivially on the boundary, so we have $[\eta]= [h \cdot \eta]$. Applying Corollary~\ref{coro:doublecontracting} we obtain \[d(v_i, h \cdot v_i) \leqslant d(v_0, h \cdot v_0) +2D +1,\] where $D$ is the constant appearing in Theorem~\ref{tw:contracting}.
In other words, we have $v_i \in \disp{K}{h}$ where $K= d(v_0, h \cdot v_0) +2D +1$. Since $d(v_i,x) \leqslant E$, the triangle inequality implies that $x \in \disp{K+2E}{h}$ (see the discussion after Definition~\ref{def:minset}).  Because $x$ was arbitrary, we have $X = \disp{K+2E}{h}$. By Theorem~\ref{tw:centraliser} the centraliser $C_G(h)$ acts cocompactly on $X$ and so it has finite index in $G$.
\end{proof}

We obtain the following corollary.

\begin{cor}\label{coro:entgroupactstriv} Let $G$ be a torsion-free group, acting geometrically on a systolic complex $X$. Then $G$ acts trivially on $\partial X$ if and only if $G \cong \mathbb{Z}$ or $G \cong \mathbb{Z}^2$.
\end{cor}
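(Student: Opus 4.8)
The plan is to combine Theorem~\ref{tw:virtuallycentral} with a little elementary group theory, using the torsion-freeness of $G$ to promote ``virtually central'' to genuinely ``central''. Throughout I use that, since $G$ is torsion-free and acts geometrically on $X$, every nontrivial element has infinite order and is therefore a hyperbolic isometry of $X$ (as recalled in the introduction); moreover $G$ is infinite, so $X$ is noncompact and $\partial X$ is defined. For the ``if'' direction, suppose $G \cong \mathbb{Z}$ or $G \cong \mathbb{Z}^2$. Then $G$ is abelian, so $C_G(h) = G$ has finite index in $G$ for every $h$; by Theorem~\ref{tw:virtuallycentral} each nontrivial $h$ acts trivially on $\partial X$, and the identity acts trivially by definition, so $G$ acts trivially on $\partial X$.

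For the ``only if'' direction, suppose $G$ acts trivially on $\partial X$. Then every nontrivial $h \in G$ is hyperbolic and acts trivially, so Theorem~\ref{tw:virtuallycentral} gives $[G : C_G(h)] < \infty$ for all $h$; equivalently, every conjugacy class of $G$ is finite. Since $G$ acts geometrically on $X$ it is finitely generated, say by $s_1, \dots, s_k$, and the centre satisfies $Z(G) = \bigcap_{i=1}^{k} C_G(s_i)$, a finite intersection of finite-index subgroups, whence $[G : Z(G)] < \infty$. Schur's theorem then shows that the commutator subgroup $[G,G]$ is finite; as $G$ is torsion-free this forces $[G,G] = 1$, so $G$ is abelian. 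Being finitely generated, torsion-free and abelian, $G \cong \mathbb{Z}^k$.

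It remains to pin down $k$. Choosing any nontrivial $h$, we have $C_G(h) = G \cong \mathbb{Z}^k$, and by \cite[Corollary 5.8]{OsaPry} (as recorded in the remark following Theorem~\ref{tw:virtuallycentral}) this centraliser is commensurable with $F_n \times \mathbb{Z}$ for some $n \geq 0$. A group commensurable with $\mathbb{Z}^k$ contains no nonabelian free subgroup, so $n \leq 1$ and hence $F_n \times \mathbb{Z} \in \{\mathbb{Z}, \mathbb{Z}^2\}$; comparing torsion-free ranks of commensurable abelian groups then gives $k \in \{1,2\}$, i.e.\ $G \cong \mathbb{Z}$ or $G \cong \mathbb{Z}^2$. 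The only point requiring real care is this ``only if'' direction: knowing a \emph{single} element acts trivially yields only commensurability with some $F_n \times \mathbb{Z}$, which for $n \geq 2$ (e.g.\ $F_2 \times \mathbb{Z}$, which is torsion-free but nonabelian) is neither $\mathbb{Z}$ nor $\mathbb{Z}^2$. What rules this out is the hypothesis that \emph{every} element acts trivially: it makes all conjugacy classes finite, and finite generation together with Schur's theorem converts this global condition into abelianness of $G$, which is precisely where torsion-freeness is essential.
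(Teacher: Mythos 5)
Your proof is correct, but your ``only if'' direction takes a genuinely different route from the paper's. The paper works with a single centraliser: it invokes \cite[Corollary 5.8]{OsaPry} to find a finite-index subgroup $H \cong F_n \times \mathbb{Z}$ of $C_G(h)$, rules out $n \geqslant 2$ on the grounds that non-trivial elements of $F_n$ could not be virtually central, concludes that $G$ is torsion-free and virtually $\mathbb{Z}$ or virtually $\mathbb{Z}^2$, and then appeals to the classification of such groups --- in the virtually-$\mathbb{Z}^2$ case the only non-abelian torsion-free possibility is the Klein bottle group, which is excluded because it also contains elements that are not virtually central. You instead globalise first: Theorem~\ref{tw:virtuallycentral} applied to \emph{all} elements makes every conjugacy class finite; finite generation of $G$ (via Milnor--\v{S}varc, from the geometric action) gives $Z(G) = \bigcap_{i} C_G(s_i)$ of finite index; and Schur's theorem plus torsion-freeness forces $G$ to be abelian, hence $G \cong \mathbb{Z}^k$. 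Only then do you use \cite[Corollary 5.8]{OsaPry}, merely to bound $k \leqslant 2$. Your route avoids the classification of torsion-free virtually-$\mathbb{Z}$ and virtually-$\mathbb{Z}^2$ groups (in particular the Klein bottle case), at the cost of invoking Schur's theorem and finite generation, neither of which the paper needs explicitly; the paper's route leans harder on the centraliser structure theorem and on crystallographic-type classification facts. Both arguments correctly locate where the full strength of the hypothesis is used: triviality of the action of every element (not just one) is what excludes groups such as $F_2 \times \mathbb{Z}$.
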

\begin{proof} If $G$ is isomorphic to either $\mathbb{Z}$ or $\mathbb{Z}^2$ then every element of $G$ is central, and by Theorem~\ref{tw:virtuallycentral} it acts trivially on $\partial X$.

Now assume that $G$ acts trivially on $\partial X$. Since $G$ is torsion-free, all of its elements are hyperbolic, and therefore by Theorem~\ref{tw:virtuallycentral} every element is virtually central.
Pick $h \in G$. By \cite[Corollary 5.8]{OsaPry} the centraliser $C_G(h)$ contains a finite-index subgroup $ H \cong F_n \times \mathbb{Z}$, where $F_n$ is the free group on $n$ generators for some $n \geqslant 0$. We must have $n \leqslant 1$ for otherwise no non-trivial element of $F_n$ would be virtually central in $G$. This means that $H$ is isomorphic to either $\mathbb{Z}$ or $\mathbb{Z}^2$. Now since $H$ has finite index in $C_G(h)$ and $C_G(h)$ has finite index in $G$ we get that $H$ has finite index in $G$. 

If $H \cong \mathbb{Z}$ then $G$ is a virtually cyclic torsion-free group, and hence it must be isomorphic to $\mathbb{Z}$. If $H \cong \mathbb{Z}^2$ then $G$ is a torsion-free group that contains $\mathbb{Z}^2$ as a finite-index subgroup. Such $G$ must be isomorphic to either $\mathbb{Z}^2$ or to the fundamental group of the Klein bottle. Since the latter contains elements that are not virtually central, we conclude that $G \cong \mathbb{Z}^2$.
\end{proof}


\section{Good geodesics}\label{sec:constructgood}
The proofs in Section~\ref{sec:fixedpoints} require going through the construction of good geodesics (unlike proofs in previous sections, where only certain `formal' properties were needed). In this section we give a sketch of this construction.

In order to define good geodesics we first describe the construction of \emph{Euclidean ge\-o\-des\-ics}. This construction is fairly involved, and hence it is divided into a few steps. Our exposition is based on \cite[Sections 7--9]{OP} (we refer the reader there for the proofs of various statements discussed below). Throughout this section let $X$ be a systolic complex. Some notions appearing in the construction are presented in Figures~\ref{fig:genericposition} and \ref{fig:alpha_alphaprim} in Section~\ref{sec:fixedpoints} (in the special case of $X= \E$).

\subsection{Directed geodesics}\label{subsec:directedgeo}
Let $x, y \in X$ be two vertices and put $n= d(x,y)$. A \emph{directed geodesic from} $x$ \emph{to} $y$ is a sequence of simplices $(\sigma_i)_{i=0}^n$ such that $\sigma_0 =x$, $\sigma_n=y$ and the following two conditions are satisfied:
\begin{enumerate}
\item any two consecutive simplices $\sigma_i$ and $\sigma_{i+1}$ are disjoint and together they span a simplex of $X$,
\item for any three consecutive simplices $\sigma_{i-1}, \sigma_i, \sigma_{i+1}$we have \[\mathrm{Res}(\sigma_{i-1}, X) \cap B_1(\sigma_{i+1},X)= \sigma_i,\]
where $\mathrm{Res}(\sigma_{i-1}, X)$ is the union of all simplices of $X$ that contain $\sigma_{i-1}$.

\end{enumerate}

A directed geodesic from $x$ to $y$ always exists and it is unique. One can show that $\sigma_i \subset S_i(x,X) \cap S_{n-i}(y,X)$, and therefore any sequence of vertices $(v_i)_{i=0}^n$ such that $v_i \in \sigma_i$ is a geodesic. Finally, as the name suggests, directed geodesics in general are not symmetric -- usually a directed geodesic from $x$ to $y$ is not equal to a directed geodesic from $y$ to $x$.

\subsection{Layers}\label{subsec:layers}
The intersection $S_i(x,X) \cap S_{n-i}(y,X)$ is called the \emph{layer} $i$ \emph{between} $x$ \emph{and} $y$ and it is denoted by $L_i$. For any $i$ the layer $L_i$ is convex and $\infty$--large.  (Layers in fact can be defined in the same way for any two convex subcomplexes $V$ and $W$ such that for every $v \in V$ and $w \in W$ one has $d(v,w)= n$ for some fixed $n >0$.)

\begin{convent}\label{conv:directionconvention}
Suppose that $(\sigma_i)_{i=0}^n$ is a directed geodesic from $x$ to $y$ and $(\tau_i)_{i=0}^n$ is a directed geodesic from $y$ to $x$. We introduce the following convention: despite $(\sigma_i)_{i=0}^n$ and $(\tau_i)_{i=0}^n$ go in the opposite directions, we index simplices of $(\tau_i)_{i=0}^n$ in the same direction as for $(\sigma_i)_{i=0}^n$, i.e.,\ $\tau_0=x, \tau_1, \ldots, \tau_{n-1}, \tau_n=y$.
\end{convent}

Observe that both $\sigma_i$ and $\tau_i$ are contained in the layer $L_i$. Define the \emph{thickness of layer} $L_i$ (with respect to $(\sigma_i)_{i=0}^n$ and $(\tau_i)_{i=0}^n$) to be the maximal distance between vertices of $\sigma_i$ and $\tau_i$ (since layers are convex, this distance is always realised inside $L_i$).
The layer is \emph{thin} if its thickness is at most $1$, and it is \emph{thick} otherwise.

A pair of indices $(j,k)$ such that $0 <j<k<n$ and $j< k-1$ is called a \emph{thick interval} if layers $L_j$ and $L_k$ are thin, and for every $i$ such that $j<i<k$ the layer $L_i$ is thick. If for some $i$ we have $j<i<k$ then we say that $i$ belongs to the interval~$(j,k)$.


\subsection{Characteristic surfaces}\label{subsec:charsurfaces}
Let $(j,k)$ be a thick interval and let $s_i \in \sigma_i$ and $t_i \in \tau_i$ be vertices such that for any $j \leqslant i \leqslant k$ the distance between $s_i$ and $t_i$ is equal to the thickness of layer $L_i$. Consider the sequence of vertices \[(s_j, s_{j+1}, \ldots, s_{k-1}, s_{k}, t_k, t_{k-1}, \ldots, t_{j+1}, t_j, s_j).\] Observe that any two consecutive vertices in the above sequence are adjacent and therefore this sequence defines a closed loop which we denote by $\gamma$. In fact $\gamma$ is always an embedded loop (this amounts to saying that $s_j \neq t_j$ and $s_k \neq t_k$). 

Let $S \colon \Delta \to X$ be a minimal surface spanned by $\gamma$, i.e.,\  a simplicial map from a triangulation of a $2$--disk $\Delta$ such that:
\begin{enumerate}
\item the boundary of $\Delta$ is mapped isomorphically to $\gamma$,
\item the disk $\Delta$ consists of the least possible number of triangles (among all disks $\Delta'$ for which there exists a simplicial map $S' \colon \Delta' \to X$ satisfying (1)).
\end{enumerate}
We call $S \colon \Delta \to X$ a \emph{characteristic surface} (for the thick interval $(j,k)$) and we call $\Delta$ a \emph{characteristic disk}. It is a standard fact that a minimal disk is always systolic, i.e.,\ every of its internal vertices is incident to at least $6$ triangles.

The cycle $\gamma$ does not have to be unique, and hence there could be many characteristic surfaces. For any two characteristic surfaces $S \colon \Delta \to X$ and $S' \colon \Delta' \to X$ the disks $\Delta$ and $\Delta'$ are isomorphic. We can thus identify all such disks and denote \emph{the} characteristic disk by $\Delta$. Now for any simplex $\rho \in \Delta$ the images $S(\rho)$ for all possible characteristic surfaces span a simplex of $X$, which we denote $\mathcal{S}(\rho)$. This assignment, called the \emph{characteristic mapping}, respects inclusions, i.e.,\ if $\rho_1 \subseteq \rho_2$ then  $\mathcal{S}(\rho_1) \subseteq \mathcal{S}(\rho_2)$.


\subsection{Geometry of characteristic disks}\label{subsec:geomchardisks}
For any $i$ such that $j\leqslant i \leqslant k$, let $v_i $ and $ w_i$ be vertices of $\Delta$ that are preimages of $s_i $ and $t_i$ respectively, for some characteristic surface $S \colon \Delta \to X$. In fact vertices $v_i$ and $w_i$ are uniquely defined and the sequence \[(v_j, v_{j+1}, \ldots, v_{k-1}, v_{k}, w_k, w_{k-1}, \ldots, w_{j+1}, w_j, v_j)\]
constitutes the boundary of the disk $\Delta$.

Denote by $\E$ the equilateral triangulation of the Euclidean plane. Clearly $\E$ viewed as a simplicial complex is systolic. 
One shows that the characteristic disk $\Delta$ can be isometrically embedded in $\E$ (such disk is called \emph{flat}). Moreover, after embedding $\Delta \subset \E$, the edges $[v_j, w_j]$ and $[v_k, w_k]$ are parallel, and consecutive layers in $\Delta$ between them  are contained in straight lines of $\E$ (treated as subcomplexes of $\E$), that are parallel to the lines containing $[v_j, w_j]$ and $[v_k, w_k]$. In particular for any $i$ the vertices $v_i$ and $w_i$ lie on a straight line inside $\E$. The subpath of this line between $v_i$ and $w_i$ is the unique geodesic between $v_i$ and $w_i$ in $\Delta$, which we denote by $v_iw_i$. The geodesic $v_iw_i$ is in fact equal to the entire layer $i$ in $\Delta$ (between the edges $[v_j, w_j]$ and $[v_k, w_k]$).

Finally, for any characteristic surface $S \colon \Delta \to X$ (and hence for a characteristic mapping $\mathcal{S} \colon \Delta \to X$ as well) the image of the geodesic $v_iw_i$ is contained in the layer $i$ in $X$. Also, any characteristic surface $S \colon \Delta \to X$ restricted to $v_iw_i$ is an isometric embedding.

\subsection{Euclidean diagonals}\label{subsec:euclideandiag}
Given the characteristic disk $\Delta$, for every $i \in \{j, \ldots, k \}$ let $v_i'$ and $w_i'$ be points on the unique geodesic between $v_i$ and $w_i$ in $\Delta$, that are at distance $\frac{1}{2}$ from $v_i$ and $w_i$ respectively. In particular $v'_j=w_j'$ and $v_k'=w_k'$. Consider a piecewise linear loop defined as the concatenation of straight segments between consecutive points in the sequence \[(v'_j=w_j', v_{j+1}', \ldots v_{k-1}', v_k'=w_k', w_{k-1}', \ldots, w_{j+1}', w_j'=v_j'),\]
and let $\Delta'$ be a polygonal domain inside $\Delta$ enclosed by this loop. We call $\Delta'$ the \emph{modified characteristic disk}. We endow $\Delta'$ with a path metric induced from the Euclidean metric on $\E \cong \mathbb{E}^2$. Observe that $\Delta'$ is simply connected, and therefore this path metric is in fact a $\mathrm{CAT}(0)$ metric. (The disk $\Delta'$ does not have to be a convex subset of $\E$ with respect to the Euclidean metric on $\E$, and therefore a $\mathrm{CAT}(0)$ geodesic inside $\Delta'$ does not have to be a straight line.) 

The \emph{Euclidean diagonal} of $\Delta$ is a sequence of simplices $(\rho_i)_{j+1}^{k-1}$ of $\Delta$ defined as follows. Let $\alpha$ be a $\mathrm{CAT}(0)$ geodesic in $\Delta'$ between points $v_j'=w_j'$ and $v_k'=w_k'$. For every $i$ such that $j<i<k$ choose vertices on the geodesic $v_iw_i$, that are closest to the point of intersection $\alpha \cap v_iw_i$.  For any $i$, it is either a single vertex, in that case we put $\rho_i$ to be that vertex, or in the case when $\alpha$ goes through the barycentre of some edge of $v_iw_i$, then we put $\rho_i$ to be this edge. One can show that the Euclidean diagonal for $\Delta$ satisfies the following two conditions:

\begin{enumerate}

	\item for any $i$ such that $j <i< k-1$ simplices $\rho_i$ and $\rho_{i+1}$ span a simplex,
	\item vertices $v_j, w_j, \sigma_{j+1}$ span a simplex and vertices $v_k, w_k, \rho_{k-1}$ span a simplex (in particular $\rho_{j+1}$ and $\rho_{k-1}$ are necessarily vertices).

\end{enumerate}

\subsection{Euclidean geodesics}\label{subsec:euclideangeo}
We are ready now to define Euclidean geodesics.

\begin{de}\label{def:euclideangeo}
The \emph{Euclidean geodesic} between vertices $x$ and $y$ in $X$, such that $d(x,y)=n$ is the sequence of simplices $(\delta_i)_{i=0}^n$ defined as follows.  For any $i$ such that $0<i<n$, if the layer $L_i$ is thin then set \[\delta_i= \mathrm{span}\{\sigma_i, \tau_i\},\] where $\sigma_i$ and $\tau_i$ are the simplices of the directed geodesics between $x$ and $y$ that are contained in layer $L_i$. For any $i$ such that the layer $L_i$ is thick, consider the thick interval $(j,k)$ that contains $i$ and put \[\delta_i=\mathcal{S}(\rho_i),\] where $\rho_i$ is the simplex of Euclidean diagonal that is contained in the layer $i$ in the characteristic disk for $(j,k)$, and $\mathcal{S}$ denotes the characteristic mapping. Finally let  $\delta_0=x$ and $\delta_n=y$.
\end{de} 

By definition, consecutive simplices of the Euclidean geodesic $(\delta_i)_{i=0}^n$  are contained in consecutive layers between $x$ and $y$. Unlike for directed geodesics, not every two consecutive simplices $\delta_i, \delta_{i+1}$ span a simplex of $X$. However, the following holds.

\begin{prop}\cite[Remark 3.1]{OP}\label{prop:euclideanisgeo} Suppose $(\delta_i)_{i=0}^n$ is a Euclidean geodesic between vertices $x$ and $y$. Then there exists a sequence of vertices $(v_i)_{i=0}^n$ such that $v_i \in \delta_i$ and $(v_i)_{i=0}^n$ is a geodesic.
\end{prop}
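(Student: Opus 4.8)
=== PROOF PROPOSAL ===

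\textbf{Setting up the claim.} The statement to prove is Proposition~\ref{prop:euclideanisgeo}: given a Euclidean geodesic $(\delta_i)_{i=0}^n$ between vertices $x$ and $y$ with $d(x,y)=n$, there exists a sequence of vertices $(v_i)_{i=0}^n$ with $v_i \in \delta_i$ forming a geodesic in $X$. The key structural fact to exploit is that each $\delta_i$ is contained in the layer $L_i = S_i(x,X) \cap S_{n-i}(y,X)$, so any selection $v_i \in \delta_i$ automatically satisfies $d(x,v_i)=i$ and $d(v_i,y)=n-i$. The plan is to show that one can choose the $v_i$ so that \emph{consecutive} vertices $v_i, v_{i+1}$ are adjacent; once consecutive adjacency holds, being a geodesic follows from the layer membership, since $d(v_0,v_n)=d(x,y)=n$ forces the edge-path $(v_0,\dots,v_n)$ of length $n$ to be geodesic.

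\textbf{Reduction to a local selection problem.} First I would reduce to choosing $v_i \in \delta_i$ with $v_i, v_{i+1}$ spanning an edge for each $i$. The obstacle flagged in the excerpt is precisely that $\delta_i$ and $\delta_{i+1}$ need \emph{not} span a simplex of $X$, so a naive ``pick any vertex in each $\delta_i$'' argument fails. I would therefore split into cases according to the thin/thick structure of the layers. Where $L_i$ and $L_{i+1}$ are both thin, $\delta_i = \mathrm{span}\{\sigma_i,\tau_i\}$ and $\delta_{i+1}=\mathrm{span}\{\sigma_{i+1},\tau_{i+1}\}$; here the directed-geodesic property (consecutive simplices $\sigma_i,\sigma_{i+1}$ span a simplex) gives a compatible choice along the $\sigma$'s. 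Where the index lies inside a thick interval $(j,k)$, I would instead use the Euclidean-diagonal selection: the simplices $\rho_i,\rho_{i+1}$ of the Euclidean diagonal span a simplex in the characteristic disk $\Delta$ (property~(1) of the Euclidean diagonal), and since the characteristic mapping $\mathcal{S}$ respects inclusions and sends spanned simplices of $\Delta$ to spanned simplices of $X$, a vertex path tracking the $\mathrm{CAT}(0)$ geodesic $\alpha$ through the disk descends under $\mathcal{S}$ to an edge-path in $X$ with $v_i \in \mathcal{S}(\rho_i) = \delta_i$.

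\textbf{Stitching across transitions.} The delicate part is the \emph{boundary} of each thick interval, i.e.\ the transitions between a thin layer and the adjacent thick interval. At index $j$ (the thin layer just before the thick interval starts) I would invoke condition~(2) of the Euclidean diagonal: the vertices $v_j, w_j, \sigma_{j+1}$ span a simplex, and analogously $v_k, w_k, \rho_{k-1}$ span a simplex at the far end. This guarantees that the vertex chosen in the thin layer $L_j$ (which equals $\mathrm{span}\{\sigma_j,\tau_j\}$, and in the disk corresponds to the edge $[v_j,w_j]$ collapsing to the point $v_j'=w_j'$) is adjacent to the first diagonal vertex inside the interval, and symmetrically at $k$. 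Concretely, I would trace the $\mathrm{CAT}(0)$ geodesic $\alpha$ in the modified disk $\Delta'$ from $v_j'=w_j'$ to $v_k'=w_k'$, reading off at each layer the vertex $v_i \in \rho_i$ closest to $\alpha\cap v_iw_i$, and appeal to the geometry of $\E$ (the equilateral triangulation) to verify that consecutive readings are adjacent; this is a routine planar adjacency check since $\alpha$ crosses consecutive parallel layer-lines of $\E$ and a $\mathrm{CAT}(0)$ geodesic in the plane cannot jump more than one lattice step between adjacent parallel lines spaced at the standard distance.

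\textbf{Main obstacle and conclusion.} I expect the main obstacle to be the transition steps at the thick-interval endpoints, where thin-layer data (the spans $\mathrm{span}\{\sigma_i,\tau_i\}$) must be matched coherently with the diagonal data $\mathcal{S}(\rho_i)$ pulled through the characteristic mapping; the risk is a mismatch where the natural thin-layer vertex is not adjacent to the natural diagonal vertex. Conditions~(1) and~(2) on the Euclidean diagonal are tailored precisely to close this gap, so the proof hinges on applying them correctly at every interval boundary and then concatenating the locally-adjacent selections into one global sequence. Once every consecutive pair $v_i,v_{i+1}$ is adjacent and every $v_i \in \delta_i \subset L_i$, the resulting edge-path has length $n$ from $v_0=x$ to $v_n=y$ with $d(x,y)=n$, so it is a geodesic, completing the proof. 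Since the construction of Euclidean geodesics and all the cited adjacency properties are quoted directly from \cite{OP}, I would keep this argument at the level of assembling those ingredients rather than reproving the planar geometry of $\E$.
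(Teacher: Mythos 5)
First, a point of comparison: the paper gives no proof of this proposition at all --- it is imported wholesale from \cite[Remark 3.1]{OP} (the citation in the statement is the ``proof''), so your attempt can only be judged on its own terms, not against an in-paper argument. Your global reduction is correct: since $\delta_i$ lies in the layer $L_i = S_i(x,X)\cap S_{n-i}(y,X)$, any selection $v_i\in\delta_i$ already satisfies $d(v_i,v_j)\geqslant |i-j|$, so it suffices to make consecutive choices adjacent. Your three local mechanisms are also the right ones: adjacency of consecutive directed-geodesic simplices for thin--thin transitions (choosing in the $\sigma_i$'s), property (1) of the Euclidean diagonal together with the fact that the characteristic mapping $\mathcal{S}$ carries simplices of $\Delta$ to simplices of $X$ inside a thick interval, and condition (2) (where $\sigma_{j+1}$ is a typo for $\rho_{j+1}$) at the two ends of an interval. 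The ``routine planar adjacency check'' you propose inside the disk is unnecessary, since property (1) already asserts exactly that.

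The genuine gap is the step you label ``concatenating the locally-adjacent selections into one global sequence''. Adjacency is arranged pairwise, but the vertex at layer $i$ must serve both transitions $(i-1,i)$ and $(i,i+1)$, and conditions (1)--(2) alone do not close this. Two additions are needed. First, at a thin layer $j$ bounding a thick interval, condition (2) lives in the disk; transported by a characteristic surface $S$ it gives adjacency of $S(\rho_{j+1})\subset\delta_{j+1}$ only to the specific cycle vertices $s_j=S(v_j)\in\sigma_j$ and $t_j=S(w_j)\in\tau_j$, so the thin-layer choice should be taken among these (taking $s_j\in\sigma_j$ keeps it compatible with thin--thin transitions and with $\delta_0=x$, $\delta_n=y$). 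Second --- and this is the configuration your proposal never confronts --- a thin layer $k$ can simultaneously be the right end of one thick interval $(j,k)$ and the left end of another $(k,k')$: the constraint from the left singles out the cycle vertices $s_k,t_k$ of the first interval, the constraint from the right singles out the cycle vertices $s_k',t_k'$ of the second, and nothing in (1)--(2) makes these choices compatible. The repair exists but must be stated: the admissible pairs $(s_k,t_k)$ at a layer are determined by that layer's data alone (any pair realising its thickness), and the choices at distinct layers of a cycle are independent, so the two cycles can be built using the \emph{same} pair at the shared layer; then $v_k:=s_k$ is adjacent both to $S(\rho_{k-1})$ and to $S'(\rho'_{k+1})$. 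With this coordination (and with one fixed characteristic surface per interval, so that thick-layer choices are genuine vertices $S(u_i)\in\delta_i$ rather than the simplices $\mathcal{S}(u_i)$), your outline becomes a complete proof.
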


Also note that Euclidean geodesics are symmetric with respect to their endpoints. We now present two theorems describing the crucial properties of Euclidean geodesics. 
The first one, roughly speaking, says that Euclidean geodesics are coarsely closed under taking subsegments. The second one is a coarse form of a $\mathrm{CAT}(0)$ inequality for Euclidean geodesics.

\begin{tw}\cite[Theorem B]{OP}\label{tw:euclideanisgood} Let $(\delta_i)_{i=0}^n$ be a Euclidean geodesic between vertices $x$ and $y$.
Take $j,k \in \{0, \ldots, n \}$ with $j<k$ and let $(r_i)_{i=j}^k$ be a geodesic such that $r_i \in \delta_i$ for $i \in \{j, \ldots, k \}$. Let $(\delta^{j,k}_i)_{i=j}^k$ denote the Euclidean geodesic between vertices $r_j$ and $r_k$. Then for every $i \in \{j, \ldots,k\}$ for any vertices $v_i \in \delta_i$ and $u_i \in \delta^{j,k}_i$ we have \begin{equation*}d(v_i, u_i) \leqslant C,\end{equation*} where $C>0$ is a universal constant.
\end{tw}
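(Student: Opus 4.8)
The plan is to prove the statement by reducing it to the local structure of the Euclidean geodesic construction, comparing the layer simplices $\delta_i$ of the big Euclidean geodesic $(\delta_i)_{i=0}^n$ with the layer simplices $\delta^{j,k}_i$ of the sub-Euclidean geodesic between $r_j$ and $r_k$. The first observation I would make is that both families live in the same system of layers: since $(r_i)_{i=j}^k$ is a geodesic with $r_i \in \delta_i \subset L_i$ and $r_j, r_k$ are its endpoints, the layers $S_{i-j}(r_j,X) \cap S_{k-i}(r_k,X)$ that govern the Euclidean geodesic between $r_j$ and $r_k$ coincide with (or are convex subcomplexes of) the original layers $L_i = S_i(x,X)\cap S_{n-i}(y,X)$ restricted between $r_j$ and $r_k$. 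This is the technical heart that lets me talk about $v_i$ and $u_i$ as lying in a common layer. I would establish this compatibility first, using convexity and $\infty$-largeness of layers from Section~\ref{subsec:layers}, together with the fact (Section~\ref{subsec:directedgeo}) that the directed geodesic is unique.

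Next I would split the argument according to whether the layers are thin or thick, exactly paralleling Definition~\ref{def:euclideangeo}. On thin layers $\delta_i = \mathrm{span}\{\sigma_i, \tau_i\}$ is determined by the (unique) directed geodesics, and the restriction of the directed geodesics to the subsegment essentially agrees with the directed geodesics between $r_j$ and $r_k$ up to a bounded error near the endpoints $j$ and $k$; since layers are convex and have uniformly bounded local geometry, $\sigma_i, \tau_i$ and their counterparts sit within bounded distance. The more delicate regime is the thick intervals, where $\delta_i = \mathcal{S}(\rho_i)$ comes from a $\mathrm{CAT}(0)$ geodesic $\alpha$ in the modified characteristic disk $\Delta'$. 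Here I would invoke the $\mathrm{CAT}(0)$ comparison for $\alpha$: if a thick interval $(j',k')$ for the big geodesic is truncated by the subsegment, the corresponding $\mathrm{CAT}(0)$ geodesic for the subproblem starts and ends at points lying on the same two boundary geodesics $v_iw_i$ within bounded distance of where $\alpha$ meets them, and two $\mathrm{CAT}(0)$ geodesics in $\Delta'$ with endpoints a bounded distance apart stay a bounded distance apart on each layer. Passing through the characteristic mapping $\mathcal{S}$, which is $1$-Lipschitz on each geodesic $v_iw_i$ (Section~\ref{subsec:geomchardisks}), transfers this bound from $\Delta'$ to $X$.

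The main obstacle I anticipate is bookkeeping at the \emph{boundaries} between the two constructions, namely where a thick interval of the large geodesic is cut by the endpoints $j$ or $k$ of the subsegment, or where the partition into thin/thick layers differs between the two problems. A layer that is thick for $(x,y)$ need not be thick for $(r_j,r_k)$ and vice versa, so the two Euclidean geodesics may use genuinely different thick intervals in an overlap region. To control this I would argue that any discrepancy is confined to a uniformly bounded neighbourhood of an interval endpoint: the thickness function is governed by the convex layers, and a change in the thick/thin classification forces the relevant simplices $\sigma_i,\tau_i$ (and hence the comparison geodesics) to be within distance $O(1)$. The universal constant $C$ would then be assembled as the maximum of the thin-layer bound, the $\mathrm{CAT}(0)$ thick-layer bound, and the boundary-matching bound, all of which are independent of $n$, $j$, $k$ because they depend only on the bounded local combinatorics of systolic complexes and on the intrinsic flatness of characteristic disks. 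I would close by noting that one may first prove the estimate for a single vertex $v_i \in \delta_i$ and $u_i \in \delta^{j,k}_i$ and then upgrade to arbitrary choices of such vertices using that $\mathrm{diam}(\delta_i)$ and $\mathrm{diam}(\delta^{j,k}_i)$ are uniformly bounded, since simplices in a uniformly locally finite complex have bounded diameter.
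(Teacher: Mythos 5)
This statement is not proved in the paper at all: it is Theorem B of Osajda--Przytycki \cite{OP}, imported as a black box, and its proof there is a long, delicate argument --- a large part of the technical machinery of \cite{OP} (stability of directed geodesics, characteristic disks and their diagonals under passing to subsegments) exists precisely to establish it. So your proposal must stand on its own, and it has genuine gaps. The central one: every comparison you make rests on the claim that the data of the sub-problem (the directed geodesics between $r_j$ and $r_k$, their thickness pattern, their characteristic disks) stays uniformly close to the corresponding data for $(x,y)$. You justify the thin-layer case by saying layers ``have uniformly bounded local geometry'', but layers are convex, $\infty$--large subcomplexes whose diameter grows with $n$; the fact that $v_i$ and $u_i$ lie in a common layer $L_i$ gives no distance estimate whatsoever. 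The natural tool for comparing directed geodesics, the Fellow Traveller Property, requires the two pairs of endpoints to be uniformly close, which fails here since $d(x,r_j)=j$ is unbounded. Thus your assertion that the directed geodesics from $r_j$ to $r_k$ agree with the restriction of those from $x$ to $y$ ``up to a bounded error near the endpoints'' is exactly the kind of statement the machinery of \cite{OP} is built to prove, not something available as an input.

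The second gap is your treatment of the thick/thin mismatch. You argue that a change in the classification ``forces the relevant simplices to be within distance $O(1)$'', but this conflates thickness --- the distance between $\sigma_i$ and $\tau_i$ for a \emph{single} pair of endpoints --- with the distance between the simplices of the two \emph{different} problems: a layer can be thin for $(r_j,r_k)$ and thick for $(x,y)$ without any a priori bound on the distance between $\mathrm{span}\{\sigma_i',\tau_i'\}$ and $\mathcal{S}(\rho_i)$. Similarly, in the thick case you treat the characteristic disk of the sub-problem as if it were a sub-disk of the big one, so that the two $\mathrm{CAT}(0)$ geodesics have endpoints a bounded distance apart; but the sub-problem's characteristic disk is a minimal surface spanned by an entirely different cycle, and relating the two disks and their Euclidean diagonals is the heart of the proof in \cite{OP}, where it occupies several sections. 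In short, your outline names the right objects (layer compatibility, the thin/thick dichotomy, $\mathrm{CAT}(0)$ convexity in flat disks, boundary effects), but at each step where the theorem's difficulty actually lies, it asserts the conclusion rather than proving it.
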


\begin{tw}\cite[Theorem C]{OP}\label{tw:euclideancontracting} 
Let $x, y$ and $\widetilde{y}$ be vertices of $X$ with $d(x,y)= n$ and $d(x,\widetilde{y})=m$. Let $(v_i)_{i=0}^n$ and $(\widetilde{v}_i)_{i=0}^m$ be geodesics such that for all appropriate~$i$~we have $v_i \in \delta_i$ and $\widetilde{v}_i \in \widetilde{\delta}_i$, where $(\delta_i)_{i=0}^n$ and $(\widetilde{\delta}_i)_{i=0}^m$ are the Euclidean geodesics between $x$ and $y$ and between $x$ and $\widetilde{y}$ respectively. Then for any $0 \leqslant c \leqslant 1$ we have \begin{equation*} d(v_{\ent{cn}},\widetilde{v}_{\ent{cm}} ) \leqslant c \cdot d(v_n, \widetilde{v}_m) +C,
\end{equation*}
where $C>0$ is a universal constant.
\end{tw}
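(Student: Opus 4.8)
The inequality is a coarse form of the $\mathrm{CAT}(0)$ convexity estimate $d(\gamma(ct),\gamma'(ct'))\leqslant c\, d(\gamma(t),\gamma'(t'))$ for two geodesics $\gamma,\gamma'$ issuing from a common point. The conceptual engine behind a proof is that the relevant two--dimensional pieces are genuinely nonpositively curved: a minimal surface spanning a cycle in $X$ is a $2$--dimensional systolic disk, and by Gromov's link condition such a disk, endowed with the piecewise--Euclidean equilateral metric, is $\mathrm{CAT}(0)$. Moreover, by the flatness discussed in Section~\ref{subsec:geomchardisks}, inside each thick interval the Euclidean geodesic is the image of a $\mathrm{CAT}(0)$ geodesic of a flat characteristic disk $\Delta\subset\E$, where the sharp (constant--free) convexity holds exactly. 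The plan is therefore to transport the Euclidean convexity of these flat models to $X$, with the loss encoded by the universal additive constant $C$.

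First I would reduce the range of $c$ to dyadic fractions and set up an iteration driven by Theorem~\ref{tw:euclideanisgood}. The point is that a subsegment $(v_i)_{i=j}^k$ of a geodesic selected from a Euclidean geodesic is, by Theorem~\ref{tw:euclideanisgood}, uniformly close to the Euclidean geodesic between $v_j$ and $v_k$; hence any estimate proved for the ``midpoint'' case $c=\tfrac12$ can be applied to the two halves, then to the quarters, and so on. Summing the resulting errors over the dyadic levels produces a convergent geometric series $\sum_i 2^{-i}C_0$, which yields a single universal constant $C$; the general $c\in[0,1]$ then follows by approximating by a dyadic rational and using that $i\mapsto d(v_i,\widetilde v_i)$ is $2$--Lipschitz in $i$. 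This reduces the theorem to the base estimate $d(v_{\ent{n/2}},\widetilde v_{\ent{m/2}})\leqslant \tfrac12\, d(y,\widetilde y)+C_0$, noting that $v_n=y$ and $\widetilde v_m=\widetilde y$.

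For the base estimate I would form the geodesic triangle with vertices $x$, $y$, $\widetilde y$, taking Euclidean geodesics along all three sides, and span it by a minimal surface $S\colon\Delta\to X$. As noted above, $\Delta$ is $\mathrm{CAT}(0)$, so passing to a comparison triangle in $\E\cong\mathbb{E}^2$ and invoking the $\mathrm{CAT}(0)$ midpoint inequality there bounds the distance in $\Delta$ between the midpoints of the two sides issuing from $x$ by half the length of the opposite side; applying the $1$--Lipschitz map $S$ (and the characteristic mapping of Section~\ref{subsec:charsurfaces}) transports this bound to $X$. The main obstacle, and the technical heart of the argument, is matching the combinatorial layer indexing of the Euclidean geodesics with $\mathrm{CAT}(0)$ arc length in $\Delta$: the boundary arc of $\Delta$ realising a side need not be a $\Delta$--geodesic, and a general minimal filling is only nonpositively curved rather than flat. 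I would control this discrepancy interval by interval, using the flatness of the characteristic disks (Sections~\ref{subsec:geomchardisks} and~\ref{subsec:euclideandiag}) inside the thick intervals, the triviality of thin layers, the convexity of layers, and Theorem~\ref{tw:euclideanisgood} to absorb the bounded mismatch between the two parametrizations into the constant $C$.
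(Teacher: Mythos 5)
This statement is not proved in the paper at all: it is Theorem~C of \cite{OP}, imported as a black box (Section~\ref{sec:constructgood} only sketches the construction entering its statement). So the comparison must be with the proof in \cite{OP}, and that proof is carried out inside the \emph{flat} characteristic disks attached to the pairs of vertices by the construction of Euclidean geodesics itself, together with the behaviour of directed geodesics and layers --- not inside an auxiliary minimal filling of the triangle $x$, $y$, $\widetilde y$. Your general framing (nonpositive curvature of minimal disks, flatness of characteristic disks) is correct as intuition, but the mechanism you propose is genuinely different from \cite{OP}, and it fails quantitatively.

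The failure is exactly at the point you call ``the technical heart'' and then dismiss as a bounded mismatch; it is not bounded, and it is not additive. First, in a general minimal filling $\Delta$ the boundary arcs are combinatorial geodesics of $\Delta$, but combinatorial geodesics in a $\mathrm{CAT}(0)$ equilateral disk can stray from the $\mathrm{CAT}(0)$ geodesic joining their endpoints by an amount \emph{proportional to their length}: already in $\E$, between $0$ and the point $a(e_1+e_2)$ (where $e_1,e_2$ are unit lattice vectors at $60^{\circ}$) there is a combinatorial geodesic through $ae_1$, which lies at Euclidean distance $a/2$ from the straight segment. The flatness of the characteristic disks of $(x,y)$ and $(x,\widetilde y)$ does nothing to control the filling disk of the triangle, so the discrepancy between $v_{\ent{cn}}$ and the point at parameter $c$ on the $\mathrm{CAT}(0)$ geodesic of $\Delta$ cannot be absorbed into $C$. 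Second, even where the comparison applies, converting the piecewise-Euclidean estimate back into the combinatorial metric costs a multiplicative factor: the two metrics on $\E$ are norms whose ratio depends on direction and reaches $2/\sqrt{3}$, so your chain of inequalities yields at best $d(v_{\ent{cn}},\widetilde v_{\ent{cm}}) \leqslant \tfrac{2}{\sqrt{3}}\, c\, d(v_n,\widetilde v_m) + C$. The coefficient being \emph{exactly} $c$ is the entire content of the theorem: with any factor $\lambda c$, $\lambda > 1$, the boundary theory collapses --- for instance Corollary~\ref{coro:doublecontracting} applies Theorem~\ref{tw:contracting} with $c = 1 - \tfrac{1}{n} \to 1$ and needs $c\, d(v_0,w_0) + D \leqslant d(v_0,w_0) + D$, which is false after multiplying by $\lambda > 1$ once $d(v_0,w_0)$ is large. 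The exact factor $c$ ultimately comes from the homogeneity of the hexagonal combinatorial norm on genuinely flat disks, which is precisely why \cite{OP} must define Euclidean geodesics through flat characteristic disks and run the argument there. A smaller, fixable defect: your dyadic iteration only reaches $c = 2^{-k}$, because for the right-hand halves the two subsegments no longer share an origin, and the common-origin midpoint estimate does not apply to them; you would need the base estimate for all $c$ directly (which your comparison argument would in any case supply, if it worked).
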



\subsection{Good geodesics}\label{subsec:goodgeo}
From now on let $C>0$ be a fixed constant which satisfies the assertions of both Theorem~\ref{tw:euclideanisgood} and Theorem~\ref{tw:euclideancontracting}. In particular this means that $C\geqslant 200$ (\cite[p.\ 2877]{OP}). Having an explicit lower bound will be needed in Section~\ref{sec:fixedpoints}.\medskip

Theorem~\ref{tw:euclideanisgood} presents a model behaviour, which motivates the definition of good ge\-o\-des\-ics.

\begin{de}[$C'$--good geodesic]
Let $(v_i)_{i=0}^n$ be a geodesic in $X$. For $j,k \in \{0, \ldots, n \}$ let $(\delta^{j,k}_i)_{i=j}^k$ denote the Euclidean geodesic between vertices $v_j$ and $v_k$. We say that $(v_i)_{i=0}^n$ is a $C'$--\emph{good geodesic} if for every two vertices $v_j $ and $v_k$, for every $i \in \{j, \ldots,k\}$  for any vertex $u_i \in \delta^{j,k}_i$ we have \begin{equation*}d(v_i, u_i) \leqslant C',\end{equation*} where $C'>0$ is a positive constant.
An infinite geodesic is a $C'$--good geodesic if every of its finite subgeodesics is a $C'$--good geodesic. Observe that for a $C'$--good geodesic any of its subgeodesics is a $C'$--good geodesic as well.
\end{de}

\begin{de}[Good geodesic]\label{def:goodgeo}
A geodesic $(v_i)_i$ (finite or infinite) is a \emph{good geodesic} if it is a $C$--good geodesic.
\end{de}

In particular, by Theorem~\ref{tw:euclideanisgood} any geodesic arising from Proposition~\ref{prop:euclideanisgeo} is a good geodesic. Consequently, any two vertices of $X$ can be joined by a good geodesic (cf.\ \cite[Corollary 3.3]{OP}).\medskip 

We finish this section with the following two remarks.

\begin{rem}By going through the steps of the construction, one observes that the directed, Euclidean, and good geodesics are preserved by simplicial automorphisms of $X$.
\end{rem}

\begin{rem}The main goal of the construction outlined in this section is to establish Theorem~\ref{tw:contracting}. This theorem plays the key role in showing various properties of the boundary in \cite{OP}. Theorem~\ref{tw:contracting} follows easily from Definition~\ref{def:goodgeo}, Proposition~\ref{prop:euclideanisgeo} and Theorem~\ref{tw:euclideancontracting}. In particular, the constant $D$ appearing in Theorem~\ref{tw:contracting} may be taken to be $3C$. 
\end{rem}

\section{Fixed points on the boundary}\label{sec:fixedpoints}

The purpose of this section is to show that every hyperbolic isometry $h$ of a systolic complex $X$ fixes a pair of points on the boundary. These two points, denoted by $h^{+\infty}$ and $h^{-\infty}$ are the canonical fixed points of $h$, in the sense that for any vertex $x\in X$ we have $(h^n \cdot x)_n \to h^{+\infty}$ and $(h^{-n} \cdot x)_n \to h^{-\infty}$ as $n \to \infty$ in $\overline{X}=X \cup \partial X$.

To obtain $h^{+\infty}$ and $h^{-\infty}$ we show that there exists a bi-infinite good geodesic $\gamma$ such that $\gamma$ and $h \cdot \gamma$ are asymptotic. In principal, one could expect a stronger result, namely the existence of an $h$--invariant good geodesic. However, there are examples of systolic complexes where a hyperbolic isometry has no invariant geodesics at all \cite[Example 1.2]{E2}. It is true though, that for every hyperbolic isometry $h$ there is a geodesic $\gamma$, such that $h \cdot \gamma$ and $\gamma$ are Hausdorff $1$--close \cite[Theorem 1.3]{E2}. Unfortunately, in our construction the distance between $h \cdot \gamma$ and $\gamma$ depends on $\trol{h}$.

\begin{tw}\label{tw:fixedpoints}Let $X$ be a systolic complex on which a group $G$ acts geometrically. Let $h \in G$ be a hyperbolic isometry. Then either there exists a bi-infinite good geodesic which is contained in $\disp{K}{h}$ for some $K=K(h)$ and $\disp{K}{h}$ is $h$--cocompact, or there exists an $h$--invariant good geodesic.
\end{tw}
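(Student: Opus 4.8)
The plan is to produce a bi-infinite good geodesic that is \emph{almost $h$--invariant}, in the sense of lying inside a single displacement set $\disp{K}{h}$, and only afterwards to decide, according to the geometry of $\disp{K}{h}$, whether this already gives the first alternative or must be upgraded to a genuinely invariant geodesic. The whole construction rests on one uniformity statement, which I would prove first: there is a constant $K=K(h)$, depending only on $\trol{h}$ and the universal constant $D$ of Theorem~\ref{tw:contracting}, such that every good geodesic joining two vertices of $\minset{h}$ is contained in $\disp{K}{h}$.

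To prove this, let $\gamma=(\gamma(0)=x,\dots,\gamma(n)=y)$ be a good geodesic with $x,y\in\minset{h}$, so $d(x,h\cdot x)=d(y,h\cdot y)=\trol{h}$. Since $h$ preserves good geodesics, $h\cdot\gamma$ is a good geodesic from $h\cdot x$ to $h\cdot y$ with $(h\cdot\gamma)(i)=h\cdot\gamma(i)$; hence bounding $d(\gamma(i),h\cdot\gamma(i))=d_h(\gamma(i))$ for all $i$ is exactly what forces $\gamma\subset\disp{K}{h}$. I would obtain this bound by bridging: choose a good geodesic $\beta$ from $x$ to $h\cdot y$ and apply Theorem~\ref{tw:contracting} once to the pair $\gamma,\beta$ (common initial vertex $x$) and once to the reversed pair $\overline{h\cdot\gamma},\overline{\beta}$ (common initial vertex $h\cdot y$). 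As the endpoint distances $d(y,h\cdot y)$ and $d(x,h\cdot x)$ are at most $\trol{h}$, and the lengths of $\gamma,\beta,h\cdot\gamma$ differ by at most $\trol{h}$, both applications give a fellow-travelling estimate that is independent of $n$; the small index mismatches are absorbed using that $\beta$ is a geodesic, and the triangle inequality then yields $d(\gamma(i),h\cdot\gamma(i))\leqslant 4\trol{h}+2D+2$. (Alternatively one can read this bound off directly from the construction of good geodesics in Section~\ref{sec:constructgood}.)

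Next I would build the bi-infinite good geodesic. Fix $O\in\minset{h}$; since $\minset{h}$ is $h$--invariant, each $\gamma_n:=[\![h^{-n}\cdot O,h^{n}\cdot O]\!]$ joins two vertices of $\minset{h}$, hence lies in $\disp{K}{h}$ by the previous step, and has length tending to infinity. Let $m_n$ be a midpoint of $\gamma_n$. By Theorem~\ref{tw:centraliser} the centraliser $\cen{h}$ acts cocompactly on $\disp{K}{h}$, so after translating by suitable $c_n\in\cen{h}$ the vertices $c_n\cdot m_n$ lie in a fixed finite set; note $c_n\cdot\gamma_n$ is again a good geodesic contained in $c_n\cdot\disp{K}{h}=\disp{K}{h}$, with both of its halves still of length tending to infinity. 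Pigeonhole together with uniform local finiteness then permits the usual diagonal extraction, producing a bi-infinite limit $\gamma_\infty$; it lies in the subcomplex $\disp{K}{h}$ and is a good geodesic because each of its finite subsegments coincides, for $n$ large, with a subsegment of some $c_n\cdot\gamma_n$.

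Finally I would split into cases according to whether $\disp{K}{h}$ is $h$--cocompact. If it is, then $\gamma_\infty$ together with this cocompactness is the first alternative (and one may equally anchor the limit above using powers of $h$ instead of general elements of $\cen{h}$). If $\disp{K}{h}$ is not $h$--cocompact, then $\cen{h}$ is not virtually cyclic, so by \cite[Corollary 5.8]{OsaPry} it is commensurable with $F_n\times\mathbb{Z}$ with $n\geqslant1$, and $h$ lies, up to commensurability, in the central $\mathbb{Z}$--factor; exploiting this transverse direction one locates a flat $F\cong\E$ preserved by $h$ on which $h$ acts as a translation, and the straight line in $F$ along the translation direction is then an $h$--invariant good geodesic, via the criterion (established in this section) that a geodesic which is good inside a flat is good in $X$. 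I expect this last case to be the main obstacle: upgrading the almost-invariant geodesic to a genuinely invariant one forces one out of the purely coarse setting and into an analysis of good geodesics inside flats, whereas the uniform displacement bound and the limit construction are comparatively routine.
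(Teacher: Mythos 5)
Your proposal is correct, and its central mechanism is genuinely different from the paper's proof. The paper's key step (Lemma~\ref{lem:euclideanisclosetomin}) shows that the \emph{Euclidean} geodesic between two vertices of $\minset{h}$ lies in $\disp{K}{h}$ with $K=9\cdot\trol{h}+6$, and its proof descends into the machinery of Section~\ref{sec:constructgood}: directed geodesics, the Fellow Traveller Property, characteristic surfaces, and $\infty$--largeness of layers. You instead prove the displacement bound for \emph{good} geodesics directly, by bridging with a third good geodesic $\beta$ from $x$ to $h\cdot y$ and applying Theorem~\ref{tw:contracting} twice; this is legitimate (reversing a good geodesic preserves goodness because Euclidean geodesics are symmetric in their endpoints --- the paper itself reverses direction in the proof of Corollary~\ref{coro:doublecontracting}), the index mismatch between the two applications is indeed at most $1$, and the argument even gives the sharper constant $K=2\trol{h}+2D+1$. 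Your limit construction also differs: the paper anchors the Euclidean geodesics between $h^{-n}\cdot x$ and $h^{n}\cdot x$ using $h$--cocompactness of $\disp{K}{h}$ (available only in its first case), takes a limit of sequences of simplices, and then invokes Proposition~\ref{prop:euclideanisgeo} and Theorem~\ref{tw:euclideanisgood} to turn that limit into a good geodesic; you anchor by translating with elements of $C_G(h)$, whose action on $\disp{K}{h}$ is cocompact unconditionally (Theorem~\ref{tw:centraliser}) and which preserve $\disp{K}{h}$, and you take limits of good geodesics themselves, so the bi-infinite good geodesic exists in both cases and the $h$--cocompactness dichotomy merely decides which alternative one lands in. What each approach buys: yours bypasses Lemma~\ref{lem:euclideanisclosetomin} entirely, is softer, and yields as a by-product a quasi-convexity statement for $\minset{h}$ with respect to all good geodesics; the paper's route works with the canonical (unique) Euclidean geodesics, which makes the pigeonhole on simplices in a fixed ball immediate and produces a constant not involving the large universal constant $D$. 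Your second case is essentially the paper's argument, quoting the same ingredients (\cite[Corollary 5.8]{OsaPry}, the Flat Torus Theorem, and the flat criteria of this section); the only imprecision is that ``the straight line in $F$'' must be replaced by an $h$--invariant \emph{combinatorial} geodesic Hausdorff $1$--close to the $\mathrm{CAT}(0)$ axis, to which one applies first Lemma~\ref{lem:cat0inflatisgood} (goodness inside the flat) and then Lemma~\ref{lem:goodinflatisgood} (goodness in $X$), checking that the resulting constant $\frac{4}{\sqrt{3}}+11$ stays below $C$.
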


We now state and prove the main result of this section assuming Theorem~\ref{tw:fixedpoints}.

\begin{prop}\label{prop:fixedpointsconvergence}
Let $G$ be a group acting geometrically on a systolic complex $X$ and let $h \in G$ be a hyperbolic isometry. Then:

\begin{enumerate}

		\item there exist points $h^{-\infty}$ and $ h^{+\infty}$ in the boundary $\partial X$ which are fixed by $h$,

			\item for any vertex $x \in X$ we have $(h^n\cdot x)_n \to h^{+\infty}$ and $(h^{-n} \cdot x)_n \to h^{-\infty}$ as $n \to \infty$ in the compactification $\overline{X} = X \cup \partial_O X$, where $O \in X$ is some base vertex.
\end{enumerate}
\end{prop}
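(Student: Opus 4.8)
\textbf{The plan} is to derive Proposition~\ref{prop:fixedpointsconvergence} from Theorem~\ref{tw:fixedpoints} by handling its two cases in a uniform manner. In both cases we are handed a bi-infinite good geodesic $\gamma = (\ldots, \gamma(-1), \gamma(0), \gamma(1), \ldots)$ which is ``almost $h$--invariant'' in the sense that $h \cdot \gamma$ stays within bounded distance of $\gamma$ (in the invariant case the distance is $0$; in the displacement-set case, since both $\gamma$ and $h \cdot \gamma$ lie in $\disp{K}{h}$ on which $h$ acts cocompactly, the Hausdorff distance between $\gamma$ and $h\cdot\gamma$ is bounded by a constant depending only on $h$). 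I would first split $\gamma$ at the basepoint into two good geodesic rays, a forward ray $\gamma^+ = (\gamma(0), \gamma(1), \ldots)$ and a backward ray $\gamma^- = (\gamma(0), \gamma(-1), \gamma(-2), \ldots)$, and set $h^{+\infty} = [\gamma^+]$ and $h^{-\infty} = [\gamma^-]$ in $\partial X$ (after transporting to the basepoint $O$ via the canonical bijection $\partial X \to \partial_O X$).

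The \textbf{first step} is to show that these two points are fixed by $h$. Since $h$ maps good geodesic rays to good geodesic rays, $h \cdot \gamma^+$ is again a good geodesic ray, and it suffices to show $[h \cdot \gamma^+] = [\gamma^+]$. For this I would argue that $h \cdot \gamma^+$ and $\gamma^+$ are asymptotic, i.e.\ remain within bounded distance of one another. In the $h$--invariant case this is immediate since $h \cdot \gamma = \gamma$, so $h \cdot \gamma^+$ is a subray of $\gamma$ shifted by $\trol{h}$, which fellow-travels $\gamma^+$. In the displacement-set case, for each $i$ the vertex $h \cdot \gamma(i)$ lies within the bounded Hausdorff distance of $\gamma$; combined with the fact that good geodesics satisfy the coarse contracting inequality of Theorem~\ref{tw:contracting} (and hence its Corollary~\ref{coro:doublecontracting}), one concludes that the index at which $h \cdot \gamma(i)$ is approximated by $\gamma$ grows linearly with $i$, forcing $h \cdot \gamma^+$ and $\gamma^+$ to stay uniformly close. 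Thus $[h \cdot \gamma^+] = [\gamma^+]$, i.e.\ $h \cdot h^{+\infty} = h^{+\infty}$, and symmetrically $h \cdot h^{-\infty} = h^{-\infty}$, proving part~(1).

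The \textbf{second step}, proving the convergence statement~(2), is where the main work lies. Fix a vertex $x$; I want to show $(h^n \cdot x)_n \to h^{+\infty}$ in $\overline{X}$. Unwinding the definition of the topology on $\overline{X}$ from \cite{OP}, convergence to the boundary point $h^{+\infty} = [\gamma^+]$ amounts to showing that good geodesics from $O$ to $h^n \cdot x$ agree with an initial segment of $\gamma^+$ on longer and longer initial intervals as $n \to \infty$ (more precisely, they fellow-travel $\gamma^+$ up to a length tending to infinity). The key estimate is that $d(h^n \cdot x, \gamma)$ is bounded independently of $n$: since $x$ lies within bounded distance of $\minset{h} \subset \disp{K}{h}$, and $\disp{K}{h}$ is $h$--invariant, each $h^n \cdot x$ lies in a bounded neighbourhood of $\gamma$. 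Moreover $d(O, h^n \cdot x) \to \infty$ by properness of the action together with $h$ having infinite order, and the ``progression'' of $h^n \cdot x$ along $\gamma$ is in the positive direction because $h$ translates $\gamma$ by a positive amount. Applying the coarse contracting property (Theorem~\ref{tw:contracting} and Corollary~\ref{coro:doublecontracting}) to a good geodesic from $O$ to $h^n \cdot x$ and to $\gamma^+$, I would show that these two good geodesics fellow-travel on an initial segment whose length grows with $d(O, h^n \cdot x)$, which is exactly the required convergence. The $h^{-\infty}$ case is symmetric.

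The \textbf{main obstacle} I anticipate is translating the metric statements into the topology of $\overline{X}$ faithfully: the topology on the systolic compactification is delicate, and I must be careful to use the actual neighbourhood basis at a boundary point (phrased in terms of good geodesics fellow-travelling) rather than a naive $\mathrm{CAT}(0)$-style cone-neighbourhood intuition. A secondary subtlety is controlling, in the displacement-set case, the linear growth of the ``matching index'' between $h\cdot\gamma$ and $\gamma$: one must ensure that the bounded Hausdorff distance, when fed through Theorem~\ref{tw:contracting}, genuinely yields a uniform bound on $d(\gamma^+(i), (h\cdot\gamma^+)(i))$ for all $i$, rather than a bound that degrades with $i$. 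This is precisely where Corollary~\ref{coro:doublecontracting} is designed to help, since it upgrades ``asymptotic'' (bounded distance for all $i$) to the explicit uniform bound $d(v_0,w_0) + 2D + 1$, and I would lean on it to close the argument cleanly.
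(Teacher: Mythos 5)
Your proposal follows essentially the same route as the paper's proof: invoke Theorem~\ref{tw:fixedpoints}, split the resulting bi-infinite good geodesic $\gamma$ at a base vertex into two good geodesic rays defining $h^{+\infty}$ and $h^{-\infty}$, and deduce convergence from the two estimates that $d(h^n\cdot x,\gamma)$ stays bounded while $d(O,h^n\cdot x)\to\infty$. However, two points in the execution need repair.

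First, in the displacement-set case your fixed-point step is both needlessly roundabout and, as written, circular. You propose to establish that $\gamma^+$ and $h\cdot\gamma^+$ are index-by-index close by feeding a bounded Hausdorff distance through Theorem~\ref{tw:contracting} and Corollary~\ref{coro:doublecontracting}; but the hypothesis of Corollary~\ref{coro:doublecontracting} is precisely $[\eta]=[\xi]$, i.e.\ the index-by-index boundedness you are trying to prove, so it cannot be the tool that establishes it. The paper's argument is one line and avoids the detour entirely: since $\gamma\subset\disp{K}{h}$, every vertex satisfies $d(\gamma(i),h\cdot\gamma(i))\leqslant K$, and this is already the required index-by-index bound, giving $[\gamma^{\pm}]=[h\cdot\gamma^{\pm}]$ directly from Definition~\ref{def:boundary}. (Even if one insists on starting from Hausdorff closeness, the upgrade to index-wise bounds is an elementary triangle-inequality argument using that both rays are geodesics with nearby origins; no contracting property is needed.)

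Second, your anticipated ``main obstacle'' — translating the metric estimates into the topology of $\overline{X}$ — is short-circuited in the paper by quoting a ready-made criterion from \cite{OP} (Definition 4.1): a sequence $(x_n)$ converges to $[\xi]\in\partial_O X$ in $\overline{X}$ provided there exist vertices $v_n\in\xi$ with $d(v_n,O)\to\infty$ and $d(v_n,x_n)$ uniformly bounded. These are exactly the two estimates you produce (in the displacement case one takes $v_n\in\gamma^+$ near $h^n\cdot O$, using $h$--cocompactness of $\disp{K}{h}$ together with bi-infiniteness of $\gamma$ to get $\disp{K}{h}\subset B_R(\gamma,X)$ — a containment you should state explicitly, since $h$--invariance of $\disp{K}{h}$ alone does not yield it). So no re-derivation of the neighbourhood basis via Theorem~\ref{tw:contracting} is necessary. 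With these two repairs your plan coincides with the paper's proof.
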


Since for any two vertices $O, O' \in X$ there is a homeomorphism between $X \cup \partial_OX$ and $X \cup \partial_{O'}X$ (see Subsection~\ref{subsec:boundaries}), the choice of $O$ does not really matter. In order to simplify the argument we will choose $O$ during the proof.

\begin{proof}
To show that a sequence $(x_n)_{n=0}^{\infty}$ converges to a point $[\xi] \subset \partial_OX$ in $\overline X$, it is enough to find a sequence $(v_n)_{n=0}^{\infty} \subset \xi$, such that $d(v_n, O) \to \infty$ as $n \to \infty$ and such that $d(v_n, x_n)$ is uniformly bounded (see \cite[Definition~4.1]{OP}).

By Theorem~\ref{tw:fixedpoints} there either exists a bi-infinite good geodesic $\gamma \subset \disp{K}{h}$ and $\disp{K}{h}$ is $h$--cocompact, or there exists an $h$--invariant good geodesic $\gamma$. We first give the proof assuming that there exists a good geodesic $\gamma$ which is $h$--invariant.

Choose a vertex $O \in \gamma$ and parametrise vertices of $\gamma$ by integers such that $\gamma(0)= O$ and $h \cdot \gamma(0)= \gamma(L(h))$. Then $\gamma$ splits into two good geodesic rays $\gamma\big|_{[0, +\infty]}$ and $\gamma\big|_{[0,-\infty]}$
starting at $O$. Define $h^{+\infty} = [\gamma\big|_{[0, +\infty]}]$ and  $h^{-\infty} = [\gamma\big|_{[0, -\infty]}]$. 
Since $\gamma$ is $h$--invariant, for every $i \geqslant 0$ we have 
\[d(\gamma\big|_{[0,\pm\infty]}(i), h \cdot \gamma\big|_{[0,\pm\infty]}(i)) =L(h),\] and thus both $h^{+\infty}$ and $h^{-\infty}$ are fixed by $h$. 

Let $x \in X$ be an arbitrary vertex and let $M=d(x,O)$. For any $n \geqslant0$ we have:
\begin{enumerate}
 \item $h^n \cdot O \in \gamma\big|_{[0, +\infty]}$, 
 \item $d(h^n \cdot O,O)  \to \infty$ as $ n \to \infty$,
  \item $d(h^n \cdot x, h^n \cdot O)=M$.
 \end{enumerate}
 This implies that $h^n \cdot x \to [\gamma\big|_{[0, +\infty]}]= h^{+\infty}$ and $h^{-n} \cdot x \to [\gamma\big|_{[0, -\infty]}] = h^{-\infty}$.\medskip

In the case where $\gamma \subset \disp{K}{h}$ is a bi-infinite good geodesic and $\disp{K}{h}$ is $h$--cocompact we proceed similarly as above. Choosing a vertex $O \in \gamma$ splits $\gamma$ into two good geodesic rays. We denote them by $\gamma\big|_{[0, +\infty]}$ and $\gamma\big|_{[0,-\infty]}$, even though we did not specify how we choose an orientation of $\gamma$ (it will become clear from the proof). Consequently, let $h^{+\infty} = [\gamma\big|_{[0, +\infty]}]$ and  $h^{-\infty} = [\gamma\big|_{[0, -\infty]}]$. Both $\gamma\big|_{[0, +\infty]}$ and $\gamma\big|_{[0,-\infty]}$ are contained in $\disp{K}{h}$ and therefore for every $i \geqslant 0 $ we have \[d(\gamma\big|_{[0,\pm\infty]}(i), h \cdot \gamma\big|_{[0,\pm\infty]}(i)) \leqslant K,\] and hence $h$ fixes both $h^{+\infty}$ and $h^{-\infty}$.

Let $x \in X$ be an arbitrary vertex and let $M=d(x,O)$. Since $\disp{K}{h}$ is $h$--cocompact and $\gamma$ is bi-infinite it follows that there exists $R>0$ such that \[\disp{K}{h} \subset B_R(\gamma,X).\] Consider the sequence $(h^n \cdot O)_n$. We do not necessarily have  $(h^n \cdot O)_n \subset \gamma\big|_{[0, \infty]}$, but for any $n \geqslant 0$ there exists a vertex $v_n \in \gamma\big|_{[0, \infty]}$ with $d(h^n \cdot O,v_n) \leqslant R$. Then, by the triangle inequality we have $d(O, v_n) \to \infty$ as $n \to \infty$ since $d(O, h^n \cdot O) \to \infty$ as $n \to \infty$. 
Finally, for any $n \geqslant 0$ we have  \[d( h^n \cdot x, v_n) \leqslant d(h^n \cdot x, h^n \cdot O)+ d(h^n \cdot O, v_n) \leqslant M+R,\] and therefore $h^n \cdot x \to [\gamma\big|_{[0, +\infty]}]= h^{+\infty}$ and $h^{-n} \cdot x \to [\gamma\big|_{[0, -\infty]}] = h^{-\infty}$.\end{proof}

It remains to prove Theorem~\ref{tw:fixedpoints}. Since the proof is fairly long we outline it first. 

\subsubsection*{Outline of the proof of Theorem~\ref{tw:fixedpoints}} Observe that the action of $h$ on $X$ preserves $\minset{h}$. We consider two cases: when $\minset{h}$ is $h$--cocompact and when it is not $h$--cocompact. These two cases will lead to the two respective claims of the theorem.

In the first case we show in Lemma~\ref{lem:euclideanisclosetomin} that for any two vertices $x,y \in \minset{h}$ the Euclidean geodesic between these vertices is contained in $\disp{K}{h}$ for some $K>0$. This is achieved by showing that both directed geodesics between these vertices, and characteristic disks spanned by those geodesics, belong to $\disp{K}{h}$. The main tool in the proof of Lemma~\ref{lem:euclideanisclosetomin} is the Fellow Traveller Property of directed geodesics. Then we construct the desired good geodesic, roughly, as a limit of good geodesics between vertices $h^{-n} \cdot x $ and $ h^n \cdot x$ for a fixed vertex $x\in \minset{h}$.

In the second case, since $C_G(h)$ acts geometrically on $\minset{h}$, we deduce that there is a hyperbolic isometry $g$ that commutes with $h$, such that $\langle g,h \rangle \cong \mathbb{Z}^2$. Thus by the systolic Flat Torus Theorem there is a flat $F \subset X$ on which the subgroup $\langle g,h\rangle$ acts by translations. Then using Lemma~\ref{lem:cat0inflatisgood} we find an $h$--invariant $C'$--good geodesic inside $F$ (treated as a systolic complex on its own), for a certain $C'>0$. By Lemma~\ref{lem:goodinflatisgood} any $C'$--good geodesic in $F$ is a $(C'+10)$--good geodesic in $X$. In the above procedure we are able to choose $C'$ so that $C'+10$ is less than $C$ and therefore the constructed $(C'+10)$--good geodesic is a good geodesic in $X$.

Before giving the proof of Theorem~\ref{tw:fixedpoints} we state and prove the three lemmas mentioned above.

\begin{lem}\label{lem:euclideanisclosetomin} Consider two vertices $x,y \in \minset{h} \subset X$ and let $(\delta_i)_{i=0}^{n}$ be the Euclidean geodesic between $x$ and $y$. Then we have $(\delta_i)_{i=0}^{n} \subset \disp{K}{h}$, where $K=9 \cdot \trol{h}+6$.
\end{lem}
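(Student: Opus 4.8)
The goal is to show that the Euclidean geodesic $(\delta_i)_{i=0}^n$ between two vertices $x,y\in\minset{h}$ stays inside a fixed $K$--displacement set of $h$, with $K$ depending only on $\trol{h}$ (explicitly $K=9\trol{h}+6$), \textbf{independently of $d(x,y)$}. The strategy is to control each ingredient of the construction of the Euclidean geodesic (directed geodesics, characteristic disks, and the Euclidean diagonal inside them) by bounding its displacement under $h$, and the key mechanism will be the Fellow Traveller Property for directed geodesics together with the fact that $h$ maps the directed geodesic from $x$ to $y$ to the directed geodesic from $h\cdot x$ to $h\cdot y$.

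\medskip

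\noindent\textbf{Plan.} First I would set up the comparison between the directed geodesic $(\sigma_i)$ from $x$ to $y$ and its image under $h$, namely the directed geodesic from $h\cdot x$ to $h\cdot y$; since $x,y\in\minset{h}$, the endpoints satisfy $d(x,h\cdot x)=d(y,h\cdot y)=\trol{h}$. Because directed geodesics are preserved by the simplicial automorphism $h$ (and are unique), $h\cdot(\sigma_i)$ is exactly the directed geodesic from $h\cdot x$ to $h\cdot y$. The Fellow Traveller Property for directed geodesics whose endpoints are close then bounds $d(\sigma_i, h\cdot\sigma_i)$ by a constant multiple of $\trol{h}$ for every layer index $i$, which shows each vertex of $\sigma_i$ lies in $\disp{K_1}{h}$ for some $K_1 = O(\trol{h})$. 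The same applies to the reverse directed geodesic $(\tau_i)$. This is the heart of the argument and is where I expect the main obstacle: extracting a clean, uniform fellow-travelling bound for directed geodesics in terms of the distance between their endpoints, and checking that the relevant constant is genuinely independent of the length $n$.

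\medskip

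\noindent\textbf{Controlling the characteristic disks.} Once the two directed geodesics $(\sigma_i)$ and $(\tau_i)$ are known to lie in $\disp{K_1}{h}$, I would bound the thickness of each thick layer: the thickness of $L_i$ is $d(s_i,t_i)$ with $s_i\in\sigma_i$, $t_i\in\tau_i$, and since both simplices are within $O(\trol{h})$ of their $h$--images and lie in a common (convex) layer, the thickness is itself $O(\trol{h})$. A thick interval therefore spans a bounded $\mathrm{CAT}(0)$ width, so the characteristic disk $\Delta$ for that interval is a flat Euclidean strip of bounded width. Every vertex of the Euclidean diagonal $\rho_i$ lies on the geodesic $v_iw_i$, whose image under any characteristic surface sits in the layer $L_i$ between $s_i$ and $t_i$; hence $\mathcal{S}(\rho_i)$ is within the bounded thickness of $\sigma_i$, and so within $O(\trol{h})$ of a point of $\disp{K_1}{h}$. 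The triangle inequality then places $\delta_i=\mathcal{S}(\rho_i)$ in $\disp{K}{h}$ for the claimed $K$. The thin-layer case is immediate since there $\delta_i=\mathrm{span}\{\sigma_i,\tau_i\}$, already contained in the bounded neighbourhood just established.

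\medskip

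\noindent\textbf{Assembling the constant.} Finally I would track the additive contributions: each of the two directed geodesics contributes a displacement of at most something like $3\trol{h}$ under the fellow-travelling estimate, the thickness of the interval contributes a further $\trol{h}$-order term, and passing from $\sigma_i$ to the diagonal simplex $\mathcal{S}(\rho_i)$ across the layer adds the thickness again, together with the small additive constants ($+6$) coming from the edge midpoints and the discreteness of the diagonal. Summing these yields $d(\delta_i, h\cdot\delta_i)\leqslant 9\trol{h}+6=K$, which is exactly the bound in the statement and, crucially, does not involve $n=d(x,y)$. I expect the bookkeeping of these constants to be routine once the fellow-travelling bound for directed geodesics is in hand; the conceptual obstacle is entirely concentrated in that first fellow-travelling estimate.
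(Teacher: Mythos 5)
Your first step coincides with the paper's: since directed geodesics are unique and preserved by the simplicial automorphism $h$, the Fellow Traveller Property applied to $(\sigma_i)_{i=0}^n$ and $(h\cdot\sigma_i)_{i=0}^n$ gives, for every vertex $s\in\sigma_i$, the bound $d(s,h\cdot s)\leqslant 3\max\{d(x,h\cdot x),d(y,h\cdot y)\}+1=3\trol{h}+1=:K'$, and likewise for $(\tau_i)_{i=0}^n$; the thin-layer case then follows immediately. The genuine gap is in your treatment of thick layers. You claim that because $s_i\in\sigma_i$ and $t_i\in\tau_i$ both have displacement $O(\trol{h})$ and lie in a common convex layer, the thickness $d(s_i,t_i)$ is itself $O(\trol{h})$. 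This is false: small displacement of two vertices says nothing about the distance between them. Take $h$ to be a translation of the systolic plane $\E$; then $\minset{h}=\E$, yet for $x,y$ far apart in a generic direction the two directed geodesics bound a hexagonal region (this is exactly the configuration in Figure~\ref{fig:genericposition}) whose middle layers have thickness growing linearly in $d(x,y)$, independently of $\trol{h}$. Consequently the characteristic disks are not strips of bounded width, and your triangle-inequality step, which places $\mathcal{S}(\rho_i)$ within a bounded distance of $\sigma_i$, collapses; the constant you assemble at the end is reverse-engineered from a false premise.

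The paper's proof never bounds the thickness at all. Instead, it observes that any vertex $z\in\delta_i$ lies on $\alpha=S(v_iw_i)$ for some characteristic surface $S$, which is a geodesic inside the layer joining $s_i=S(v_i)\in\sigma_i$ to $t_i=S(w_i)\in\tau_i$, and then applies the Fellow Traveller Property a \emph{second} time, now to the directed geodesic from $s_i$ to $t_i$ and its $h$--image. Since $s_i,t_i\in\disp{K'}{h}$, every vertex $u_j$ of a geodesic threading that directed geodesic satisfies $d(u_j,h\cdot u_j)\leqslant 3K'+1$; the crucial point is that this bound depends only on the displacements of the endpoints, not on the (possibly huge) distance $d(s_i,t_i)$. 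Finally, because layers are convex and $\infty$--large, any two geodesics in a layer with the same endpoints are Hausdorff $1$--close, so $\alpha$ and $(u_j)$ are $1$--close and hence $d(z,h\cdot z)\leqslant 3K'+3=9\trol{h}+6$. This nested application of the Fellow Traveller Property inside each thick layer is the idea missing from your proposal; without it, the unboundedness of the layer thickness is fatal to the argument.
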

\begin{proof}
Let $(\sigma_i)_{i=0}^{n}$ be a directed geodesic from $x$ to $y$ (i.e.,\ $\sigma_0=x$ and $\sigma_n=y$). Then, by the Fellow Traveller Property [JS06, Proposition 11.2] applied to directed geodesics $(\sigma_i)_{i=0}^{n}$ and $(h \cdot \sigma_i)_{i=0}^{n}$, for each $i\in \{0, \ldots,n\}$ for any vertex $s \in \sigma_i$ we have \[d(s, h \cdot s) \leqslant 3 \cdot \mathrm{max}\{d(x, h\cdot x), d(y, h\cdot y)\} +1= 3\cdot \trol{h}+1, \]
since $x,y \in \minset{h}.$ Put $K'=3\cdot \trol{h}+1$. By the above inequality we get that $(\sigma_i)_{i=0}^{n}\subset \disp{K'}{h}$.

Now let $(\sigma_i)_{i=0}^{n}$ be as above, and let $(\tau_i)_{i=0}^{n}$ be the directed geodesic from $y$ to $x$ (see Convention~\ref{conv:directionconvention}). 
Clearly, by the argument above, we also have $(\tau_i)_{i=0}^{n}\subset \disp{K'}{h}$.
If for some $i \in \{0, \ldots, n\}$ the layer $L_i$ is thin, then by definition $\delta_i= \mathrm{span} \{\sigma_i, \tau_i\}$ and therefore it is contained in $\disp{K'}{h}$, since both $\sigma_i$ and $\tau_i$ are so. If the layer $L_i$ is thick then we proceed as follows. 

Take any vertex $z \in \delta_i$. We claim that there exist vertices $s_i \in \sigma_i$ and $t_i \in \tau_i$ and a geodesic $\alpha$ between $s_i$ and $t_i$, such that $z$ lies on $\alpha$. Indeed, consider a thick interval that contains $i$, let $\Delta$ be an appropriate characteristic disk and let $v_iw_i$ be a geodesic in $\Delta$ that is the layer $i$ in $\Delta$. Any characteristic surface $S \colon \Delta \to X$ restricted to $v_iw_i$ is an isometric embedding. Moreover, any vertex of $\delta_i$ lies in the image $S(v_iw_i)$ for some such surface. Take a surface $S \colon \Delta \to X$ such that $z \in S(v_iw_i)$ and put $s_i=S(v_i)$, $t_i=S(w_i)$ and let $\alpha=S(v_iw_i)$. This proves the claim.

Put $m= d(s_i,t_i)$ and let $(\rho_j)_{j=0}^m$ be a directed geodesic from $s_i$ to $t_i$ (actually, here the direction is not important).
Let $(u_j)_{j=0}^m$ be a geodesic such that $u_j \in \rho_j$ for every $j \in \{0, \ldots, m\}$ (in particular $u_0=s_i$ and $u_m=t_i$).

Since the layer $L_i$ is convex, both $\alpha$ and $(u_j)_{u=0}^m$ are contained in $L_i$. Since $L_i$ is $\infty$--large, any two geodesics with the same endpoints are Hausdorff $1$--close \cite[Lemma 2.3]{JS2}.  Therefore $\alpha$ and $(u_j)_{j=0}^m$ are $1$--close. Finally, by the Fellow Traveller Property (applied to $(\rho_j)_{j=0}^m$ and $(h \cdot \rho_j)_{j=0}^m$) for any $j \in \{0, \ldots, m\}$ we have \[d(u_j, h \cdot u_j) \leqslant 3 \cdot \mathrm{max}\{d(s_i, h\cdot s_i), d(t_i, h\cdot t_i)\} +1 \leqslant 3\cdot K'+1, \]
since both $s_i$ and $t_i$ belong to $\disp{K'}{h}$. Because $\alpha$ and $(u_j)_{j=0}^m$ are $1$--close and $z \in \alpha$, the above inequality implies that $d(z, h \cdot z) \leqslant 3 \cdot K'+1 +2$ and hence $z \in \disp{3K'+3}{h}$.
Since $z \in \delta_i$ was arbitrary we obtain that $\delta_i \subset \disp{3K'+3}{h}$ for any $i$ such that $L_i$ is thick.
This, together with the assertion that $\delta_i \subset \disp{K'}{h}$ for any $i$ such that $L_i$ is thin, finishes the proof of the lemma, as $3 \cdot K' +3=9 \cdot \trol{h}+6$.
\end{proof}

In the next lemma we study the equilaterally triangulated Euclidean plane $\E$. We view it simultaneously as a systolic simplicial complex and as a $\mathrm{CAT}(0)$ metric space (cf.\ Subsections~\ref{subsec:geomchardisks} and~\ref{subsec:euclideandiag}). We denote the $\mathrm{CAT}(0)$ distance between points of $\E$ by $d_{\hspace{1pt} \mathbb{E}^2}$ in order to distinguish it from the standard (combinatorial) distance $d$.

\begin{lem}\label{lem:cat0inflatisgood} Let $h$ be a hyperbolic isometry of $\E$ and let $\gamma \subset \E$ be an $h$--invariant geodesic. Suppose that $\beta$ is a $\mathrm{CAT}(0)$ geodesic (i.e., a straight line) such that $\gamma$ and $\beta$ are Hausdorff $K$--close with respect to the $\mathrm{CAT}(0)$ distance, for some $K>0$. Then $\gamma$ is a $(\frac{4K}{\sqrt{3}}+1)$--good geodesic.
\end{lem}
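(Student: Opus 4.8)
The plan is to transport the statement entirely into the $\mathrm{CAT}(0)$ metric $d_{\mathbb{E}^2}$ on $\E$, use that the layers of a Euclidean geodesic are lattice lines meeting $\beta$ at a controlled angle, and then translate the resulting bound back into the combinatorial metric $d$. The first ingredient I would record is the comparison between the two metrics on $\E$. Writing a vertex as $ae_1+be_2$ with $e_1=(1,0)$ and $e_2=(\tfrac12,\tfrac{\sqrt3}{2})$, one has $d_{\mathbb{E}^2}=\sqrt{a^2+ab+b^2}$ and $d=\tfrac12(|a|+|b|+|a+b|)$, from which $\tfrac{\sqrt3}{2}\,d\le d_{\mathbb{E}^2}\le d$ on $\E^{(0)}$. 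The point that makes the whole argument sharp is the boundary case: if two vertices lie on a common lattice line (one of the three edge directions $e_1,e_2,e_1-e_2$), then that line is a combinatorial geodesic and $d=d_{\mathbb{E}^2}$ between them. This is what will let me convert a $d_{\mathbb{E}^2}$-estimate into a $d$-estimate without loss.

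Next I would recall the shape of Euclidean geodesics in the flat $\E$ from Section~\ref{sec:constructgood}. Fix vertices $v_j,v_k$ of $\gamma$ and let $(\delta^{j,k}_i)_{i=j}^{k}$ be the Euclidean geodesic between them. Each layer $L_{i-j}=S_{i-j}(v_j,\E)\cap S_{k-i}(v_k,\E)$ is a segment of a single lattice line $\ell_{i-j}$, and both the $\gamma$-vertex $v_i$ and every vertex $u_i\in\delta^{j,k}_i$ lie on $\ell_{i-j}$. By the construction of the Euclidean diagonal (Subsection~\ref{subsec:euclideandiag}) the diagonal tracks the $\mathrm{CAT}(0)$ geodesic $\alpha$ joining the endpoints, so each $u_i$ lies within $d_{\mathbb{E}^2}$-distance $\tfrac12$ of the crossing point $p_{i-j}=\alpha\cap\ell_{i-j}$; up to the half-integer offsets built into $\alpha$ this is the point $[v_j,v_k]\cap\ell_{i-j}$. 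Layers of thickness at most $1$ are handled trivially, since then the whole layer has diameter $\le 1$ and $d(v_i,u_i)\le 1$.

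The core estimate is then purely $\mathrm{CAT}(0)$-geometric. Both $v_i$ and $p_{i-j}$ lie on the line $\ell_{i-j}$ and in the slab $N_K(\beta)=\{x:d_{\mathbb{E}^2}(x,\beta)\le K\}$: the first because $v_i\in\gamma\subseteq N_K(\beta)$, the second because $[v_j,v_k]\subseteq N_K(\beta)$ by convexity of $x\mapsto d_{\mathbb{E}^2}(x,\beta)$. Hence both lie in the chord $\ell_{i-j}\cap N_K(\beta)$, whose $d_{\mathbb{E}^2}$-length is $2K/\sin\theta$ with $\theta=\angle(\ell_{i-j},\beta)$. The decisive computation is the angle bound $\sin\theta\ge\tfrac{\sqrt3}{2}$: taking the chord direction $ae_1+be_2$ in the sextant $a,b\ge 0$, the layer direction is $e_1-e_2$, and a direct computation gives $\sin\angle(\ell_{i-j},\text{chord})=\tfrac{\sqrt3}{2}\cdot\tfrac{a+b}{\sqrt{a^2+ab+b^2}}\in[\tfrac{\sqrt3}{2},1]$; since $\gamma$ is $h$-invariant and $h$ acts on $\E$ as a translation, $\beta$ is the translation axis and is coarsely parallel to the chords, so the same lower bound holds for $\angle(\ell_{i-j},\beta)$. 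This yields $d_{\mathbb{E}^2}(v_i,p_{i-j})\le 2K/\sin\theta\le \tfrac{4K}{\sqrt3}$, hence $d_{\mathbb{E}^2}(v_i,u_i)\le \tfrac{4K}{\sqrt3}+\tfrac12$. Because $v_i$ and $u_i$ lie on the common lattice line $\ell_{i-j}$ we have $d(v_i,u_i)=d_{\mathbb{E}^2}(v_i,u_i)$, and absorbing the half-integer offsets gives $d(v_i,u_i)\le \tfrac{4K}{\sqrt3}+1$, i.e.\ the required $(\tfrac{4K}{\sqrt3}+1)$-goodness.

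The hard part will be making the angle bound uniform and pinning the longitudinal position along $\ell_{i-j}$, so that $v_i$ and $p_{i-j}$ genuinely land in the \emph{same} short chord of the slab. For chords that are long compared with $K$ this is immediate, as the chord direction is then forced into the same sextant as $\beta$ and $\sin\theta\ge\tfrac{\sqrt3}{2}$ applies verbatim. The troublesome regime is short chords, where $d_{\mathbb{E}^2}(v_j,v_k)$ is comparable to $K$ and the chord may point in a different sextant than $\beta$; there I would argue directly, using that $u_i$ and $v_i$ both lie at combinatorial distance $i-j$ from $v_j$, so $d(v_i,u_i)\le 2(i-j)\le 2(k-j)$, which is $\le\tfrac{4K}{\sqrt3}$ once $k-j$ is small (this is exactly the threshold at which a chord with endpoints in $N_K(\beta)$ can turn transverse to $\beta$). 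Reconciling the two regimes and tracking the $\tfrac12$-offsets so that the constant comes out as precisely $\tfrac{4K}{\sqrt3}+1$ is the delicate bookkeeping, but conceptually the single lattice computation $\sin\theta\ge\tfrac{\sqrt3}{2}$ carries all the content.
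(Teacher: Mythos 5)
Your overall route is the same as the paper's: compare everything along the layer lines, exploit an angle of at least $60^{\circ}$ between the direction of travel and those lines (so that $\mathrm{CAT}(0)$ closeness $K$ to $\beta$ becomes a bound of order $2K/\sqrt{3}$ measured along a layer), use the $\frac{1}{2}$--closeness of the Euclidean diagonal to a straight segment, and convert back to the combinatorial metric because all compared points lie on one lattice line. Your slab reformulation (both $v_i$ and the crossing point of $[v_j,v_k]$ lie in $\ell_{i-j}\cap N_K(\beta)$, a chord of length at most $2K/\sin\theta$) is a clean way of merging the paper's two estimates (5.2) and (5.3), and your lattice computation of $\sin\angle(\ell,\mathrm{chord})\geqslant \frac{\sqrt{3}}{2}$ is correct. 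However, the decisive step is exactly the one you leave loose: what you need is the angle bound for $\beta$ itself against the layer lines, and ``$\beta$ is the translation axis and is coarsely parallel to the chords'' does not deliver it. The only constraint your argument places on the direction of $\beta$ relative to the frame of a given pair $(v_j,v_k)$ is that both chord endpoints lie in $N_K(\beta)$, which forces $\sin\phi\leqslant 2K/d_{\mathbb{E}^2}(v_j,v_k)$ for the angle $\phi$ between chord and $\beta$; this is vacuous at your threshold $k-j\approx 2K/\sqrt{3}$, and even where it bites it only gives $\angle(\beta,\ell_{i-j})\geqslant 60^{\circ}-\phi$, so the slab chord has length $2K/\sin(60^{\circ}-\phi)>4K/\sqrt{3}$ and the stated constant is lost. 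Concretely, for a chord of combinatorial length $3K$ with $K$ large, neither of your regimes applies: the fallback gives $2(k-j)=6K>\frac{4K}{\sqrt{3}}+1$, while the angle constraint allows $\phi$ up to roughly $50^{\circ}$, so within your argument $\beta$ could be almost parallel to the layers and you get no bound at all.

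The missing idea is to use the $h$--invariance of $\gamma$ exactly, not coarsely, and this is precisely what the paper does: beyond $y$ the geodesic $\gamma$ contains vertices both of whose coordinates (in the frame attached to the pair) are arbitrarily large, and since the straight line $\beta$ stays within $K$ of them, its direction must lie in the \emph{closed} positive sextant; hence $\angle(\beta,\ell_k)\geqslant 60^{\circ}$ for every layer of every pair, with no regime splitting and no loss in the constant. (Equivalently: additivity of the hexagonal norm along the $h$--invariant geodesic $\gamma$ forces every chord $v_k-v_j$ and the translation vector of $h$, to which $\beta$ is parallel, into a common closed sextant.) With that statement in hand your slab argument closes. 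A secondary error: your claim that a thin layer ``has diameter $\leqslant 1$'' conflates thickness with the diameter of $S_{i-j}(v_j)\cap S_{k-i}(v_k)$; thickness only bounds the distance between the two directed geodesics. For instance, for $v_k-v_j=2e_1+2e_2$ the middle layer is thin (both directed geodesics pass through $e_1+e_2$) yet it contains $2e_1$ and $2e_2$ at distance $2$, and an $h$--invariant $\gamma$ can pass through such an extreme point. So thin layers and the degenerate configurations also need the angle/interpolation argument, as in the paper's treatment of the cases $2m\in\{0,2,n-2,n-1,n\}$, rather than being trivial.
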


\begin{proof}
We first give the idea of the proof. We observe that for any thick interval in $\E$ there is a unique characteristic surface. After identifying the characteristic disk $\Delta$ with its image, the $\mathrm{CAT}(0)$ geodesic $\alpha$ in the modified characteristic disk $\Delta'$ is uniformly close to $\beta$, and this distance depends only on $K$. This will imply the lemma as the simplices of the Euclidean geodesic are $1$--close to $\alpha$, and $\gamma$ is $K$--close to $\beta$. The case of thin layers will follow easily from the methods used to prove the case of thick intervals.\smallskip

We now begin the proof. Let $x$ and $y$ be any two vertices of $\gamma$. Let $(\sigma_i)_{i=0}^n$ be the directed geodesic from $x$ to $y$ and let $(\tau_i)_{i=0}^n$ be the directed geodesic going in the opposite direction (again, simplices of $(\tau_i)_{i=0}^n$ are indexed in the same direction as for $(\sigma_i)_{i=0}^n$, see Convention~\ref{conv:directionconvention}). One checks that these geodesics have the form shown in Figure~\ref{fig:genericposition}.

\begin{figure}[!h]
\centering
\begin{tikzpicture}[scale=0.7]
\begin{scope}
    \clip(-8,4) rectangle (8.5,-3);

\begin{scope}[color=black, very thin]
\draw (-8,0) to (10,0);
\draw (-8,1) to (10,1);
\draw (-8,2) to (10,2);
\draw (-8,3) to (10,3);
\draw (-8,4) to (10,4);
\draw (-8,5) to (10,5);
\draw (-8,6) to (10,6);
\draw (-8,-1) to (10,-1);
\draw (-8,-2) to (10,-2);
\draw (-8,-3) to (10,-3);
\draw (-8,-4) to (10,-4);
\draw(-8,4) to (-7,6);
\draw(-8,2) to (-6,6);
\draw(-8,0) to (-5,6);
\draw(-8,-2) to (-4,6);
\draw(-8,-4) to (-3,6);
\draw(-7,-4) to (-2,6);
\draw(-6,-4) to (-1,6);
\draw(-5,-4) to (0,6);
\draw(-4,-4) to (1,6);
\draw(-3,-4) to (2,6);
\draw(-2,-4) to (3,6);
\draw(-1,-4) to (4,6);
\draw(0,-4) to (5,6);
\draw(1,-4) to (6,6);
\draw(2,-4) to (7,6);
\draw(3,-4) to (8,6);
\draw(4,-4) to (9,6);
\draw(5,-4) to (10,6);
\draw(6,-4) to (10,4);
\draw(7,-4) to (10,2);
\draw(8,-4) to (10,0);
\draw(9,-4) to (10,-2);

\begin{scope}[yscale=-1,xscale=1, shift={(0,-2)}]
\draw(-8,4) to (-7,6);
\draw(-8,2) to (-6,6);
\draw(-8,0) to (-5,6);
\draw(-8,-2) to (-4,6);
\draw(-8,-4) to (-3,6);
\draw(-7,-4) to (-2,6);
\draw(-6,-4) to (-1,6);
\draw(-5,-4) to (0,6);
\draw(-4,-4) to (1,6);
\draw(-3,-4) to (2,6);
\draw(-2,-4) to (3,6);
\draw(-1,-4) to (4,6);
\draw(0,-4) to (5,6);
\draw(1,-4) to (6,6);
\draw(2,-4) to (7,6);
\draw(3,-4) to (8,6);
\draw(4,-4) to (9,6);
\draw(5,-4) to (10,6);
\draw(6,-4) to (10,4);
\draw(7,-4) to (10,2);
\draw(8,-4) to (10,0);
\draw(9,-4) to (10,-2);
\end{scope}
\end{scope}

\definecolor{darkyellow}{RGB}{150,130,20}
\draw[line width=3, yellow!50] (1,4)--(4.5,-3);

\node[darkyellow, right]  at (4.25,-2.1) {$l_{2m+1}$};

\end{scope}



\begin{scope}[red, ultra thick]

\draw[very thick, dotted] (-7,-2)-- (0.5,3);

\draw[very thick, dotted] (7.5,3)-- (0,-2);
\draw[very thick, dotted](0.5,3) -- (7.5, 3);
\draw[very thick, dotted](5.5,3.5) -- (7.5, 3);
\draw[very thick, dotted](5.5,2.5) -- (7.5, 3);

\draw[very thick, dotted](0,-2) -- (-7, -2);

\draw[very thick, dotted](-5, -1.5) -- (-7, -2);
\draw[very thick, dotted](-5,-2.5) -- (-7, -2);
\end{scope}

\draw[fill] (-7,-2)   circle [radius=0.1];
\draw[ultra thick] (-6, -2) --(-6.5, -1);

\begin{scope}[shift={(1.5,1)}]
\draw[fill] (-7,-2)   circle [radius=0.1];
\draw[ultra thick] (-6, -2) --(-6.5, -1);
\end{scope}
\begin{scope}[shift={(3,2)}]
\draw[fill] (-7,-2)   circle [radius=0.1];
\draw[ultra thick] (-6, -2) --(-6.5, -1);
\end{scope}
\begin{scope}[shift={(4.5,3)}]
\draw[fill] (-7,-2)   circle [radius=0.1];
\draw[ultra thick] (-6, -2) --(-6.5, -1);
\end{scope}
\begin{scope}[shift={(6,4)}]
\draw[fill] (-7,-2)   circle [radius=0.1];
\draw[ultra thick] (-6, -2) --(-6.5, -1);
\end{scope}

\draw[fill] (0.5,3)   circle [radius=0.1];
\draw[fill] (1.5,3)   circle [radius=0.1];
\draw[fill] (2.5,3)   circle [radius=0.1];
\draw[fill] (3.5,3)   circle [radius=0.1];
\draw[fill] (4.5,3)   circle [radius=0.1];
\draw[fill] (5.5,3)   circle [radius=0.1];
\draw[fill] (6.5,3)   circle [radius=0.1];
\draw[fill] (7.5,3)   circle [radius=0.1];


\node  [above left] at (-7,-2) {$x$};
\node  [above right] at (7.5,3) {$y$};

\node  [ left] at (-6.5,-1) {$\sigma_1$};
\node  [above left] at (-5.5,-1) {$\sigma_2$};
\node  [ left] at (-5,0) {$\sigma_3$};

\node  [below right] at (-0.15,2) {$\sigma_{2m-1}$};
\node  [above] at (0.4,3) {$\sigma_{2m}$};
\node  [above] at (1.6,3) {$\sigma_{2m+1}$};

\node  [above] at (6.55,3) {$\sigma_{n-1}$};
\node  [above] at (5.35,3) {$\sigma_{n-2}$};
\node  [above] at (-3,2) {$ (\sigma_{i})_{i=0}^n   $} ;


\begin{scope}[shift={(0.5,1)}, rotate=180]
\draw[fill] (-7,-2)   circle [radius=0.1];
\draw[ultra thick] (-6, -2) --(-6.5, -1);

\begin{scope}[shift={(1.5,1)}]
\draw[fill] (-7,-2)   circle [radius=0.1];
\draw[ultra thick] (-6, -2) --(-6.5, -1);
\end{scope}
\begin{scope}[shift={(3,2)}]
\draw[fill] (-7,-2)   circle [radius=0.1];
\draw[ultra thick] (-6, -2) --(-6.5, -1);
\end{scope}
\begin{scope}[shift={(4.5,3)}]
\draw[fill] (-7,-2)   circle [radius=0.1];
\draw[ultra thick] (-6, -2) --(-6.5, -1);
\end{scope}
\begin{scope}[shift={(6,4)}]
\draw[fill] (-7,-2)   circle [radius=0.1];
\draw[ultra thick] (-6, -2) --(-6.5, -1);
\end{scope}

\draw[fill] (0.5,3)   circle [radius=0.1];
\draw[fill] (1.5,3)   circle [radius=0.1];
\draw[fill] (2.5,3)   circle [radius=0.1];
\draw[fill] (3.5,3)   circle [radius=0.1];
\draw[fill] (4.5,3)   circle [radius=0.1];
\draw[fill] (5.5,3)   circle [radius=0.1];
\draw[fill] (6.5,3)   circle [radius=0.1];
\draw[fill] (7.5,3)   circle [radius=0.1];
\end{scope}

\node  [below] at (5,0) {$ (\tau_{i})_{i=0}^n   $} ;
\node  [below] at (-6,-2) {$\tau_1$};
\node  [below] at (-5,-2) {$\tau_2$};
\node  [below right] at (-0.25,-2) {$\tau_{n-2m}$};
\node  [ right] at (1,-2) {$\tau_{n-2m+1}$};
\node  [above] at (0.4,3) {$\sigma_{2m}$};
\node  [below right] at (1.35,-1) {$\tau_{n-2m+2}$};
\node  [below] at (7.55,2) {$\tau_{n-1}$};
\node  [below] at (6.35,2) {$\tau_{n-2}$};

\end{tikzpicture}
\caption{Generic form and position of the directed geodesics $ (\sigma_{i})_{i=0}^n$ and $ (\tau_{i})_{i=0}^n$ in $\E$.}
\label{fig:genericposition}
\end{figure}

 For $k \in \{0,1,\ldots, n\}$ let $l_k$ denote the infinite line in $\E$ that contains $\sigma_k$ and~$\tau_k$. In particular, for $k \in \{1,\ldots, n-1\}$ the line $l_k$ contains the layer $L_k$ between $x$~and~$y$. Note that the geodesic $(\sigma_i)_{i=0}^n$ splits into two parts: the part $(\sigma_i)_{i=0}^{2m}$, where $\sigma_i$ is a vertex for even $i$ and an edge for odd $i$, and the part $(\sigma_i)_{i=2m+1}^n$, which consists entirely of vertices. Now observe that if $2m \in \{0,2, n-1, n\}$ then for every $k \in \{1, \ldots,n-1 \}$ the layer $L_k$ with respect to $(\sigma_i)_{i=0}^n$ and $(\tau_i)_{i=0}^n$  is thin. If $2m=n-2$ then for $k \in \{1, 2m+1\}$ the layer $L_k$ is thin, and for $k \in \{ 2, \ldots, 2m \}$ the layer $L_k$ is thin for $k$ even, and thick of thickness $2$ for $k$ odd. The above five cases ($2m \in \{0, 2, n-2, n-1, n\}$) will be dealt with at the end.\smallskip

 Now assume that $2< 2m <n-2$. In this case for $k\in \{1,2, n-1,n-2\}$ layers $L_k$ are thin and $(2, n-2)$ is a thick interval (i.e.,\ layers $L_3, L_4, \ldots, L_{n-3}$ are thick). Let $S \colon \Delta \to \E$ be a characteristic surface for the interval $(2,n-2)$. The image of $S$ is presented in Figure~\ref{fig:alpha_alphaprim}. Observe that $S$ is the unique characteristic surface for this interval, and that it is an isometric embedding. Therefore we can identify $\Delta$ with $S(\Delta)$ and treat it as a subcomplex of $\E$.

Let $\alpha$ denote the $\mathrm{CAT}(0)$ geodesic in the modified characteristic disc $\Delta' \subset \Delta \subset \E$ between the midpoints of edges $[\sigma_2, \tau_2]$ and $[\sigma_{n-2}, \tau_{n-2}]$. Denote these midpoints by $v_2$ and $v_{n-2}$ respectively. Let $\alpha'$ be a $\mathrm{CAT}(0)$ geodesic joining $x$ and $y$. Both $\alpha$ and $\alpha'$ are shown in Figure~\ref{fig:alpha_alphaprim}.  Note that $\alpha'$  does not appear in the construction of the Euclidean diagonal; it is an auxiliary geodesic that will be used to estimate distances between $\alpha$ and $\beta$.

\begin{figure}[!h]
\centering
\begin{tikzpicture}[scale=0.7]
\definecolor{vlgray}{RGB}{225,225,225}

\colorlet{backdark}{red!15}
\colorlet{backlight}{red!10}

\begin{scope}
    \clip(-8,4) rectangle (8.5,-3);


\draw[fill=backdark] (-5,-2) -- (-5.5, -1)--(-5,0)--(-4,0)--(-3.5, 1)--(-2.5,1)--(-2,2)--
(-1,2)--(-0.5,3)--(5.5,3)--(6,2)--(5.5,1)--(4.5,1)--(4,0)--(3,0)--(2.5,-1)--(1.5,-1)--(1,-2)--(-5,-2);

\draw[fill=backlight] (-5.25,-1.5 )--(0.75, -1.5) -- (1.25,-0.5)--(2.25,-0.5)--(2.75,0.5)--(3.75, 0.5)--(4.25, 1.5)--(5.25, 1.5)  -- (5.75,2.5)--
(-0.25, 2.5)--(-0.75, 1.5)--(-1.75, 1.5)--(-2.25, 0.5)--(-3.25, 0.5)--(-3.75, -0.5)--(-4.75, -0.5)--(-5.25, -1.5);

\begin{scope}[color=black, very thin]
\draw (-8,0) to (10,0);
\draw (-8,1) to (10,1);
\draw (-8,2) to (10,2);
\draw (-8,3) to (10,3);
\draw (-8,4) to (10,4);
\draw (-8,5) to (10,5);
\draw (-8,6) to (10,6);
\draw (-8,-1) to (10,-1);
\draw (-8,-2) to (10,-2);
\draw (-8,-3) to (10,-3);
\draw (-8,-4) to (10,-4);
\draw(-8,4) to (-7,6);
\draw(-8,2) to (-6,6);
\draw(-8,0) to (-5,6);
\draw(-8,-2) to (-4,6);
\draw(-8,-4) to (-3,6);
\draw(-7,-4) to (-2,6);
\draw(-6,-4) to (-1,6);
\draw(-5,-4) to (0,6);
\draw(-4,-4) to (1,6);
\draw(-3,-4) to (2,6);
\draw(-2,-4) to (3,6);
\draw(-1,-4) to (4,6);
\draw(0,-4) to (5,6);
\draw(1,-4) to (6,6);
\draw(2,-4) to (7,6);
\draw(3,-4) to (8,6);
\draw(4,-4) to (9,6);
\draw(5,-4) to (10,6);
\draw(6,-4) to (10,4);
\draw(7,-4) to (10,2);
\draw(8,-4) to (10,0);
\draw(9,-4) to (10,-2);

\begin{scope}[yscale=-1,xscale=1, shift={(0,-2)}]
\draw(-8,4) to (-7,6);
\draw(-8,2) to (-6,6);
\draw(-8,0) to (-5,6);
\draw(-8,-2) to (-4,6);
\draw(-8,-4) to (-3,6);
\draw(-7,-4) to (-2,6);
\draw(-6,-4) to (-1,6);
\draw(-5,-4) to (0,6);
\draw(-4,-4) to (1,6);
\draw(-3,-4) to (2,6);
\draw(-2,-4) to (3,6);
\draw(-1,-4) to (4,6);
\draw(0,-4) to (5,6);
\draw(1,-4) to (6,6);
\draw(2,-4) to (7,6);
\draw(3,-4) to (8,6);
\draw(4,-4) to (9,6);
\draw(5,-4) to (10,6);
\draw(6,-4) to (10,4);
\draw(7,-4) to (10,2);
\draw(8,-4) to (10,0);
\draw(9,-4) to (10,-2);
\end{scope}
\end{scope}

\end{scope}

\definecolor{darkyellow}{RGB}{150,130,20}
\draw[line width=3, yellow!50] (1,4)--(4.5,-3);

\node[darkyellow, right]  at (4.25,-2.1) {$l_k$};


\draw[very thick, dashed] (-5,-2) -- (-5.5, -1)--(-5,0)--(-4,0)--(-3.5, 1)--(-2.5,1)--(-2,2)--
(-1,2)--(-0.5,3)--(5.5,3)--(6,2)--(5.5,1)--(4.5,1)--(4,0)--(3,0)--(2.5,-1)--(1.5,-1)--(1,-2)--(-5,-2);

\draw[ultra thick] (-6, -2) --(-6.5, -1);

\begin{scope}[shift={(1.5,1)}]
\draw[fill] (-7,-2)   circle [radius=0.1];
\draw[ultra thick] (-6, -2) --(-6.5, -1);
\end{scope}
\begin{scope}[shift={(3,2)}]
\draw[fill] (-7,-2)   circle [radius=0.1];
\draw[ultra thick] (-6, -2) --(-6.5, -1);
\end{scope}
\begin{scope}[shift={(4.5,3)}]
\draw[fill] (-7,-2)   circle [radius=0.1];
\draw[ultra thick] (-6, -2) --(-6.5, -1);
\end{scope}
\begin{scope}[shift={(6,4)}]
\draw[fill] (-7,-2)   circle [radius=0.1];
\draw[ultra thick] (-6, -2) --(-6.5, -1);
\end{scope}

\draw[fill] (0.5,3)   circle [radius=0.1];
\draw[fill] (1.5,3)   circle [radius=0.1];
\draw[fill] (2.5,3)   circle [radius=0.1];
\draw[fill] (3.5,3)   circle [radius=0.1];
\draw[fill] (4.5,3)   circle [radius=0.1];
\draw[fill] (5.5,3)   circle [radius=0.1];
\draw[fill] (6.5,3)   circle [radius=0.1];
\draw[fill] (7.5,3)   circle [radius=0.1];

\draw (-5.25,-1.5 )--(0.75, -1.5) -- (1.25,-0.5)--(2.25,-0.5)--(2.75,0.5)--(3.75, 0.5)--(4.25, 1.5)--(5.25, 1.5)  -- (5.75,2.5)--
(-0.25, 2.5)--(-0.75, 1.5)--(-1.75, 1.5)--(-2.25, 0.5)--(-3.25, 0.5)--(-3.75, -0.5)--(-4.75, -0.5)--(-5.25, -1.5);


\draw[thick, teal] (-7,-2) --(7.5,3);


\node  [above right] at (7.5,3) {$y$};

\node  [ left] at (-6.5,-1) {$\sigma_1$};
\node  [above left] at (-5.5,-1) {$\sigma_2$};
\node  [ left] at (-5,0) {$\sigma_3$};

\node  [below right] at (-0.15,2) {$\sigma_{2m-1}$};
\node  [above] at (0.4,3) {$\sigma_{2m}$};
\node  [above] at (1.6,3) {$\sigma_{k}$};
\node  [darkgray, above] at (3,2.95) { $(=\sigma_{2m+1})$};

\node  [above] at (6.5,3) {$\sigma_{n-1}$};
\node  [above] at (-3,2) {$ (\sigma_{i})_{i=0}^n   $} ;
\node  [above] at (5.35,3) {$\sigma_{n-2}$};


\begin{scope}[shift={(0.5,1)}, rotate=180]
\draw[fill] (-7,-2)   circle [radius=0.1];
\draw[ultra thick] (-6, -2) --(-6.5, -1);

\begin{scope}[shift={(1.5,1)}]
\draw[fill] (-7,-2)   circle [radius=0.1];
\draw[ultra thick] (-6, -2) --(-6.5, -1);
\end{scope}
\begin{scope}[shift={(3,2)}]
\draw[fill] (-7,-2)   circle [radius=0.1];
\draw[ultra thick] (-6, -2) --(-6.5, -1);
\end{scope}
\begin{scope}[shift={(4.5,3)}]
\draw[fill] (-7,-2)   circle [radius=0.1];
\draw[ultra thick] (-6, -2) --(-6.5, -1);
\end{scope}
\begin{scope}[shift={(6,4)}]
\draw[fill] (-7,-2)   circle [radius=0.1];
\draw[ultra thick] (-6, -2) --(-6.5, -1);
\end{scope}

\draw[fill] (0.5,3)   circle [radius=0.1];
\draw[fill] (1.5,3)   circle [radius=0.1];
\draw[fill] (2.5,3)   circle [radius=0.1];
\draw[fill] (3.5,3)   circle [radius=0.1];
\draw[fill] (4.5,3)   circle [radius=0.1];
\draw[fill] (5.5,3)   circle [radius=0.1];
\draw[fill] (6.5,3)   circle [radius=0.1];
\draw[fill] (7.5,3)   circle [radius=0.1];
\end{scope}


\node  [below] at (5,0) {$ (\tau_{i})_{i=0}^n   $} ;
\node  [below] at (-6,-2) {$\tau_1$};
\node  [below] at (-5,-2) {$\tau_2$};
\node  [below right] at (-0.25,-2) {$\tau_{n-2m}$};
\node  [right] at (1,-2) {$\tau_{n-2m+1}$};
\node  [above] at (0.4,3) {$\sigma_{2m}$};
\node  [below right] at (1.35,-1) {$\tau_{n-2m+2}$};
\node  [below] at (7.5,2) {$\tau_{n-1}$};

\node  [below] at (6.35,2) {$\tau_{n-2}$};



\node  [left, orange] at (-7,-2) {$x$};
\draw [fill, orange] (-7,-2)   circle [radius=0.1];

\draw[ultra thick, orange, ->] (-7,-2)--(-6,-2);
\draw[ultra thick, orange, ->] (-7,-2)--(-6.5,-1);
\node  [orange, below] at (-6.6,-2.05) {$\overrightarrow{x \tau_1}$};
\node  [orange, left] at (-6.75,-1.45) {$\overrightarrow{x w}$};

\draw[thick, red] (-5.25,-1.5 )-- (5.75,2.5);

\draw [fill, red] (-5.25, -1.5)   circle [radius=0.075];
\draw [fill, red] (5.75,2.5)      circle [radius=0.075];

\node  [red, below left] at (-5.15, -1.5) {$v_2$};
\node  [red, left ] at (5.75,2.6)   {$v_{n-2}$};

\node  [red] at (-3.35,-1.15)   {$\alpha$};


\node  [teal] at (-4.1,-0.7)   {$\alpha'$};

\draw [fill, teal] (-5.29,-1.41)  circle [radius=0.075];
\draw [fill, teal] (5.79,2.41)      circle [radius=0.075];

\node  [left, teal] at (-5.29,-1.3)  {$q_2$};
\node  [right, teal] at (5.79,2.3)    {$q_{n-2}$};


\draw [fill, red] (0.73* 2.5 +  0.27*2, 0.73+ 0.27*2)  circle [radius=0.075];
\draw [fill, teal] (0.77* 2.5 +  0.23*2, 0.77+ 0.23*2)  circle [radius=0.075];

\node  [left, red] at (0.73* 2.5 +  0.27*2, 0.73+ 0.27*2+0.15)  {$v_k$};
\node  [right, teal] at (0.77* 2.5 +  0.23*2, 0.77+ 0.23*2-0.15)  {$q_k$};

\draw [thick, ->] (-5.5,3)--(-3,0.75);

\node [above ] at (-5.8 ,3) {$S(\Delta)$};

\draw [thick, ->] (-6,2)-- (-2.5,0.25);

\node [above] at (-6.25,1.9) {$\Delta'$};
\end{tikzpicture}
\caption{Image of the characteristic surface $S \colon \Delta \to \E$ and the modified characteristic disc $\Delta'$. Geodesics $\alpha$ and $\alpha'$ and the coordinate system.~The $\mathrm{CAT}(0)$ distance between $v_k$ and $q_k$ is at most $\frac{1}{2}$.}
\label{fig:alpha_alphaprim}
\end{figure}
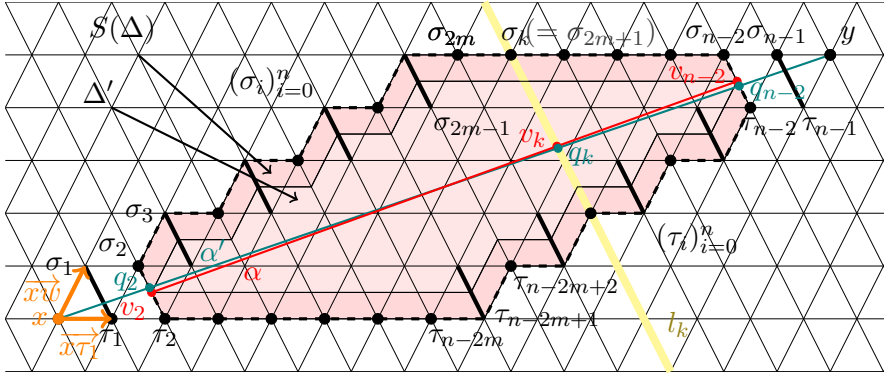

Introduce a coordinate system on $\E$ by setting $x$ as the base vertex, and vectors $\overrightarrow{x \tau_1}$ and $\overrightarrow{x w}$, where $w$ is the vertex of $\sigma_1$ that does not belong to $\tau_1$, as the base vectors (see Figure~\ref{fig:alpha_alphaprim}). Note that both $\alpha$ and $\alpha'$ are contained in the sector of $\E$ bounded by $x$ and half-lines emanating from $x$ in the directions of $\overrightarrow{x \tau_1}$ and $\overrightarrow{x w}$. Moreover, the Euclidean angle between  $\alpha$ and $\overrightarrow{x \tau_1}$ and the Euclidean angle between $\alpha'$ and $\overrightarrow{x \tau_1}$ are both between $0^{\circ}$ and $30^{\circ}$. In particular $\alpha'$ intersects the interior of edges $[\sigma_2, \tau_2]$ and $[\sigma_{n-2}, \tau_{n-2}]$. Call the intersection points $q_2$ and $q_{n-2}$ respectively.

Note that the $\mathrm{CAT}(0)$ distances between $v_2$ and $q_2$ and between $v_{n-2}$ and $q_{n-2}$ are less than $\frac{1}{2}$. For $k \in \{2, \ldots, n-2 \}$ let $q_k$ and $v_k$ denote the intersection points of $\alpha$ and $\alpha'$ respectively with the layer $L_k$ (see Figure~\ref{fig:alpha_alphaprim}). Since the edges $[\sigma_2, \tau_2]$ and $[\sigma_{n-2}, \tau_{n-2}]$ are parallel to all the layers, by elementary Euclidean geometry, for any $k \in \{2,\ldots, n-2\}$ we have:
\begin{equation}\label{eq:flat1} d_{\hspace{1pt} \mathbb{E}^2}(q_k, v_k) \leqslant \mathrm{max} \{ d_{\hspace{1pt} \mathbb{E}^2}(q_2, v_2), d_{\hspace{1pt} \mathbb{E}^2}(q_{n-2}, v_{n-2})\} \leqslant \frac{1}{2}.
\end{equation}

Now consider the $\mathrm{CAT}(0)$ geodesic $\beta$. First we determine the position of $\beta$ with respect to the base vectors $\overrightarrow{x \tau_1}$ and  $\overrightarrow{x w}$. Since both $\overrightarrow{x w}$ and $\overrightarrow{x \tau_1}$ coordinates of $y$ are greater than the coordinates of $x$, and since $\gamma$ is $h$--invariant, it follows that there are vertices $z\in \gamma$ such that $(x,y, z)$ lie on $\gamma$ in this order and both coordinates of $z$ are arbitrarily large. Since $\beta$ stays $K$--close to $\gamma$, it follows that the Euclidean angle directed counter-clockwise from $\overrightarrow{x \tau_1}$ to $\beta$ is between $0^{\circ}$ and $60^{\circ}$. Examples of possible geodesics $\beta$ and $\gamma$ are shown in Figure~\ref{fig:beta_distances}.

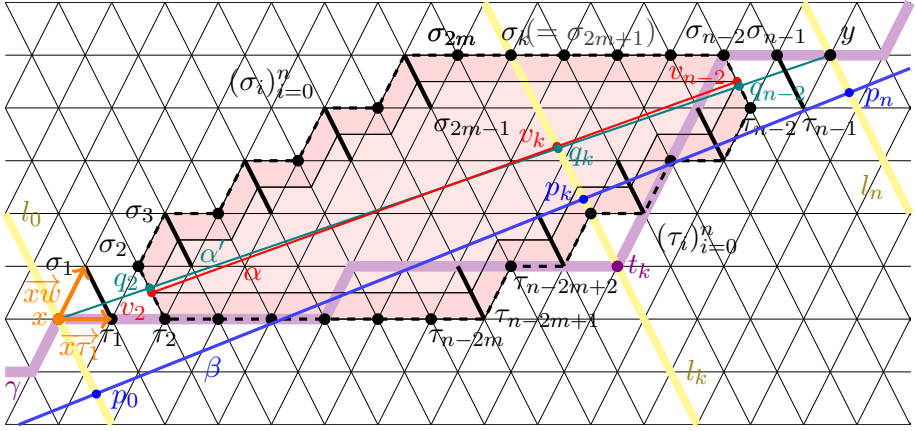
\begin{figure}[!h]
\centering
\begin{tikzpicture}[scale=0.7]
\definecolor{vlgray}{RGB}{225,225,225}

\colorlet{backdark}{red!15}
\colorlet{backlight}{red!10}

\begin{scope}
    \clip(-8,4) rectangle (9,-4);


\draw[fill=backdark] (-5,-2) -- (-5.5, -1)--(-5,0)--(-4,0)--(-3.5, 1)--(-2.5,1)--(-2,2)--
(-1,2)--(-0.5,3)--(5.5,3)--(6,2)--(5.5,1)--(4.5,1)--(4,0)--(3,0)--(2.5,-1)--(1.5,-1)--(1,-2)--(-5,-2);

\draw[fill=backlight] (-5.25,-1.5 )--(0.75, -1.5) -- (1.25,-0.5)--(2.25,-0.5)--(2.75,0.5)--(3.75, 0.5)--(4.25, 1.5)--(5.25, 1.5)  -- (5.75,2.5)--
(-0.25, 2.5)--(-0.75, 1.5)--(-1.75, 1.5)--(-2.25, 0.5)--(-3.25, 0.5)--(-3.75, -0.5)--(-4.75, -0.5)--(-5.25, -1.5);

\begin{scope}[color=black, very thin]
\draw (-8,0) to (10,0);
\draw (-8,1) to (10,1);
\draw (-8,2) to (10,2);
\draw (-8,3) to (10,3);
\draw (-8,4) to (10,4);
\draw (-8,5) to (10,5);
\draw (-8,6) to (10,6);
\draw (-8,-1) to (10,-1);
\draw (-8,-2) to (10,-2);
\draw (-8,-3) to (10,-3);
\draw (-8,-4) to (10,-4);
\draw(-8,4) to (-7,6);
\draw(-8,2) to (-6,6);
\draw(-8,0) to (-5,6);
\draw(-8,-2) to (-4,6);
\draw(-8,-4) to (-3,6);
\draw(-7,-4) to (-2,6);
\draw(-6,-4) to (-1,6);
\draw(-5,-4) to (0,6);
\draw(-4,-4) to (1,6);
\draw(-3,-4) to (2,6);
\draw(-2,-4) to (3,6);
\draw(-1,-4) to (4,6);
\draw(0,-4) to (5,6);
\draw(1,-4) to (6,6);
\draw(2,-4) to (7,6);
\draw(3,-4) to (8,6);
\draw(4,-4) to (9,6);
\draw(5,-4) to (10,6);
\draw(6,-4) to (10,4);
\draw(7,-4) to (10,2);
\draw(8,-4) to (10,0);
\draw(9,-4) to (10,-2);

\begin{scope}[yscale=-1,xscale=1, shift={(0,-2)}]
\draw(-8,4) to (-7,6);
\draw(-8,2) to (-6,6);
\draw(-8,0) to (-5,6);
\draw(-8,-2) to (-4,6);
\draw(-8,-4) to (-3,6);
\draw(-7,-4) to (-2,6);
\draw(-6,-4) to (-1,6);
\draw(-5,-4) to (0,6);
\draw(-4,-4) to (1,6);
\draw(-3,-4) to (2,6);
\draw(-2,-4) to (3,6);
\draw(-1,-4) to (4,6);
\draw(0,-4) to (5,6);
\draw(1,-4) to (6,6);
\draw(2,-4) to (7,6);
\draw(3,-4) to (8,6);
\draw(4,-4) to (9,6);
\draw(5,-4) to (10,6);
\draw(6,-4) to (10,4);
\draw(7,-4) to (10,2);
\draw(8,-4) to (10,0);
\draw(9,-4) to (10,-2);
\end{scope}
\end{scope}

\end{scope}


\draw[line width=4, violet!35] (-8,-3)-- (-7.5,-3)--(-7,-2)--(-2,-2)--(-1.5,-1)--(3.5,-1)--(5.5,3)--(7.5,3)--(8.5,3)--(9,4);

\definecolor{darkyellow}{RGB}{150,130,20}
\draw[line width=3, yellow!50] (1,4)--(5,-4);

\node[darkyellow]  at (5,-3) {$l_k$};

\draw[line width=3, yellow!50] (-8,0)--(-6,-4);

\node[darkyellow]  at (-7.5,0) {$l_0$};

\draw[line width=3, yellow!50] (7,4)--(9,0);

\node[darkyellow]  at (8.3,0.5) {$l_n$};


\draw[very thick, dashed] (-5,-2) -- (-5.5, -1)--(-5,0)--(-4,0)--(-3.5, 1)--(-2.5,1)--(-2,2)--
(-1,2)--(-0.5,3)--(5.5,3)--(6,2)--(5.5,1)--(4.5,1)--(4,0)--(3,0)--(2.5,-1)--(1.5,-1)--(1,-2)--(-5,-2); 

\draw[ultra thick] (-6, -2) --(-6.5, -1);

\begin{scope}[shift={(1.5,1)}]
\draw[fill] (-7,-2)   circle [radius=0.1];
\draw[ultra thick] (-6, -2) --(-6.5, -1);
\end{scope}
\begin{scope}[shift={(3,2)}]
\draw[fill] (-7,-2)   circle [radius=0.1];
\draw[ultra thick] (-6, -2) --(-6.5, -1);
\end{scope}
\begin{scope}[shift={(4.5,3)}]
\draw[fill] (-7,-2)   circle [radius=0.1];
\draw[ultra thick] (-6, -2) --(-6.5, -1);
\end{scope}
\begin{scope}[shift={(6,4)}]
\draw[fill] (-7,-2)   circle [radius=0.1];
\draw[ultra thick] (-6, -2) --(-6.5, -1);
\end{scope}

\draw[fill] (0.5,3)   circle [radius=0.1];
\draw[fill] (1.5,3)   circle [radius=0.1];
\draw[fill] (2.5,3)   circle [radius=0.1];
\draw[fill] (3.5,3)   circle [radius=0.1];
\draw[fill] (4.5,3)   circle [radius=0.1];
\draw[fill] (5.5,3)   circle [radius=0.1];
\draw[fill] (6.5,3)   circle [radius=0.1];
\draw[fill] (7.5,3)   circle [radius=0.1];

\draw (-5.25,-1.5 )--(0.75, -1.5) -- (1.25,-0.5)--(2.25,-0.5)--(2.75,0.5)--(3.75, 0.5)--(4.25, 1.5)--(5.25, 1.5)  -- (5.75,2.5)--
(-0.25, 2.5)--(-0.75, 1.5)--(-1.75, 1.5)--(-2.25, 0.5)--(-3.25, 0.5)--(-3.75, -0.5)--(-4.75, -0.5)--(-5.25, -1.5);


\draw[thick, teal] (-7,-2) --(7.5,3);


\node  [above right] at (7.5,3) {$y$};

\node  [ left] at (-6.5,-1) {$\sigma_1$};
\node  [above left] at (-5.5,-1) {$\sigma_2$};
\node  [ left] at (-5,0) {$\sigma_3$};

\node  [below right] at (-0.15,2) {$\sigma_{2m-1}$};
\node  [above] at (0.4,3) {$\sigma_{2m}$};
\node  [above] at (1.6,3) {$\sigma_{k}$};
\node  [darkgray, above] at (3,2.95) { $(=\sigma_{2m+1})$};
\node  [above] at (6.5,3) {$\sigma_{n-1}$};
\node  [above] at (-3,2) {$ (\sigma_{i})_{i=0}^n   $} ;
\node  [above] at (5.35,3) {$\sigma_{n-2}$};


\begin{scope}[shift={(0.5,1)}, rotate=180]
\draw[fill] (-7,-2)   circle [radius=0.1];
\draw[ultra thick] (-6, -2) --(-6.5, -1);

\begin{scope}[shift={(1.5,1)}]
\draw[fill] (-7,-2)   circle [radius=0.1];
\draw[ultra thick] (-6, -2) --(-6.5, -1);
\end{scope}
\begin{scope}[shift={(3,2)}]
\draw[fill] (-7,-2)   circle [radius=0.1];
\draw[ultra thick] (-6, -2) --(-6.5, -1);
\end{scope}
\begin{scope}[shift={(4.5,3)}]
\draw[fill] (-7,-2)   circle [radius=0.1];
\draw[ultra thick] (-6, -2) --(-6.5, -1);
\end{scope}
\begin{scope}[shift={(6,4)}]
\draw[fill] (-7,-2)   circle [radius=0.1];
\draw[ultra thick] (-6, -2) --(-6.5, -1);
\end{scope}

\draw[fill] (0.5,3)   circle [radius=0.1];
\draw[fill] (1.5,3)   circle [radius=0.1];
\draw[fill] (2.5,3)   circle [radius=0.1];
\draw[fill] (3.5,3)   circle [radius=0.1];
\draw[fill] (4.5,3)   circle [radius=0.1];
\draw[fill] (5.5,3)   circle [radius=0.1];
\draw[fill] (6.5,3)   circle [radius=0.1];
\draw[fill] (7.5,3)   circle [radius=0.1];
\end{scope}


\node  [below] at (5,0) {$ (\tau_{i})_{i=0}^n   $} ;
\node  [below] at (-6,-2) {$\tau_1$};
\node  [below] at (-5,-2) {$\tau_2$};
\node  [below right] at (-0.25,-2) {$\tau_{n-2m}$};
\node  [right] at (1,-2) {$\tau_{n-2m+1}$};
\node  [above] at (0.4,3) {$\sigma_{2m}$};
\node  [below right] at (1.35,-1) {$\tau_{n-2m+2}$};
\node  [below] at (7.5,2) {$\tau_{n-1}$};

\node  [below] at (6.35,2) {$\tau_{n-2}$};



\node  [left, orange] at (-7,-2) {$x$};
\draw [fill, orange] (-7,-2)   circle [radius=0.1];

\draw[ultra thick, orange, ->] (-7,-2)--(-6,-2);
\draw[ultra thick, orange, ->] (-7,-2)--(-6.5,-1);
\node  [orange, below] at (-6.6,-2.05) {$\overrightarrow{x \tau_1}$};
\node  [orange, left] at (-6.75,-1.45) {$\overrightarrow{x w}$};

\draw[thick, red] (-5.25,-1.5 )-- (5.75,2.5);

\draw [fill, red] (-5.25, -1.5)   circle [radius=0.075];
\draw [fill, red] (5.75,2.5)      circle [radius=0.075];

\node  [red, below left] at (-5.15, -1.5) {$v_2$};
\node  [red, left ] at (5.75,2.6)   {$v_{n-2}$};

\node  [red] at (-3.35,-1.15)   {$\alpha$};

\node  [teal] at (-4.1,-0.7)   {$\alpha'$};

\draw [fill, teal] (-5.29,-1.41)  circle [radius=0.075];
\draw [fill, teal] (5.79,2.41)      circle [radius=0.075];

\node  [left, teal] at (-5.29,-1.3)  {$q_2$};
\node  [right, teal] at (5.79,2.3)    {$q_{n-2}$};


\draw [fill, red] (0.73* 2.5 +  0.27*2, 0.73+ 0.27*2)  circle [radius=0.075];
\draw [fill, teal] (0.77* 2.5 +  0.23*2, 0.77+ 0.23*2)  circle [radius=0.075];

\node  [left, red] at (0.73* 2.5 +  0.27*2, 0.73+ 0.27*2+0.15)  {$v_k$};
\node  [right, teal] at (0.77* 2.5 +  0.23*2, 0.77+ 0.23*2-0.15)  {$q_k$};

\draw[very thick, blue!75] (-7.75,-4) --(9,2.75);
\draw [fill, blue] (0.73* 2.5 +  0.27*2+0.5, 0.73+ 0.27*2-1)  circle [radius=0.075];

\node  [left, blue] at (0.73* 2.5 +  0.27*2+0.5, 0.73+ 0.27*2-1+0.15)  {$p_k$};

\draw [fill, blue] (-6.29,-3.42) circle [radius=0.075];
\node[right, blue] at  (-6.29+0.1,-3.42-0.1) {$p_0$};

\draw [fill, blue] (0.29*7.5 + 0.71*8 ,  0.29*3 + 0.71*2) circle [radius=0.075];
\node[right, blue] at (0.29*7.5 + 0.71*8+0.1 ,  0.29*3 + 0.71*2-0.1) {$p_n$};


\draw [fill, violet] (3.5, -1)  circle [radius=0.1];
\node  [right, violet] at (3.5, -1)  {$t_k$};

\node  [right, violet] at (-8.2, -3.35)  {$\gamma$};

\node  [blue] at (-4.1, -2.95)  {$\beta$};
\end{tikzpicture}
\caption{Examples of (parts of) geodesics $\beta$ and $\gamma$ and their position with respect to $\alpha$ and $\alpha'$. Distances between vertices $v_k, q_k, p_k ,t_k$ are measured on the line $l_k$.}
\label{fig:beta_distances}
\end{figure} 

It follows that $\beta$ intersects every line $l_k$ at a single point, which we denote by $p_k$. For any $k \in \{1,\ldots, n-1\}$ let $t_k$ be a vertex of $\gamma$ that belongs to layer $L_k$ (and hence to the line $l_k$), and let $t_0=x$ and $t_n=y$. By assumption, geodesic $\beta$ passes within the $\mathrm{CAT}(0)$ distance $K$ from any $t_k$. Then, since the directed angle from $\overrightarrow{x \tau_1}$ to $\beta$ is between $0^{\circ}$ and $60^{\circ}$, we claim that for any $k \in \{0,\ldots, n\}$ we have: \begin{equation}\label{eq:flat2} d_{\hspace{1pt} \mathbb{E}^2}(p_k, t_k) \leqslant \frac{2K}{\sqrt{3}}.
\end{equation}
To obtain the constant $\frac{2K}{\sqrt{3}}$ $(= \frac{K}{\mathrm{cos}30^{\circ}})$ one checks the extreme cases where $\beta$ is parallel to either $\overrightarrow{x \tau_1}$ or $\overrightarrow{x w}$. In particular we have $d_{\hspace{1pt} \mathbb{E}^2}(p_0, x) \leqslant \frac{2K}{\sqrt{3}}$ and $d_{\hspace{1pt} \mathbb{E}^2}(p_n, y) \leqslant \frac{2K}{\sqrt{3}}$. Now since $\alpha'$ connects $x$ and $y$ and since $\beta$ passes through $p_0$ and $p_n$ it follows that for $k \in \{0, \ldots, n\}$ we have: 
\begin{equation}\label{eq:flat3} d_{\hspace{1pt} \mathbb{E}^2}(p_k, q_k) \leqslant \frac{2K}{\sqrt{3}}.
\end{equation}


Denote by $(\delta_i)_{i=0}^n$ the Euclidean geodesic between $x$ and $y$. By definition, for thin layers we have $\delta_1=\sigma_1$, $\delta_2= [\sigma_2, \tau_2]$, $\delta_{n-2}=[\sigma_{n-2}, \tau_{n-2}]$ and $\delta_{n-1}= \tau_{n-1}$. For thick layers we have $\delta_i=u_i$ where $u_i \in L_i$ is a vertex that is at distance less than $\frac{1}{2}$ from $v_i$ or $\delta_i=[u_i, u'_i]$ if $u_i, u'_i \in L_i$ and $d_{\hspace{1pt} \mathbb{E}^2}(u_i, v_i)= d_{\hspace{1pt} \mathbb{E}^2}(u'_i, q_k)= \frac{1}{2}$. In any case, for $k \in \{3, \ldots,n-3\}$ for any vertex $u_k \in \delta_k$ we have:
\begin{equation}\label{eq:flat4} d_{\hspace{1pt} \mathbb{E}^2}(u_k, v_k) \leqslant \frac{1}{2}.
\end{equation}

Finally, by combining  \eqref{eq:flat1}, \eqref{eq:flat2}, \eqref{eq:flat3} and \eqref{eq:flat4} for any $k \in \{3, \ldots, n-3 \}$ for any $u_k \in \delta_k$ we have:
\begin{equation}\label{eq:duzelk}
\begin{aligned}
d_{\hspace{1pt} \mathbb{E}^2}(t_k, u_k) \leqslant {} & d_{\hspace{1pt} \mathbb{E}^2}(t_k, p_k) +d_{\hspace{1pt} \mathbb{E}^2}(p_k, q_k) +d_{\hspace{1pt} \mathbb{E}^2}(q_k, v_k) +d_{\hspace{1pt} \mathbb{E}^2}(v_k, u_k)\\
					\leqslant {} & \frac{2K}{\sqrt{3}} +\frac{2K}{\sqrt{3}} +\frac{1}{2}+\frac{1}{2}\\
					= {} & \frac{4K}{\sqrt{3}}+1.			
\end{aligned}
\end{equation}
Since all the distances above are measured on the line $l_k$, the same estimate holds for the standard (combinatorial) distance.

For $k \in \{2, n-2\}$ we have $\delta_k= [\sigma_k, \tau_k]$ and one observes that $d_{\hspace{1pt}  \mathbb{E}^2}(q_k, \sigma_k) \leqslant 1$ and $d_{\hspace{1pt} \mathbb{E}^2}(q_k, \tau_k) \leqslant 1$. Combining this with \eqref{eq:flat2} and \eqref{eq:flat3} we obtain the same estimate as in \eqref{eq:duzelk}.  Now for $k \in \{1,n-1\}$, let $q_k$ be the point of intersection $\alpha' \cap \delta_k$. By a direct observation we see that for any vertex $u_k \in \delta_k$ we have $d_{\mathbb{E}^2}(v_k, u_k) \leqslant 1$, and thus again we obtain the same estimate as in \eqref{eq:duzelk}.

We conclude that for any $k \in \{1, \ldots, n-1 \}$ for any $u_k \in \delta_k$ we have 
\[d(t_k, u_k) \leqslant \frac{4K}{\sqrt{3}}+1.\]
Since $x$ and $y$ were arbitrary, this proves that $\gamma$ is a $(\frac{4K}{\sqrt{3}}+1)$--good geodesic.\medskip

For the remaining cases, where geodesics $(\sigma_i)_{i=0}^n$ and $(\tau_i)_{i=0}^n$ are close to each other ($2m \in \{0,2,n-2,n-1,n\}$), we proceed analogously. One observes that the auxiliary $\mathrm{CAT}(0)$ geodesic $\alpha'$ joining $x$ and $y$ passes at the $\mathrm{CAT}(0)$ distance at most $1$ from all the vertices of $(\delta_i)_{i=0}^n$ in all the appropriate layers. The rest of the argument goes the same as in the first case (i.e., one combines the above observation with \eqref{eq:flat2} and \eqref{eq:flat3} and obtains the same estimate as in \eqref{eq:duzelk}).\end{proof}

A \emph{flat} in a systolic complex $X$ is an isometric embedding $F \colon \E \hookrightarrow X$. 

\begin{lem}\label{lem:goodinflatisgood} Let $F \colon \E \hookrightarrow X$ be a flat, and suppose that $\gamma \subset \E$ is a $C'$--good geodesic (where $\E$ is treated as a systolic complex on its own). Then $F(\gamma)$ is a $(C'+10)$--good geodesic in $X$.
\end{lem}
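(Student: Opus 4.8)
The plan is to run the whole comparison against the flat. I fix two vertices $v_j=x$ and $v_k=y$ of the geodesic $F(\gamma)=(v_i)_i$, write $x=F(x')$ and $y=F(y')$, and compare the Euclidean geodesic $(\delta_i)$ between $x$ and $y$ \emph{in $X$} with the image under $F$ of the Euclidean geodesic $(\tilde\delta_i)$ between $x'$ and $y'$ \emph{in $\E$} (see Definition~\ref{def:euclideangeo}). Since $F$ is an isometric embedding and $\gamma$ is $C'$-good in $\E$, for every $i$ and every $\tilde u_i\in\tilde\delta_i$ we have $d(v_i,F(\tilde u_i))=d(v'_i,\tilde u_i)\le C'$. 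Hence the whole lemma reduces to a single geometric statement: \emph{every vertex of $\delta_i$ lies within a universal constant at most $10$ of $F(\tilde\delta_i)$.} Granting this, for $u_i\in\delta_i$ I choose $\tilde u_i\in\tilde\delta_i$ with $d(u_i,F(\tilde u_i))\le 10$ and conclude $d(v_i,u_i)\le d(v_i,F(\tilde u_i))+d(F(\tilde u_i),u_i)\le C'+10$, which is exactly $(C'+10)$-goodness.

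The core of the reduced statement is that directed geodesics do not see the difference between $\E$ and $X$. I would first prove that if $(\sigma'_i)$ is the directed geodesic from $x'$ to $y'$ in $\E$, then $(F(\sigma'_i))$ is \emph{the} directed geodesic from $x$ to $y$ in $X$. As directed geodesics are unique, it suffices to verify the two defining conditions for $(F(\sigma'_i))$ inside $X$. Condition (1) is automatic because $F$ is a simplicial isometric embedding. For condition (2) the inclusion $F(\sigma'_i)\subseteq\res(F(\sigma'_{i-1}),X)\cap B_1(F(\sigma'_{i+1}),X)$ is immediate; for the reverse inclusion I take a vertex $z$ of the intersection and first locate it: adjacency to $F(\sigma'_{i-1})\subset S_{i-1}(x)\cap S_{n-i+1}(y)$ together with $d(z,F(\sigma'_{i+1}))\le 1$ and $F(\sigma'_{i+1})\subset S_{i+1}(x)\cap S_{n-i-1}(y)$ force $z\in S_i(x)\cap S_{n-i}(y)$, so $z$ lies in the same layer as $F(\sigma'_i)$. \textbf{The delicate point, which I expect to be the main obstacle, is to show that $z$ cannot leave the flat.} Here I would work locally: $z$ is adjacent to the whole simplex $F(\sigma'_{i-1})$ and within distance $1$ of $F(\sigma'_{i+1})$, and I would combine the convexity of balls in $X$ (Proposition~\ref{prop:ballcontractible}) with the explicit staircase shape of $(\sigma'_i)$ in the flat (Figure~\ref{fig:genericposition}) and flagness of $X$ to rule out any $z\notin F(\E)$, forcing $z\in F(\sigma'_i)$. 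Applying the same argument to the directed geodesic from $y$ to $x$ (Convention~\ref{conv:directionconvention}) shows that $(\tau_i)$ is likewise $F$ of its flat counterpart.

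With both directed geodesics identified, all the intermediate data of the construction transfers. The thickness of each layer, and therefore the division into thin layers and thick intervals, is read off from $(\sigma_i)$ and $(\tau_i)$, so it coincides with the one computed in $\E$. On a thin layer $\delta_i=\mathrm{span}\{\sigma_i,\tau_i\}$ is literally the $F$-image of its flat analogue, with error $0$. On a thick interval the bounding cycle, assembled from the extremal vertices $s_i,t_i$, is the $F$-image of the flat cycle; consequently the abstract characteristic disk $\Delta$ and the modified disk $\Delta'$ agree in both settings, and so do the $\mathrm{CAT}(0)$ geodesic in $\Delta'$ and the resulting Euclidean diagonal $(\rho_i)$. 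Thus $\delta_i=\mathcal{S}_X(\rho_i)$ whereas $F(\tilde\delta_i)=F(\mathcal{S}_{\E}(\rho_i))$, where $\mathcal{S}_{\E}(\rho_i)$ is a single simplex since $\E$ admits a unique characteristic surface. Because $F$ carries the minimal surface in $\E$ to a minimal surface in $X$ — here one invokes that a minimal disk spanning a cycle in a systolic complex is flat, so its area, hence $\Delta$ itself, is intrinsic to the cycle — the simplex $F(\mathcal{S}_{\E}(\rho_i))$ is a face of the single simplex $\mathcal{S}_X(\rho_i)$, and the two are within distance $1$.

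Putting the cases together, away from the few boundary layers $k\in\{1,2,n-2,n-1\}$ each $\delta_i$ lies within distance $1$ of $F(\tilde\delta_i)$; the boundary layers are handled exactly as in the proof of Lemma~\ref{lem:cat0inflatisgood} and contribute only their own small additive constants, all of which stay comfortably below $10$. Feeding this into the reduction of the first paragraph yields $d(v_i,u_i)\le C'+10$ for every $u_i\in\delta_i$, proving that $F(\gamma)$ is a $(C'+10)$-good geodesic. I expect the identification of directed geodesics — specifically the claim that a directed geodesic between two vertices of an isometrically embedded flat cannot escape the flat — to be the genuinely hard step; the identification of characteristic disks is a secondary hurdle, resolved through the intrinsic nature of minimal filling area, and the generous value $10$ is there precisely to absorb the minor discrepancies at the thin boundary layers and from the non-uniqueness of characteristic surfaces in $X$.
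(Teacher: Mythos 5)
Your opening reduction (compare the Euclidean geodesic in $X$ between $F(x')$ and $F(y')$ with the $F$--image of the Euclidean geodesic in $\E$) is exactly the paper's starting point, but the pillar supporting everything after it --- the claim that if $(\sigma'_i)$ is the directed geodesic from $x'$ to $y'$ in $\E$ then $(F(\sigma'_i))$ is \emph{the} directed geodesic from $F(x')$ to $F(y')$ in $X$ --- is not just delicate, it is false. The residue $\res(F(\sigma'_{i-1}),X)$ is computed in the ambient complex, and an isometric embedding gives no control over it. Concretely, let $X$ be obtained from $\E$ by adding a single vertex $z$ joined to every vertex of $B_1(v,\E)$ for some vertex $v$ (and taking the flag completion). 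One checks that all vertex links of $X$ remain flag and $6$--large and that $X$ is simply connected, so $X$ is systolic, and the inclusion $F\colon \E \hookrightarrow X$ is an isometric embedding, since any path through $z$ can be replaced by one in $\E$ of the same length. Now take two neighbours $x',y'$ of $v$ with $d(x',y')=2$, whose unique common neighbour in $\E$ is $v$. In $X$ the common neighbours of $F(x')$ and $F(y')$ are $v$ and $z$, which are adjacent, so the directed geodesic in $X$ is $(F(x'),\,[v,z],\,F(y'))$: its middle simplex is \emph{not} contained in $F(\E)$, and $(F(x'),v,F(y'))$ fails condition (2) of the definition. So no amount of convexity of balls, flagness, or staircase combinatorics will rule out $z$; it genuinely belongs to the directed geodesic. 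Consequently the whole transfer you build on top (equality of layer thicknesses, of thick intervals, of characteristic disks and Euclidean diagonals) has no foundation; the paper explicitly warns that a priori one does not even know whether $(2,n-2)$ is still a thick interval for the geodesics $(\widetilde\sigma_i)$, $(\widetilde\tau_i)$ in $X$.

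The paper's proof accepts this failure and controls it quantitatively instead of exactly. It invokes Elsner's results on flats: \cite[Proposition 3.8]{E3} bounds by $2$ the distance between vertices of the directed geodesics in $X$ and the images of the corresponding ones in $\E$, and \cite[Lemma 3.9]{E3} lets one \emph{modify} $F$ to a new flat $F'$ which agrees with $F$ except at finitely many vertices, where it differs by at most $1$ (adjacency of $F(\sigma_i)$ and $F'(\sigma_i)$ being forced by $6$--largeness applied to a $4$--cycle), and which passes through prescribed simplices of the $X$--directed geodesics. Then $F'\big|_{\Delta}$ \emph{is} a characteristic surface for the thick interval in $X$, so $F'(\rho_i)\subset\widetilde\delta_i$, and one gets $d(F(z_i),\widetilde z_i)\leqslant 1+1=2$ in the main case. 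Moreover the constant $10$ is not, as you suggest, absorbed by the boundary layers $i\in\{1,2,n-2,n-1\}$; it comes from the degenerate cases $2m\in\{0,2,n-2,n-1,n\}$, where one only knows that layers in $X$ have thickness at most $6$ and estimates $2+2+6=10$. Without a substitute for Elsner's modification lemma, your outline cannot be completed.
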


\begin{proof}Pick any two vertices $x,y \in \E$, let $(\delta_i)_{i=0}^m \subset \E$ be the Euclidean geodesic between $x$ and $y$ in $\E$, and let $(\widetilde{\delta}_i)_{i=0}^m \subset X$ be the Euclidean geodesic between $F(x)$ and $F(y)$ in $X$. To prove the lemma it is enough to prove the following claim.

\begin{claim0}For any $i \in \{0, 1, \ldots, m\}$ for any two vertices $z_i \in \delta_i $ and $\widetilde{z}_i \in \widetilde{\delta}_i$ we have \[d(F(z_i), \widetilde{z}_i) \leqslant 10.\] 
\end{claim0}

The rest of the argument is devoted to proving the claim. Let $(\sigma_i)_{i=0}^n$ and $(\tau_i)_{i=0}^n$ be directed geodesics in $\E$ going respectively from $x$ to $y$ and from $y$ to $x$, and let $(\widetilde{\sigma}_i)_{i=0}^n$ and $(\widetilde{\tau}_i)_{i=0}^n$ be the corresponding directed geodesics between $F(x)$ and $F(y)$ in $X$ (see Convention~\ref{conv:directionconvention}). As in Lemma~\ref{lem:cat0inflatisgood}, we will first deal with the case when there is a single thick interval for $(\sigma_i)_{i=0}^n$ and $(\tau_i)_{i=0}^n$, i.e.,\ with the notation from Lemma~\ref{lem:cat0inflatisgood}, for $(\sigma_i)_{i=0}^n$ we assume that $2<2m <n-2$.

Consider the thick interval $(2, n-2)$ and for every $i \in \{2, 3, \ldots, n-2 \}$ choose vertices $s_i \in \sigma_i$ and $t_i \in \tau_i$ that realise the thickness of layer $i$ in $\E$. Let $S \colon \Delta \to \E$ be a characteristic surface for the cycle \[\alpha= (s_2, s_3, \ldots, s_{n-2}, t_{n-2}, t_{n-1}, \ldots, t_2, s_2).\] Observe that $S$ is the unique characteristic surface for the interval $(2, n-2)$, and that it is an isometric embedding. Therefore we will identify the characteristic disk $\Delta$ with its image $S(\Delta) \subset \E$.

The map $F\big|_{\Delta} \colon \Delta \to X$ does not have to be a characteristic surface for geodesics $(\widetilde{\sigma}_i)_{i=0}^n$ and $(\widetilde{\tau}_i)_{i=0}^n$, e.g., not all of the vertices of $F(\alpha)$ belong to the appropriate simplices of  $(\widetilde{\sigma_i})_{i=0}^n$ and $(\widetilde{\tau}_i)_{i=0}^n$. A priori we do not even know whether $(2,n-2)$ is a thick interval for these geodesics.

We will now show how to modify $F$ to obtain a characteristic surface for $(\widetilde{\sigma}_i)_{i=0}^n$ and $(\widetilde{\tau}_i)_{i=0}^n$. The idea is that the images $F((\sigma_i)_{i=0}^n)$ and $F((\tau_i)_{i=0}^n)$) are $1$--close to the geodesics $(\widetilde{\sigma_i})_{i=0}^n$ and $(\widetilde{\tau}_i)_{i=0}^n$ respectively, and therefore a small perturbation of the map $F$ would give the desired characteristic surface. We will now make this idea precise. For every $i \in \{2,4,6, \ldots, 2m\}$ choose any vertex $u_i \in \widetilde{\sigma}_i$ and for every $i \in \{n-2m, n-2m+2,n-2m+4, \ldots, n-2 \}$ choose any vertex $w_i \in \widetilde{\tau}_i$. Then by \cite[Lemma 3.9]{E3} (applied simultaneously to geodesics $(\sigma_i)_{i=0}^n$ and $(\tau_i)_{i=0}^n$) there exists a flat $F' \colon \E \to X$ such that:

\begin{enumerate}

\item for $i \in \{2,4, \ldots, 2m\}$ we have $F'(\sigma_i)=u_i$,
\item for $i \in \{n-2m, n-2m+2, \ldots, n-2 \}$ we have $F'(\tau_i)=w_i$,
\item for every vertex $x \in \E$ not considered in (1) and (2), we have $F'(x)=F(x)$,
\item for every $0 \leqslant i \leqslant n$ we have $F'(\sigma_i)= \mathrm{Im}(F') \cap \widetilde{\sigma}_i$ and $F'(\tau_i)= \mathrm{Im}(F') \cap \widetilde{\tau}_i$.
\end{enumerate}
Images of $F$ and $F'$ are shown in Figure~\ref{fig:twoflats}.

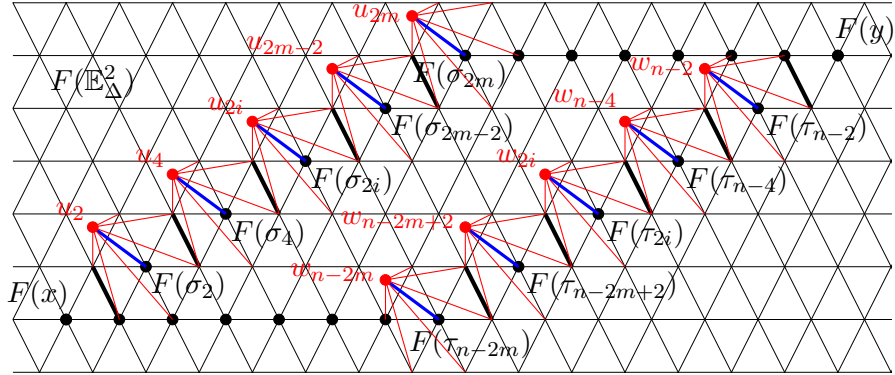
\begin{figure}[!h]
\centering
\begin{tikzpicture}[scale=0.7]
\begin{scope}
    \clip(-8,4) rectangle (8.5,-3);

\begin{scope}[color=black, very thin]
\draw (-8,0) to (10,0);
\draw (-8,1) to (10,1);
\draw (-8,2) to (10,2);
\draw (-8,3) to (10,3);
\draw (-8,4) to (10,4);
\draw (-8,5) to (10,5);
\draw (-8,6) to (10,6);
\draw (-8,-1) to (10,-1);
\draw (-8,-2) to (10,-2);
\draw (-8,-3) to (10,-3);
\draw (-8,-4) to (10,-4);
\draw(-8,4) to (-7,6);
\draw(-8,2) to (-6,6);
\draw(-8,0) to (-5,6);
\draw(-8,-2) to (-4,6);
\draw(-8,-4) to (-3,6);
\draw(-7,-4) to (-2,6);
\draw(-6,-4) to (-1,6);
\draw(-5,-4) to (0,6);
\draw(-4,-4) to (1,6);
\draw(-3,-4) to (2,6);
\draw(-2,-4) to (3,6);
\draw(-1,-4) to (4,6);
\draw(0,-4) to (5,6);
\draw(1,-4) to (6,6);
\draw(2,-4) to (7,6);
\draw(3,-4) to (8,6);
\draw(4,-4) to (9,6);
\draw(5,-4) to (10,6);
\draw(6,-4) to (10,4);
\draw(7,-4) to (10,2);
\draw(8,-4) to (10,0);
\draw(9,-4) to (10,-2);

\begin{scope}[yscale=-1,xscale=1, shift={(0,-2)}]
\draw(-8,4) to (-7,6);
\draw(-8,2) to (-6,6);
\draw(-8,0) to (-5,6);
\draw(-8,-2) to (-4,6);
\draw(-8,-4) to (-3,6);
\draw(-7,-4) to (-2,6);
\draw(-6,-4) to (-1,6);
\draw(-5,-4) to (0,6);
\draw(-4,-4) to (1,6);
\draw(-3,-4) to (2,6);
\draw(-2,-4) to (3,6);
\draw(-1,-4) to (4,6);
\draw(0,-4) to (5,6);
\draw(1,-4) to (6,6);
\draw(2,-4) to (7,6);
\draw(3,-4) to (8,6);
\draw(4,-4) to (9,6);
\draw(5,-4) to (10,6);
\draw(6,-4) to (10,4);
\draw(7,-4) to (10,2);
\draw(8,-4) to (10,0);
\draw(9,-4) to (10,-2);
\end{scope}
\end{scope}

\end{scope}

\draw[fill] (-7,-2)   circle [radius=0.1];
\draw[ultra thick] (-6, -2) --(-6.5, -1);

\begin{scope}[shift={(1.5,1)}]
\draw[fill] (-7,-2)   circle [radius=0.1];
\draw[ultra thick] (-6, -2) --(-6.5, -1);
\end{scope}
\begin{scope}[shift={(3,2)}]
\draw[fill] (-7,-2)   circle [radius=0.1];
\draw[ultra thick] (-6, -2) --(-6.5, -1);
\end{scope}
\begin{scope}[shift={(4.5,3)}]
\draw[fill] (-7,-2)   circle [radius=0.1];
\draw[ultra thick] (-6, -2) --(-6.5, -1);
\end{scope}
\begin{scope}[shift={(6,4)}]
\draw[fill] (-7,-2)   circle [radius=0.1];
\draw[ultra thick] (-6, -2) --(-6.5, -1);
\end{scope}

\draw[fill] (0.5,3)   circle [radius=0.1];
\draw[fill] (1.5,3)   circle [radius=0.1];
\draw[fill] (2.5,3)   circle [radius=0.1];
\draw[fill] (3.5,3)   circle [radius=0.1];
\draw[fill] (4.5,3)   circle [radius=0.1];
\draw[fill] (5.5,3)   circle [radius=0.1];
\draw[fill] (6.5,3)   circle [radius=0.1];
\draw[fill] (7.5,3)   circle [radius=0.1];


\node  [above left] at (-6.75,-2) {$F(x)$};
\node  [above right] at (7.25,3) {$F(y)$};

\node  [below right] at (-5.6,-0.9) {$F(\sigma_2)$};
\node  [below right] at (-5.6+1.5,-0.9+1) {$F(\sigma_4)$};
\node  [below] at (-5.6+6,-0.9+4) {$F(\sigma_{2m})$};
\node  [below right] at (-5.6+3,-0.9+2) {$F(\sigma_{2i})$};
\node  [below right] at (-5.6+4.5,-0.9+3) {$F(\sigma_{2m-2})$};

\begin{scope}[shift={(0.5,1)}, rotate=180]
\draw[fill] (-7,-2)   circle [radius=0.1];
\draw[ultra thick] (-6, -2) --(-6.5, -1);

\begin{scope}[shift={(1.5,1)}]
\draw[fill] (-7,-2)   circle [radius=0.1];
\draw[ultra thick] (-6, -2) --(-6.5, -1);
\end{scope}
\begin{scope}[shift={(3,2)}]
\draw[fill] (-7,-2)   circle [radius=0.1];
\draw[ultra thick] (-6, -2) --(-6.5, -1);
\end{scope}
\begin{scope}[shift={(4.5,3)}]
\draw[fill] (-7,-2)   circle [radius=0.1];
\draw[ultra thick] (-6, -2) --(-6.5, -1);
\end{scope}
\begin{scope}[shift={(6,4)}]
\draw[fill] (-7,-2)   circle [radius=0.1];
\draw[ultra thick] (-6, -2) --(-6.5, -1);
\end{scope}

\draw[fill] (0.5,3)   circle [radius=0.1];
\draw[fill] (1.5,3)   circle [radius=0.1];
\draw[fill] (2.5,3)   circle [radius=0.1];
\draw[fill] (3.5,3)   circle [radius=0.1];
\draw[fill] (4.5,3)   circle [radius=0.1];
\draw[fill] (5.5,3)   circle [radius=0.1];
\draw[fill] (6.5,3)   circle [radius=0.1];
\draw[fill] (7.5,3)   circle [radius=0.1];
\end{scope}

\node  [below right] at (-0.75,-2) {$F(\tau_{n-2m})$};
\node  [below right] at (1.45,-0.9) {$F(\tau_{n-2m+2})$};
\node  [below right] at (1.45+1.5,-0.9+1) {$F(\tau_{2i})$};
\node  [below right] at (1.45+4.5,-0.9+3) {$F(\tau_{n-2})$};
\node  [below right] at (1.45+3,-0.9+2) {$F(\tau_{n-4})$};


 

\draw[very thick, blue] (-6.5,-0.25)--(-5.5,-1);
\begin{scope}[red]
\draw[fill] (-6.5,-0.25) circle [radius=0.1];

\draw(-6.5,-0.25)-- (-6.5,-1);
\draw (-6.5,-0.25)-- (-4.5,-1);

\draw (-6.5,-0.25)-- (-5,0);
\draw  (-6.5,-0.25)-- (-6,0);
\draw  (-6.5,-0.25)-- (-5,-2);
\draw  (-6.5,-0.25)-- (-6,-2);
\node [above left] at (-6.45, -0.3) {$u_2$};
\end{scope}

\begin{scope}[shift={(1.5,1)}]
\draw[very thick, blue] (-6.5,-0.25)--(-5.5,-1);
\begin{scope}[red]
\draw[fill] (-6.5,-0.25) circle [radius=0.1];

\draw(-6.5,-0.25)-- (-6.5,-1);
\draw (-6.5,-0.25)-- (-4.5,-1);

\draw (-6.5,-0.25)-- (-5,0);
\draw  (-6.5,-0.25)-- (-6,0);
\draw  (-6.5,-0.25)-- (-5,-2);
\draw  (-6.5,-0.25)-- (-6,-2);
\node [above left] at (-6.45, -0.3) {$u_4$};
\end{scope}
\end{scope}

\begin{scope}[shift={(3,2)}]
\draw[very thick, blue] (-6.5,-0.25)--(-5.5,-1);
\begin{scope}[red]
\draw[fill] (-6.5,-0.25) circle [radius=0.1];

\draw(-6.5,-0.25)-- (-6.5,-1);
\draw (-6.5,-0.25)-- (-4.5,-1);

\draw (-6.5,-0.25)-- (-5,0);
\draw  (-6.5,-0.25)-- (-6,0);
\draw  (-6.5,-0.25)-- (-5,-2);
\draw  (-6.5,-0.25)-- (-6,-2);
\node [above left] at (-6.45, -0.3) {$u_{2i}$};
\end{scope}
\end{scope}

\begin{scope}[shift={(4.5,3)}]
\draw[very thick, blue] (-6.5,-0.25)--(-5.5,-1);
\begin{scope}[red]
\draw[fill] (-6.5,-0.25) circle [radius=0.1];

\draw(-6.5,-0.25)-- (-6.5,-1);
\draw (-6.5,-0.25)-- (-4.5,-1);

\draw (-6.5,-0.25)-- (-5,0);
\draw  (-6.5,-0.25)-- (-6,0);
\draw  (-6.5,-0.25)-- (-5,-2);
\draw  (-6.5,-0.25)-- (-6,-2);
\node [above left] at (-6.45, -0.2) {$u_{2m-2}$};
\end{scope}
\end{scope}

\begin{scope}[shift={(6,4)}]
\draw[very thick, blue] (-6.5,-0.25)--(-5.5,-1);
\begin{scope}[red]
\draw[fill] (-6.5,-0.25) circle [radius=0.1];

\draw(-6.5,-0.25)-- (-6.5,-1);
\draw (-6.5,-0.25)-- (-4.5,-1);

\draw (-6.5,-0.25)-- (-5,0);
\draw  (-6.5,-0.25)-- (-6,0);
\draw  (-6.5,-0.25)-- (-5,-2);
\draw  (-6.5,-0.25)-- (-6,-2);
\node [above left] at (-6.5, -0.55) {$u_{2m}$};
\end{scope}
\end{scope}


\begin{scope}[shift={(5.5,-1)}]

\draw[very thick, blue] (-6.5,-0.25)--(-5.5,-1);
\begin{scope}[red]
\draw[fill] (-6.5,-0.25) circle [radius=0.1];

\draw(-6.5,-0.25)-- (-6.5,-1);
\draw (-6.5,-0.25)-- (-4.5,-1);

\draw (-6.5,-0.25)-- (-5,0);
\draw  (-6.5,-0.25)-- (-6,0);
\draw  (-6.5,-0.25)-- (-5,-2);
\draw  (-6.5,-0.25)-- (-6,-2);
\node [above left] at (-6.5, -0.5) {$w_{n-2m}$};
\end{scope}

\begin{scope}[shift={(1.5,1)}]
\draw[very thick, blue] (-6.5,-0.25)--(-5.5,-1);
\begin{scope}[red]
\draw[fill] (-6.5,-0.25) circle [radius=0.1];

\draw(-6.5,-0.25)-- (-6.5,-1);
\draw (-6.5,-0.25)-- (-4.5,-1);

\draw (-6.5,-0.25)-- (-5,0);
\draw  (-6.5,-0.25)-- (-6,0);
\draw  (-6.5,-0.25)-- (-5,-2);
\draw  (-6.5,-0.25)-- (-6,-2);
\node [above left] at (-6.5, -0.5) {$w_{n-2m+2}$};
\end{scope}
\end{scope}

\begin{scope}[shift={(3,2)}]
\draw[very thick, blue] (-6.5,-0.25)--(-5.5,-1);
\begin{scope}[red]
\draw[fill] (-6.5,-0.25) circle [radius=0.1];

\draw(-6.5,-0.25)-- (-6.5,-1);
\draw (-6.5,-0.25)-- (-4.5,-1);

\draw (-6.5,-0.25)-- (-5,0);
\draw  (-6.5,-0.25)-- (-6,0);
\draw  (-6.5,-0.25)-- (-5,-2);
\draw  (-6.5,-0.25)-- (-6,-2);
\node [above left] at (-6.45, -0.3) {$w_{2i}$};
\end{scope}
\end{scope}

\begin{scope}[shift={(4.5,3)}]
\draw[very thick, blue] (-6.5,-0.25)--(-5.5,-1);
\begin{scope}[red]
\draw[fill] (-6.5,-0.25) circle [radius=0.1];

\draw(-6.5,-0.25)-- (-6.5,-1);
\draw (-6.5,-0.25)-- (-4.5,-1);

\draw (-6.5,-0.25)-- (-5,0);
\draw  (-6.5,-0.25)-- (-6,0);
\draw  (-6.5,-0.25)-- (-5,-2);
\draw  (-6.5,-0.25)-- (-6,-2);
\node [above left] at (-6.45, -0.2) {$w_{n-4}$};
\end{scope}
\end{scope}

\begin{scope}[shift={(6,4)}]
\draw[very thick, blue] (-6.5,-0.25)--(-5.5,-1);
\begin{scope}[red]
\draw[fill] (-6.5,-0.25) circle [radius=0.1];

\draw(-6.5,-0.25)-- (-6.5,-1);
\draw (-6.5,-0.25)-- (-4.5,-1);

\draw (-6.5,-0.25)-- (-5,0);
\draw  (-6.5,-0.25)-- (-6,0);
\draw  (-6.5,-0.25)-- (-5,-2);
\draw  (-6.5,-0.25)-- (-6,-2);
\node [above left] at (-6.5, -0.6) {$w_{n-2}$};
\end{scope}
\end{scope}

\end{scope}

\node [above left] at (-5.5, 2) {$F(\E)$};

\end{tikzpicture}
\caption{Images of flats $F$ and $F'$. The part of $F'(\E)$ which differs from $F(\E)$ is red.}
\label{fig:twoflats}
\end{figure}

Observe that for $i \in \{2,4, \ldots, 2m\}$ vertices $F(\sigma_i)$ and $F'(\sigma_i)=u_i$ are connected. Indeed, since $F$ and $F'$ agree on all the neighbours of $\sigma_i$ in $\E$, taking a pair of neighbours $v_1, v_2 \in \E$ of $\sigma_i$ that are not adjacent gives a $4$--cycle \[(F(v_1), F(\sigma_i), F(v_2), F'(\sigma_i)).\] Since $X$ is $6$--large, this cycle has a diagonal. It cannot be $[F(v_1), F(v_2)]$ since $F$ is an isometric embedding, and $v_1$ and $v_2$ are not adjacent. Thus it must be $[F(\sigma_i), F'(\sigma_i)]$.

Analogously, for any $i \in \{n-2m, n-2m+2, \ldots, n-2 \}$ vertices $ F(\tau_i)$ and $F'(\tau_i)=w_i$ are connected.
The above assertions, together with property (3) of the map $F'$, imply that for any vertex $x \in \E$ we have \[d(F(x), F'(x)) \leqslant 1.\]

We claim that for any $2 \leqslant i \leqslant n-2 $ the vertices $F'(s_i) \in \widetilde{\sigma}_i$ and $ F'(t_i) \in \widetilde{\tau}_i$ realise the thickness of layer $i$ for $(\widetilde{\sigma}_i)_{i=0}^n$ and $(\widetilde{\tau}_i)_{i=0}^n$ in $X$, and so layers $L_3,L_4, \ldots, L_{n-3}$ are thick, and layers $L_2$ and $L_{n-2}$ are thin. This follows essentially from the fact that $F' \colon \E \to X$ is an isometric embedding. Similarly, one can show that layers $L_1$ and $L_{n-1}$ in $X$ are thin. Finally, we conclude that \[F'\big|_{\Delta} \colon \Delta \to X\] is a characteristic surface for the thick interval $(2,n-2$) for geodesics $(\widetilde{\sigma}_i)_{i=0}^n$ and $(\widetilde{\tau}_i)_{i=0}^n$ in $X$. Let $(\rho_i)_{i=3}^{n-3}$ be the Euclidean diagonal for $\Delta \subset \E$ and 
denote by $(\delta_i)_{i=0}^m$ the Euclidean geodesic between vertices $x$ and $y$ in $\E$. We have $\delta_i=\mathrm{span}\{\sigma_i, \tau_i\}$ for $i \in \{1,2,n-2,n-1\}$. For all remaining $i$ we have $\delta_i=\rho_i$ (since $\Delta \cong S(\Delta) \subset \E$ is the unique characteristic surface for the thick interval $(2,n-2)$).
For any $i \in \{0, 1, \ldots, n\}$, for any vertex $z_i\in \delta_i$ we have 
\begin{equation}\label{eq:equiflats} 
d(F(z_i), F'(z_i)) \leqslant 1.
\end{equation}
For any $i \in \{0, 1, \ldots, n\}$, for any vertices $z_i \in \delta_i$ and $\widetilde{z}_i \in \widetilde{\delta}_i$ we claim that 
\begin{equation}\label{eq:charsurface} 
d(F'(z_i), \widetilde{z}_i) \leqslant 1.
\end{equation}

This follows for $i \in \{1,2,n-2,n-1\}$ from the property (4) of the map $F' \colon \E \to X$. Namely we have that $F'(\sigma_i)\subset \widetilde{\sigma}_i$ and $F'(\tau_i)\subset \widetilde{\tau}_i$, and by definition $\delta_i=\mathrm{span}\{\sigma_i, \tau_i\}$ and $\widetilde{\delta}_i=\mathrm{span}\{\widetilde{\sigma}_i, \widetilde{\tau}_i\}$.
For $i\in \{3, 4, \ldots, n-3 \}$ by definition of a Euclidean geodesic and the fact that $F'\big|_{\Delta}$ is a characteristic surface for $(\widetilde{\sigma}_i)_{i=0}^n$ and $(\widetilde{\tau}_i)_{i=0}^n$ we obtain that $F'(\rho_i) \subset \widetilde{\delta}_i$. 

Finally, combining \eqref{eq:equiflats} and \eqref{eq:charsurface}, for any $i \in \{0, 1, \ldots, n\}$ for any two vertices $z_i \in \delta_i$ and $\widetilde{z}_i \in \widetilde{\delta}_i$ we have
\begin{equation*}
d(F(z_i), \widetilde{z}_i) \leqslant 2.
\end{equation*}
This finishes the proof of the claim under the assumption that $2<2m <n-2$.\medskip

Now assume $2m \in \{0, 2, n-2, n-1,n\}$. In this case any layer $L_i \subset \E$ has thickness at most $2$. By \cite[Proposition 3.8]{E3} (which is a weaker formulation of \cite[Lemma 3.9]{E3} used above) for any vertices $s_i \in \sigma_i$ and  $u_i \in \widetilde{\sigma}_i$ we have $d(F(s_i), u_i) \leqslant 2$. The same estimate holds for vertices of $\tau_i$ and $\widetilde{\tau}_i$. It follows from the triangle inequality, that for any $i\in \{0, 1, \ldots, n\}$ the thickness of the layer $i$ in $X$ is at most $6$.

Observe that by definition of the Euclidean geodesic, any simplex $\delta_i$  lies between simplices $\sigma_i$ and $\tau_i$ in the layer $L_i \subset \E$. More precisely, the distance between any vertex $z_i \in \delta_i$ and any vertex $u_i \in \sigma_i$ is less than the thickness of $L_i$. Clearly the same estimate holds for vertices of $ \widetilde{\delta}_i$ and $\widetilde{\sigma}_i$, if one replaces thickness of $L_i$ by thickness of layer $i$ in $X$. From these considerations we conclude that for any $i\in \{0, 1, \ldots, n\}$, for any $z_i \in \delta_i$ and $\widetilde{z}_i \in \widetilde{\delta}_i$ and for any vertices $s_i \in \sigma_i$ and $u_i \in \widetilde{\sigma_i}$ we have:
\[d(F(z_i), \widetilde{z}_i) \leqslant d(F(z_i), F(s_i)) + d(F(s_i),u_i)+d(u_i, \widetilde{z}_i) \leqslant 2 +2+ 6  =10.\]
This estimate is by no means optimal.
\end{proof}

We are ready now to prove Theorem~\ref{tw:fixedpoints}.

\begin{proof}[Proof of Theorem~\ref{tw:fixedpoints}] 

\textbf{Case 1: $\minset{h}$ is $h$--cocompact.} Let $K$ be the constant appearing in Lemma~\ref{lem:euclideanisclosetomin}. Since $\minset{h}$ is $h$--cocompact, by Lemma~\ref{lem:disp} the subcomplex ${\disp{K}{h}}$ is $h$--cocompact as well. Pick a vertex $x \in \minset{h} \subset \disp{K}{h}$, and for any $n \geqslant 0$ consider vertices $h^{-n} \cdot x, h^{n} \cdot x \in \minset{h}$. Note that $d(h^{-n} \cdot x, h^{n} \cdot x)$ is not necessarily equal to $2n \cdot \trol{h}$, but we can assume that it is even (by passing to a subsequence of the form $n_i=ik$ for some $k \geqslant 1$ if necessary, see \cite[Theorem 1.1]{E2}). Put $m_n= \frac{1}{2} \cdot d(h^{-n} \cdot x, h^{n} \cdot x)$ and let $(\delta^n_i)_{i=-m_n}^{m_n}$ be the Euclidean geodesic between $h^{-n} \cdot x $ and $h^{n} \cdot x$.
By Lemma~\ref{lem:euclideanisclosetomin} we have $(\delta^n_i)_{i=-m_n}^{m_n} \subset \disp{K}{h}.$ Since $\disp{K}{h}$ is $h$--cocompact, there exists $R>0$ such that for every $n$ the geodesic $(\delta^n_i)_{i=-m_n}^{m_n}$ intersects the ball $B_R(x,X)$.  Let $i_n$ be an integer such that $\delta^n_{i_n}$ is a simplex of $(\delta^n_i)_{i=-m_n}^{m_n}$ that intersects $B_R(x,X)$ (such $i_n$ is not unique in general, we choose one for each $n$). 
By replacing $R$ with $R+1$ we can assume that $\delta^n_{i_n} \subset B_R(x,X)$.

Since the ball $B_R(x,X)$ contains only finitely many simplices, there are infinitely many $n$ such that $\delta^n_{i_n}$ is equal to a fixed simplex of $B_R(x,X)$. Denote this simplex by $\widetilde{\delta}_0$. Now since the sphere $S_1(\widetilde{\delta_0}, X)$ is finite, among geodesics $(\delta^n_i)_{i=-m_n}^{m_n}$ for which $\delta^n_{i_n} =\widetilde{\delta}_0$ there are infinitely many such that $\delta^n_{i_n+1}$ is equal to a fixed simplex $\widetilde{\delta}_{1}$ and $\delta^n_{i_n-1}$ is equal to a fixed simplex $\widetilde{\delta}_{-1}$. By continuing this procedure for spheres $S_k(\widetilde{\delta_0}, X)$ for $k>1$, we obtain a bi-infinite sequence of simplices \[(\widetilde{\delta}_i)_{i=-\infty}^{\infty} \subset \disp{K}{h},\] such that any of its finite subsequences is a Euclidean geodesic. 

By Proposition~\ref{prop:euclideanisgeo} for any finite subsequence, say $(\widetilde{\delta}_i)_{i=-m}^{m}$, there exists a geodesic $\gamma_m=(v_i)_{i=-m}^m$ such that $v_i\in \widetilde{\delta}_i$. By Theorem~\ref{tw:euclideanisgood} any $\gamma_m$ is a good geodesic. By a diagonal argument, from the sequence $(\gamma_m)_{m=0}^{\infty}$ we can extract a bi-infinite geodesic $\gamma=(v_i)_{i=-\infty}^{\infty}$, which is a good geodesic, as any of its finite subgeodesics is contained in a good geodesic $\gamma_m$ for some $m>0$. Since for every $i \in \mathbb{N}$ we have $v_i \in \widetilde{\delta}_i  \subset \disp{K}{h}$, we conclude that $\gamma \subset \disp{K}{h}$.\medskip 

\textbf{Case 2. $\minset{h}$ is not $h$--cocompact.} By \cite[Corollary 5.8]{OsaPry} the centraliser $C_G(h)$ is commensurable with the product $F_n \times \mathbb{Z}$, such that the subgroup $\langle h \rangle \subset C_G(h)$ is commensurable with the `$\mathbb{Z}$' factor of the latter. By Theorem~\ref{tw:centraliser} the group $C_G(h)$ acts cocompactly on $\minset{h}$. Since $\minset{h}$ is not $h$--cocompact, we conclude that $n \geqslant 1$, and so there exists an element $g \in C_G(h)$ such that $\langle g, h \rangle \cong \mathbb{Z}^2$. By the Flat Torus Theorem (\cite[Theorem 6.1]{E1}) there exists a flat $F \colon \E \to X$ whose image is preserved by the action of $\langle g, h \rangle$. We will now construct an $h$--invariant geodesic $\gamma \subset F(\E)$ which satisfies the assumptions of Lemma~\ref{lem:cat0inflatisgood}. 

Take any vertex $x \in F(\E)$ and consider a $\mathrm{CAT}(0)$ geodesic $\gamma'$ in $F(\E)$ that passes through vertices $x$ and $h \cdot x$. The isometry $h$ acts on $F(\E) \cong \mathbb{E}^2$ as a translation along $\gamma'$ by distance equal to the $\mathrm{CAT}(0)$ length of segment $\gamma'\big|_{[x, h \cdot x]}$. Let $\alpha$ be any (combinatorial) geodesic between $x$ and $h \cdot x$ that is Hausdorff $1$--close to $\gamma'\big|_{[x, h \cdot x]}$. (To obtain such $\alpha$ one proceeds similarly as when defining the Euclidean diagonal in a characteristic disk in Subsection~\ref{subsec:euclideandiag}.) Define $\gamma$ as \[\gamma= \bigcup_{n \in \mathbb{Z}} (h^n \cdot \alpha).\]

By definition $\gamma$ is an $h$--invariant geodesic, that is Hausdorff $1$--close to a $\mathrm{CAT}(0)$ geodesic $\gamma'$ in $F(\E)$. By Lemma~\ref{lem:cat0inflatisgood} we get that $\gamma$ is a $(\frac{4}{\sqrt{3}}+1)$--good geodesic in $F(\E)$. Lemma~\ref{lem:goodinflatisgood} implies that $\gamma$ is a $(\frac{4}{\sqrt{3}}+11)$--good geodesic in $X$. This implies that $\gamma$ is a good geodesic in $X$ since we have $\frac{4}{\sqrt{3}}+11<C$, where $C$ is the constant appearing in Definition~\ref{def:goodgeo} (cf.\ the discussion at the beginning of Subsection~\ref{subsec:goodgeo}).
\end{proof}

\begin{bibdiv}

\begin{biblist}


\bib{Ch4}{article}{
   author={Chepoi, Victor},
   title={Graphs of some ${\rm CAT}(0)$ complexes},
   journal={Adv. in Appl. Math.},
   volume={24},
   date={2000},
   number={2},
   pages={125--179},
   issn={0196-8858},
   review={\MR{1748966 (2001a:57004)}},
}


\bib{E1}{article}{
   author={Elsner, Tomasz},
   title={Flats and the flat torus theorem in systolic spaces},
   journal={Geom. Topol.},
   volume={13},
   date={2009},
   number={2},
   pages={661--698},
   issn={1465-3060},
   review={\MR{2469526 (2009m:20065)}},
   doi={10.2140/gt.2009.13.661},
}

\bib{E2}{article}{
   author={Elsner, Tomasz},
   title={Isometries of systolic spaces},
   journal={Fund. Math.},
   volume={204},
   date={2009},
   number={1},
   pages={39--55},
   issn={0016-2736},
   review={\MR{2507689 (2010g:51005)}},
   doi={10.4064/fm204-1-3},
}

\bib{E3}{article}{
    title     ={Systolic spaces with isolated flats},
    author    ={Elsner, Tomasz},
    status    ={preprint},
    date      ={2009},
    eprint    ={http://www.math.uni.wroc.pl/~elsner/papers/isolated-flats.pdf},
}


\bib{JS2}{article}{
   author={Januszkiewicz, Tadeusz},
   author={{\'S}wi{\c{a}}tkowski, Jacek},
   title={Simplicial nonpositive curvature},
   journal={Publ. Math. Inst. Hautes \'Etudes Sci.},
   number={104},
   date={2006},
   pages={1--85},
   issn={0073-8301},
   review={\MR{2264834 (2007j:53044)}},
}

\bib{Ont}{article}{
   author={Ontaneda, Pedro},
   title={Cocompact CAT(0) spaces are almost geodesically complete},
   journal={Topology},
   volume={44},
   date={2005},
   number={1},
   pages={47--62},
   issn={0040-9383},
   review={\MR{2104000}},
   doi={10.1016/j.top.2004.01.010},
}

\bib{Osa}{article}{
   author={Osajda, Damian},
   title={Connectedness at infinity of systolic complexes and groups},
   journal={Groups Geom. Dyn.},
   volume={1},
   date={2007},
   number={2},
   pages={183--203},
   issn={1661-7207},
   review={\MR{2319456}},
   doi={10.4171/GGD/9},
}

\bib{OsaPry}{article}{
    title     ={Classifying spaces for families of subgroups for systolic groups},
    author    ={Osajda,Damian},
    author    ={Prytu{\l}a, Tomasz},
    status    ={preprint},
    date      ={2016},
    eprint    ={https://arxiv.org/abs/1604.08478},
}

\bib{OP}{article}{
   author={Osajda, Damian},
   author={Przytycki, Piotr},
   title={Boundaries of systolic groups},
   journal={Geom. Topol.},
   volume={13},
   date={2009},
   number={5},
   pages={2807--2880},
   issn={1465-3060},
   review={\MR{2546621}},
   doi={10.2140/gt.2009.13.2807},

}

\bib{PaSw}{article}{
   author={Papasoglu, Panos},
   author={Swenson, Eric},
   title={Boundaries and JSJ decompositions of CAT(0)-groups},
   journal={Geom. Funct. Anal.},
   volume={19},
   date={2009},
   number={2},
   pages={559--590},
   issn={1016-443X},
   review={\MR{2545250}},
   doi={10.1007/s00039-009-0012-8},
}

\bib{Kru}{article}{
   author={Ruane, Kim E.},
   title={Dynamics of the action of a ${\rm CAT}(0)$ group on the boundary},
   journal={Geom. Dedicata},
   volume={84},
   date={2001},
   number={1-3},
   pages={81--99},
   issn={0046-5755},
   review={\MR{1825346}},
   doi={10.1023/A:1010301824765},
}

\bib{SC}{article}{
   author={Soltan, V. P.},
   author={Chepo{\u\i}, V. D.},
   title={Conditions for invariance of set diameters under
   $d$-convexification in a graph},
   language={Russian, with English summary},
   journal={Kibernetika (Kiev)},
   date={1983},
   number={6},
   pages={14--18},
   issn={0023-1274},
   translation={
      journal={Cybernetics},
      volume={19},
      date={1983},
      number={6},
      pages={750--756 (1984)},
      issn={0011-4235},
   },
   review={\MR{765117 (86k:05102)}},
   doi={10.1007/BF01068561},
}



\end{biblist}
\end{bibdiv}
\end{document}